\newtheorem{thm}{Theorem}[section]
\newtheorem{cor}[thm]{Corollary}
\newtheorem{lemma}[thm]{Lemma}
\newtheorem{hyp}{Hypothesis}
\theoremstyle{definition}
\newtheorem{defn}[thm]{Definition}
\theoremstyle{remark}
\newtheorem{rem}[thm]{Remark}
\newtheorem{example}[thm]{Example}
\numberwithin{equation}{section}
\renewcommand{\(}{\langle\mskip2mu\relax}
\renewcommand{\)}{\mskip2mu \rangle}
\newcommand{\pminus}{\phantom{-}}
\newcommand{\zwidth}[2][0pt]{\hbox to #1{\hss#2\hss}}
\DeclareMathOperator{\GL}{GL}
\DeclareMathOperator{\SL}{SL}
\DeclareMathOperator{\Sym}{Sym}
\DeclareMathOperator{\Fix}{Fix}
\newcommand{\Q}{\mathbb{Q}}
\newcommand{\C}{\mathbb{C}}
\newcommand{\Z}{\mathbb{Z}}
\newcommand{\rou}{\boldsymbol\mu}
\newcommand{\Magma}{\textsf{Magma}}
\let\sdprod=\rtimes
\begin{document}
\begin{frontmatter}
\title{Normalisers of parabolic subgroups in finite unitary reflection groups}

\author{Muraleedaran Krishnasamy}
\ead{kmuraleedaran@sydney.edu.au}
\author{D.~E.~Taylor}
\ead{donald.taylor@sydney.edu.au}

\address{School of Mathematics and Statistics\\
The University of Sydney\\
NSW 2006, Australia}

\begin{abstract}
It is well known that the normaliser of a parabolic subgroup of a finite
Coxeter group is the semidirect product of the parabolic subgroup by the
stabiliser of a set of simple roots.  We show that a similar result holds
for all finite unitary reflection groups: namely, the normaliser of a
parabolic subgroup is a semidirect product and in many, but not all, cases
the complement can be obtained as the stabiliser of a set of roots. In
addition we record when the complement acts as a reflection group on the
space of fixed points of the parabolic subgroup.

\begin{keyword}%
unitary reflection group\sep complex reflection group\sep parabolic
subgroup\sep normaliser
\par\leavevmode\emph{2010 Mathematics subject classification:} 20F55
\end{keyword}
\end{abstract}

\end{frontmatter}

\section{Introduction}\label{sec:intro}
Let $V$ be a finite-dimensional vector space over the complex numbers $\C$
and suppose that $V$ is equipped with a positive definite hermitian form
$(-,-)$. A \emph{reflection} is a linear transformation of $V$ of finite
order whose fixed point space is a hyperplane. A \emph{unitary reflection
group} on $V$ is a group of linear transformations of $V$ generated by
reflections which preserve $(-,-)$.

In general a reflection of order $m$ has the form
\[
   r_{a,\zeta}(v) = v - (1-\zeta)\frac{(v,a)}{(a,a)} a,
\]
where $a$ is a non-zero element of $V$ called a \emph{root} of $r_{a,\zeta}$
and $\zeta\in\C$ is a primitive $m$th root of unity.

A subgroup of $G$ is a \emph{reflection subgroup} if it is generated by its
reflections. A \emph{parabolic subgroup} of $G$ is the pointwise stabiliser
$G(X)$ of a subset of $X$ of $V$; by a theorem of Steinberg
\cite{steinberg:1964} $G(X)$ is a reflection subgroup. Given a parabolic
subgroup $P = G(X)$, let $U = \Fix_V(P)$ be the space of fixed points of $P$.
Then $G(U) \subseteq G(X) = P\subseteq G(U)$ whence $P = G(U)$. A
straightforward calculation shows that the normaliser $N_G(P)$ of $P$ in $G$
is $G_U$, the setwise stabiliser of $U$. Similarly, $N_G(P) = G_{U^\perp}$,
where $U^\perp$ is the orthogonal complement of $U$ in~$V$.

\vfill
\pagebreak
Given a parabolic subgroup $P$ of $G$, it is the purpose of this
paper---extending the work in \cite{muralee:2005}---to describe the structure
of the normaliser $N_G(P)$. In particular, we show that in all cases there
exists a complement $H$ to $P$ in its normaliser; that is, $N_G(P)$
is a semidirect product $P\sdprod H$. For brevity we refer to $H$ as a 
\emph{parabolic complement}.

If $G$ is imprimitive, a parabolic complement is constructed using the methods
of \cite{taylor:2012}. If $G$ is primitive, there is almost always a set of
roots of reflections which generate $P$ and whose stabiliser is a complement;
the exceptions are certain parabolic subgroups of the Shephard and Todd
groups $G_{29}$ and $G_{31}$ and the parabolic subgroups of rank 1 excluded
by Lemma \ref{lemma:stab}.

In most cases the existence of a parabolic complement is obtained without resort
to computer-assisted calculations.  However, for 17 of the 212 conjugacy classes
of parabolic subgroups of the primitive reflection groups a specific but
somewhat ad hoc choice of roots is given in Table~\ref{tbl:eis}.  The
choice of roots and the verification that their setwise stabiliser is a
parabolic complement was carried out using \Magma\ \cite{magma:1997} code.  

The \Magma\ code is part of a general
package\footnote{\url{http://www.maths.usyd.edu.au/u/don/details.html#programs}}
which was written to facilitate further exploration and provide some of the
information in the tables of \S\ref{sec:tab}.  The root systems and 
generating reflections used by \Magma\ satisfy the defining relations
corresponding to the diagrams in 
\cite{broue-malle-rouquier:1998}.  For unitary reflection groups other 
choices of generators have been proposed 
\cite{basak:2012,bessis-bonnafe-rouquier:2002,malle-michel:2010} but 
except for the parabolic complements described in Table~\ref{tbl:eis}
our results do not depend on the particular choice of generators.

Throughout the paper $H^\circ$ denotes the subgroup of $H$ generated by the
elements that act as reflections on $\Fix_V(P)$ and $Q$ is the parabolic
subgroup $G(U^\perp)$.

The complexification of a real reflection group is a Coxeter group and in
this case the structure of the normaliser of a parabolic subgroup is well
known \cite{brink-howlett:1999,howlett:1980}. Standard facts about unitary
reflection groups can be found in \cite{kane:2001} and
\cite{lehrer-taylor:2009}.

\section{Reduction to the irreducible case}\label{sec:reduction}
Suppose that $G$ is a finite unitary reflection group on the vector space $V$
and that $G$ preserves the positive definite hermitian form $(-,-)$. From
\cite[Theorem~1.27]{lehrer-taylor:2009} $V$ is the direct sum of pairwise
orthogonal $G$-invariant subspaces $V_1$, $V_2$, \dots,~$V_k$ such that the
restriction of $G$ to $V_i$ is an irreducible reflection group  $G_i$ on
$V_i$ and $G\simeq G_1\times G_2\times\dots \times G_k$.

\begin{thm}
Suppose that $P$ is a parabolic subgroup of $G$,
where $G = G_1\times G_2\times\dots \times G_k$, as above. For $1\le i\le k$,
let $U_i = \Fix_{V_i}(P)$ and $P_i = G_i(U_i)$. Then
\begin{enumerate}[(i)]
\item $\Fix_V(P) = U_1\oplus U_2\oplus \dots\oplus U_k$;
\item $P = P_1\times P_2\times \dots \times P_k$;
\item $P_i$ is the parabolic subgroup of $G_i$ generated by the reflections
    in $P$ whose roots belong to $V_i$;
\item $N_G(P) = N_{G_1}(P_1)\times N_{G_2}(P_2)\times\dots \times
    N_{G_k}(P_k)$;
\item if $H_i$ is a complement to $P_i$ in $N_G(P_i)$ for $1\le i\le k$,
then $H_1\times H_2\times\dots\times H_k$ is a complement to $P$ in~$N_G(P)$.
\end{enumerate}
\end{thm}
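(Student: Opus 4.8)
The plan is to establish the five assertions in order, deriving each from its predecessors, with the only substantial input being Steinberg's theorem applied inside each irreducible factor. The foundational observation I would record first is that every reflection $r$ of $G$ has its root in a single component $V_i$. Writing $r = (r_1,\dots,r_k)$ with $r_j\in G_j$, one has $\Fix_V(r) = \bigoplus_j \Fix_{V_j}(r_j)$; since this space has codimension $1$, exactly one summand drops in dimension, so exactly one $r_i$ is a reflection of $G_i$ and the remaining $r_j$ are trivial. Thus the reflections of $G$ are precisely the reflections of the factors $G_i$, each acting trivially off its own $V_i$.

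With this in hand, (i) is immediate: for $v = \sum_i v_i$ and $p = (p_1,\dots,p_k)\in P$, the equation $pv=v$ holds if and only if $p_i v_i = v_i$ for every $i$, so $\Fix_V(P) = \bigoplus_i \Fix_{V_i}(P) = \bigoplus_i U_i$. For (iii) and (ii) let $R_i$ denote the set of reflections of $P$ whose roots lie in $V_i$, and prove $\langle R_i\rangle = P_i$ by double inclusion. Each $r\in R_i$ lies in $P = G(U)$, hence fixes $U\supseteq U_i$ pointwise, and being a reflection of $G_i$ it fixes $U_i$ inside $V_i$, so $r\in G_i(U_i) = P_i$. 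Conversely $P_i = G_i(U_i)$ is parabolic, so by Steinberg's theorem it is generated by the reflections of $G_i$ fixing $U_i$ pointwise; each such reflection $s$ acts trivially on every $V_j$ with $j\ne i$ and therefore fixes all of $U = \bigoplus_j U_j$, whence $s\in G(U) = P$ and $s\in R_i$. This yields (iii); since $P$ is generated by the union of the $R_i$ (Steinberg, now in $G$) and the factors $G_i$ commute with trivial pairwise intersection, $P = \prod_i\langle R_i\rangle = P_1\times\dots\times P_k$, which is (ii).

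Assertions (iv) and (v) are then formal consequences of the direct-product structure. For (iv) I would use that a subgroup of $\prod_i G_i$ of the shape $\prod_i K_i$ recovers each factor as $K_i = (\prod_j K_j)\cap G_i$: if $g=(g_i)$ normalises $P = \prod_i P_i$, then intersecting $gPg^{-1} = \prod_i g_i P_i g_i^{-1}$ with $G_i$ gives $g_i P_i g_i^{-1} = P_i$, so $g_i\in N_{G_i}(P_i)$, and the reverse inclusion is clear; hence $N_G(P) = \prod_i N_{G_i}(P_i)$. For (v), writing $N_{G_i}(P_i) = P_i\rtimes H_i$ and using (ii) and (iv), one has $P\cdot\prod_i H_i = \prod_i(P_i H_i) = \prod_i N_{G_i}(P_i) = N_G(P)$, while $(\prod_i P_i)\cap(\prod_i H_i) = \prod_i(P_i\cap H_i) = 1$ and $P\trianglelefteq N_G(P)$; therefore $\prod_i H_i$ is a complement to $P$.

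The one step that needs care, and that I regard as the crux, is the identification $\langle R_i\rangle = P_i$ underlying (ii) and (iii): everything hinges on the single-factor-root observation, which guarantees that the reflection generators of $P$ partition cleanly according to the components $V_i$, together with a separate application of Steinberg's theorem within each $G_i$ to the parabolic $G_i(U_i)$. Once this generation statement is secured, the remaining assertions reduce to routine manipulations with internal direct products, and in particular no appeal to the intrinsic identity $\Fix_{V_i}(P_i) = U_i$ is required.
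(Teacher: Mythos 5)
Your proof is correct, but it takes a genuinely different route from the paper's for parts (ii)--(iv). The paper never touches reflections in these steps: since $P = G(U)$ with $U = \Fix_V(P)$ (established in the introduction), the componentwise calculation of (i), applied to the vectors of $U$, already gives (ii) --- $g$ fixes $U$ pointwise if and only if each $g_i$ fixes $U_i$ pointwise, i.e.\ $g_i \in P_i$; then (iii) is quoted from \cite[Theorem~1.27]{lehrer-taylor:2009}, and (iv) follows from the characterisation $N_G(P) = G_U$ of the normaliser as a setwise stabiliser, via $g(U) = U \Leftrightarrow g_i(U_i) = U_i$ for all $i$. You instead make Steinberg's theorem the engine: applied in $G$ it writes $P = \langle\bigcup_i R_i\rangle$, your codimension observation sorts the reflections of $G$ into the factors, and applied in each $G_i$ (together with the nontrivial containment $G(U)\subseteq P$) it yields $\langle R_i\rangle = P_i$, so you obtain (iii) first and (ii) as a corollary; your (iv) is pure group theory, recovering the factors of a product subgroup by intersecting with $G_i$ and noting that conjugation acts componentwise. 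The trade-off: your argument proves (iii) rather than citing it, and your formal (iv) sidesteps the identification $\Fix_{V_i}(P_i) = U_i$, which the paper's version tacitly needs in order to read $N_{G_i}(P_i)$ as $(G_i)_{U_i}$ (it does follow from (ii), but is left unsaid); on the other hand, you invoke Steinberg twice in places --- notably (ii) --- where the paper's fixed-space argument requires no appeal to reflections at all. Parts (i) and (v) are handled identically in both arguments.
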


\begin{proof}
(i) Suppose that $g\in G$ and $v = v_1 +\dots + v_k$, where $v_i\in V_i$.
Since $V_i$ is $G$-invariant we have $g(v) = v$ if and only if $g(v_i) = v_i$
for all $i$. This proves (i) and since $g = g_1g_2\cdots g_k$ with $g_i\in
G_i$, the same calculation shows that $g\in P$ if and only if $g_i\in P_i$
for all $i$, thus proving (ii). Then (iii) follows from \cite[Theorem
1.27]{lehrer-taylor:2009}.

Finally, (iv) follows from the fact that $g(U) = U$ if and only if $g_i(U_i)
= U_i$ for all~$i$ and then (v) follows from (iv).
\end{proof}

As a consequence of this theorem we may restrict our attention to parabolic
subgroups of irreducible unitary reflection groups.

\smallskip
According to Shephard and Todd \cite{shephard-todd:1954} (see
\cite{cohen:1976,cohen:1978,lehrer-taylor:2009} for more recent proofs), the
finite irreducible unitary reflection groups are:
\begin{enumerate}[\ (i)]
\item the symmetric groups, $\Sym(n)$,
\item imprimitive groups $G(m,p,n)$ for $m,n \ge 2$, except $G(2,2,2)$,
\item cyclic groups,
\item 34 primitive groups, of rank at most 8, labelled $G_4$, $G_5$, \dots,
    $G_{37}$.
\end{enumerate}

A cyclic reflection group has no proper parabolic subgroups and we treat the
remaining cases separately, beginning with the symmetric and imprimitive
groups, culminating in Theorem \ref{thm:main-imprim}.  The main results on
parabolic subgroups of the primitive reflection groups can be found in
Theorems \ref{thm:rk1} and \ref{thm:prim3}. Further information about the
parabolic complements is summarised in the tables of \S\ref{sec:tab}.

\section{The imprimitive unitary reflection groups $G(m,p,n)$}%
\label{sec:imprimitive}

\begin{defn}\label{defn:imprim}
A group $G$ acting on a vector space $V$ is \emph{imprimitive} if, for some
$k > 1$, $V$ is a direct sum of non-zero subspaces $V_i$ ($1\le i \le k$)
such that the action of $G$ on $V$ permutes $V_1$, $V_2$, \dots,~$V_k$ among
themselves; otherwise $G$ is \emph{primitive}.  The set $\Omega =
\{V_1,V_2,\dots,V_k\}$ is a \emph{system of imprimitivity} for~$G$. If $G$
acts transitively on $\Omega$, then $\Omega$ is a \emph{transitive} system of
imprimitivity.
\end{defn}

The imprimitive unitary reflection group $G(m,p,n)$ may be defined as
follows. Let $V$ be a complex vector space of dimension $n$ with a positive
definite hermitian form $(-,-)$.  Let $I = \{1,2,\dots,n\}$, let $\Lambda =
\{\,e_i \mid i\in I\,\}$ be an orthonormal basis for $V$, and let $\rou_m$ be
the group of $m$th roots of unity.  Given a function $\theta : I \to \rou_m$,
the linear transformation that maps $e_i$ to $\theta(i) e_i$ ($1\le i\le n$)
will be denoted by $\hat\theta$.

If $p$ divides $m$, let $A(m,p,n)$ be the group of all linear transformations
$\hat\theta$ such that $\prod_{i=1}^n \theta(i)^{m/p} = 1$. For $\sigma\in
\Sym(n)$, the action of $\sigma$ on $V$ is defined by $\sigma(e_i) =
e_{\sigma(i)}$.

The group $G(m,p,n)$ is the semidirect product of $A(m,p,n)$ by the symmetric
group $\Sym(n)$ with the action on $V$ given above.  When the elements of
$G(m,p,n)$ are written as pairs $(\sigma;\theta)$, the multiplication becomes
\[
  (\sigma_1;\theta_1)(\sigma_2;\theta_2) = (\sigma_1\sigma_2;
    \theta_1^{\sigma_2}\theta_2),
\]
where for $\sigma\in\Sym(n)$ and $\theta : I \to \rou_m$ we define
$\theta^{\sigma}(i) = \theta(\sigma(i))$.

The group $G(m,1,n)$ is the wreath product $\rou_m\wr\,\Sym(n)$ and
$G(m,p,n)$ is a normal subgroup of index $p$ in $G(m,1,n)$. If $n > 1$,
$G(m,p,n)$ is imprimitive and $\{V_1,V_2,\dots,V_n\}$ is a transitive
system of imprimitivity where $V_i = \C e_i$. The group $G(m,p,1) =
G(m/p,1,1)$ is cyclic of order $m/p$. The centre of $G(m,p,n)$ consists of
scalar matrices and its order is $m\gcd(p,n)/p$.

The group $G(1,1,n)$ is isomorphic to $\Sym(n)$ and its elements are the
pairs $(\sigma;\textbf 1)$, where $\sigma\in\Sym(n)$ and $\textbf 1(i) = 1$
for all~$i$. In particular, if $\sigma_i$ is the transposition $(i,i+1)$, the
element $r_i = (\sigma_i;\textbf1)$ is a reflection of order 2, which
interchanges $e_i$ and $e_{i+1}$ and fixes $e_j$ for $j\ne i, i+1$.

Fix a primitive $m$th root of unity $\zeta$ and let $t = (1;\theta_\zeta)$,
where $\theta_\zeta(1) = \zeta$ and $\theta_\zeta(i) = 1$ if $i\ne 1$. Then
$t$ is a reflection of order $m$ and $s = t^{-1}r_1t$ is a reflection of
order 2, which interchanges $e_1$ and $\zeta e_2$.  These reflections
generate the groups $G(m,p,n)$ (see \cite[Chap 2, \S7]{lehrer-taylor:2009}):
\begin{equation}\label{eqn:gens}
\begin{aligned}
 G(1,1,n) &= \( r_1, r_2, \dots, r_{n-1}\),\\
 G(m,m,n) &= \( s, r_1, r_2, \dots, r_{n-1}\),\\
 G(m,1,n) &= \( t, r_1, r_2, \dots, r_{n-1}\),\\
 G(m,p,n) &= \( s, t^p, r_1, r_2, \dots, r_{n-1}\)\quad\text{for $p\ne1,m$}.
\end{aligned}
\end{equation}

The \emph{support} of a reflection subgroup is the subspace spanned by the
roots of its reflections. The group $G(1,1,n)$ is imprimitive in its action
on $V$ but for $n \ge 5$ the action on its support is primitive.

\begin{defn}
We write $\lambda \vdash n$ to mean that the sequence $\lambda =
(n_1,n_2,\dots,n_d)$ is a partition of~$n$; that is, $n_1$, $n_2$,
\dots,~$n_d$ are positive integers such that $n_1\ge n_2\ge\cdots \ge n_d$
and $n = n_1+n_2 +\cdots + n_d$. We also use the notation $1^{b_1}\,2^{b_2}
\cdots n^{b_n} \vdash n$ to denote a partition of $n$ with $b_i$ parts of
size $i$ for $1\le i\le n$.
\end{defn}

\begin{defn}\label{defn:part}
Suppose that $\Xi = (\Lambda_0,\Pi,\xi)$, where $\Lambda_0$ is a subset of
$\Lambda$, $\Pi = (\Lambda_1, \Lambda_2, \dots,\Lambda_d)$ is a partition
of $\Lambda\setminus\Lambda_0$ and $\xi : \Lambda\setminus\Lambda_0 \to
\rou_m$. Let $n_i = |\Lambda_i|$ and define the subgroup $P_\Xi$ of
$G(m,p,n)$ by
\[
  P_\Xi = P_0\times P_1\times \cdots\times P_d,
\]
where $P_0$ is the reflection group $G(m,p,n_0)$ acting on the subspace
spanned by $\Lambda_0$ and for $1\le i\le d$, $P_i$ is the reflection group
$G(1,1,n_i)\simeq \Sym(n_i)$ permuting the vectors $\{\,\xi(e)e \mid e \in
\Lambda_i\,\}$. The factor $P_0$ is omitted if $\Lambda_0 = \emptyset$. If $m
= 1$, we require $\Lambda_0 = \emptyset$.

By construction $P_\Xi$ is the pointwise stabiliser in $G(m,p,n)$ of the
subspace spanned by the vectors $\sum_{e\in \Lambda_i}\xi(e) e$ for $1\le
i\le d$ and therefore $P_\Xi$ is a parabolic subgroup.
\end{defn}

\begin{defn}
If $n_0 = |\Lambda_0|$ and if $\lambda$ is the partition of $n - n_0$
obtained by writing the quantities $|\Lambda_i|$ (for $1\le i\le d$) in
descending order, the \emph{type} of $P_\Xi$ is $(n_0,\lambda)$.
\end{defn}

\begin{lemma}\label{lemma:para}
Given $n_0 \le n$ and $\lambda = 1^{b_1}\,2^{b_2}\cdots (n-n_0)^{b_{n-n_0}} \vdash
n - n_0$, the number of parabolic subgroups of $G(m,p,n)$ of type
$(n_0,\lambda)$ is
\begin{equation}\label{eqn:orbit}
  \frac{n!\,m^{n-n_0-d}}{n_0!\;b_\lambda}
\end{equation}
where $d = b_1 + b_2 + \dots + b_{n-n_0}$ and where
\[
  b_\lambda = b_1!\,b_2!\,\cdots\,b_{n-n_0}!\;(1!)^{b_1}(2!)^{b_2}\cdots
  ((n-n_0)!)^{b_{n-n_0}}.
\]
\end{lemma}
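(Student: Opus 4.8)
The plan is to count the subgroups directly, factoring the data $\Xi=(\Lambda_0,\Pi,\xi)$ that defines a parabolic of type $(n_0,\lambda)$ into a combinatorial ``skeleton'' $(\Lambda_0,\Pi)$ and the multiplier function $\xi$, counting each separately, and then accounting for the redundancy in $\xi$. (An orbit--stabiliser count, treating the type as a single conjugacy class, would express the answer as $|G|/|N_G(P)|$, but this is circular here since computing $|N_G(P)|$ is the goal of the paper, so I prefer the self-contained enumeration.) First I would count the skeletons. Choosing $\Lambda_0$ together with an \emph{ordered} sequence of blocks of sizes $n_0,n_1,\dots,n_d$ is a multinomial choice giving $n!/(n_0!\,n_1!\cdots n_d!)$; since the blocks $\Lambda_1,\dots,\Lambda_d$ are unordered and those of equal size are interchangeable, I divide by $\prod_i b_i!$, and using $n_1!\cdots n_d!=\prod_i (i!)^{b_i}$ this yields exactly $n!/(n_0!\,b_\lambda)$ skeletons.

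The heart of the matter is to show that, for a fixed skeleton, the number of \emph{distinct} subgroups $P_\Xi$ obtained as $\xi$ varies is $m^{n-n_0-d}$ rather than the naive $m^{n-n_0}$. The key observation is that $P_\Xi$ depends on $\xi$ only through its restriction to each block up to a global scalar: the reflections generating the factor $P_i\simeq\Sym(n_i)$ interchange the vectors $\xi(e)e$, and such a reflection has root line spanned by $\xi(e)e-\xi(f)f$, which is unchanged when $\xi|_{\Lambda_i}$ is multiplied by a common $c\in\rou_m$. Since $\rou_m$ acts freely on the $m^{n_i}$ functions $\Lambda_i\to\rou_m$ (the stabiliser of any such function is trivial because the values are nonzero), each block yields $m^{n_i}/m=m^{n_i-1}$ scaling classes, and multiplying over the $d$ blocks gives $\prod_i m^{n_i-1}=m^{(n-n_0)-d}$, using $\sum_i n_i=n-n_0$. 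Combined with the skeleton count this produces the claimed formula.

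What remains---and what I expect to be the main obstacle---is to prove that this scaling redundancy is the \emph{only} coincidence, i.e.\ that $\Xi\mapsto P_\Xi$ descends to a bijection between scaling classes of data and parabolics of the given type. Here I would use the fact, recorded in the introduction, that a parabolic $P$ equals $G(U)$ for $U=\Fix_V(P)$, so that $P$ and $U$ determine one another; this gives injectivity up to scaling once the skeleton is pinned down, because $\langle w_i\rangle=U\cap\langle\Lambda_i\rangle$ recovers $w_i=\sum_{e\in\Lambda_i}\xi(e)e$ up to a scalar, hence $\xi|_{\Lambda_i}$ up to a common factor. The delicate points are (a) recovering the skeleton $(\Lambda_0,\Pi)$ intrinsically from $P$---the blocks of size $\ge 2$ come from the $\Sym$-type irreducible factors of $P$ (each acting on the coordinate subspace $\langle\Lambda_i\rangle$, the sum of the factor's support and its fixed line, from which $\Lambda_i$ is read off as the basis vectors it contains), the size-$1$ blocks are the coordinate axes $\C e_j$ lying in $U$, and $\Lambda_0$ is the support of the remaining factor $G(m,p,n_0)$, distinguished when $m\ge 2$ by its diagonal reflections---and (b) ruling out collisions between distinct skeletons, while treating degenerate cases such as $n_0=1$ with $m=p$, where $G(m,m,1)$ is trivial. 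Finally I would invoke the classification of parabolic subgroups of the imprimitive groups to guarantee that every parabolic of type $(n_0,\lambda)$ is some $P_\Xi$, so that the enumeration is exhaustive.
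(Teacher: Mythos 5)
Your proposal takes essentially the same route as the paper's proof, which simply counts the triples $(\Lambda_0,\Pi,\xi)$ and divides out precisely the per-block scaling redundancy you identify, arriving at $\binom{n}{n_0}\frac{(n-n_0)!}{b_\lambda}\,m^{n-n_0-d}$. If anything you are more careful than the paper: its one-line argument asserts the criterion ``same subgroup if and only if same skeleton and blockwise-proportional $\xi$'' without proof, and the degenerate case you flag ($n_0=1$ with $p=m$, where $G(m,m,1)$ is trivial and distinct skeletons genuinely yield the same subgroup) is glossed over there as well, the paper in effect counting scaling classes of triples rather than subgroups.
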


\begin{proof}\relax(cf.\ \cite[\S 6]{orlik-terao:1992})
We count the triples $(\Lambda_0,\Pi,\xi)$ of type $(n_0,\lambda)$. Triples
$(\Lambda_0,\Pi,\xi)$ and $(\Lambda_0,\Pi,\xi')$ define the same parabolic
subgroup if and only if, for $1 \le i\le d$, the vectors $\sum_{e\in
\Lambda_i}\xi(e) e$ and $\sum_{e\in \Lambda_i}\xi'(e) e$ are equal up to a
non-zero scalar multiple.  Therefore, the number of parabolic subgroups is
\[
  \binom{n}{n_0}\frac{(n-n_0)!}{b_\lambda} m^{n-n_0-d}
  = \frac{n!\,m^{n-n_0-d}}{n_0!\;b_\lambda}.\qedhere
\]
\end{proof}

\begin{thm}
A subgroup $P$ of $G(m,p,n)$ is parabolic if and only if $P = P_\Xi$ for some
$\Xi = (\Lambda_0,\Pi,\xi)$.
\end{thm}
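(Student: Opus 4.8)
The plan is to prove both directions of the biconditional. The "if" direction is already essentially established: Definition~\ref{defn:part} constructs each $P_\Xi$ as a pointwise stabiliser, hence a parabolic subgroup by the observation in the introduction (a pointwise stabiliser of a subset equals $G(U)$ for $U$ its fixed space, and by Steinberg's theorem this is a reflection subgroup). So the substantive work is the "only if" direction: given an arbitrary parabolic subgroup $P = G(X)$, I must exhibit a triple $\Xi = (\Lambda_0, \Pi, \xi)$ with $P = P_\Xi$.

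**First** I would reduce to understanding $U = \Fix_V(P)$, since $P = G(U)$. The idea is to read off the combinatorial data from how $P$ acts relative to the coordinate decomposition $V = \bigoplus_i \C e_i$. Because $P$ is generated by reflections (Steinberg), and every reflection in $G(m,p,n)$ is either a diagonal reflection $\hat\theta$ (root $e_i$, contributing a "blow-down" of the $i$th coordinate) or a reflection of the form $r_{a,\zeta}$ whose root is of the shape $e_i - \zeta e_j$ (tying coordinates $i$ and $j$ together with a phase), I would analyse the root system of $P$. Define a graph on $I = \{1,\dots,n\}$ by joining $i \sim j$ whenever some reflection in $P$ has root $e_i - \zeta e_j$; the connected components of this graph, together with the set $\Lambda_0$ of indices carrying a diagonal reflection of $P$, should recover the partition $\Pi$ and the subset $\Lambda_0$.

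**Next** I would verify that $P$ respects this decomposition as a direct product. The key step is to show that if $i$ and $j$ lie in different components then no element of $P$ mixes them, so that $P$ splits as a direct product over the blocks $\Lambda_0, \Lambda_1, \dots, \Lambda_d$, matching the product structure $P_0 \times P_1 \times \cdots \times P_d$ of Definition~\ref{defn:part}. Within each block $\Lambda_i$ ($i \ge 1$), the reflections with roots $e_a - \zeta e_b$ force a consistent system of phases, and I would extract $\xi$ by fixing $\xi(e) = 1$ on a spanning-tree representative and propagating the phases $\zeta$ along edges; consistency (independence of the path, i.e.\ well-definedness of $\xi$) follows because the reflections generating a component $\Sym(n_i)$ must satisfy the braid/commutation relations, pinning down the phases around any cycle. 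On the $\Lambda_0$ block I would check that the diagonal part of $P$, being a reflection subgroup of the diagonal torus $A(m,p,n)$ that contains all the relevant order-$m$ reflections, is exactly $G(m,p,n_0)$.

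**The main obstacle** I anticipate is the well-definedness and coherence of the phase function $\xi$: disentangling, purely from the generating reflections of $P$, that the cocycle of phases around each component is trivial so that a genuine function $\xi : \Lambda \setminus \Lambda_0 \to \rou_m$ exists, and simultaneously confirming that the $\Lambda_0$ block absorbs exactly those coordinates on which $P$ contains a diagonal (order $> 1$) reflection and no others. A clean way to finish, once the combinatorial data $(\Lambda_0, \Pi, \xi)$ are in hand, is to compare fixed spaces: show $\Fix_V(P) = \Fix_V(P_\Xi)$ directly from the vectors $\sum_{e \in \Lambda_i} \xi(e)\, e$, and then invoke $P = G(\Fix_V(P))$ and $P_\Xi = G(\Fix_V(P_\Xi))$ to conclude $P = P_\Xi$. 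This fixed-space comparison sidesteps a generator-by-generator identification and reduces the whole problem to the linear-algebra statement that the two tuples of data cut out the same subspace of $V$.
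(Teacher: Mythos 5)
Your ``if'' direction and your closing device (compare fixed spaces and invoke $P = G(\Fix_V(P))$, $P_\Xi = G(\Fix_V(P_\Xi))$) are sound, but the construction of $\Xi$ from the reflections of $P$ breaks down exactly at the point you flagged as the main obstacle, and the justification you offer there is false. First, $\Lambda_0$ cannot be identified as ``the indices carrying a diagonal reflection of $P$'': when $p = m$ the group $G(m,m,n)$ contains no diagonal reflections at all, yet parabolic subgroups with $\Lambda_0 \neq \emptyset$ exist. Take $P = G(3,3,2)$, the pointwise stabiliser of $e_3$ in $G(3,3,3)$: its only reflections have roots $e_1 - e_2$, $e_1 - \omega e_2$, $e_1 - \omega^2 e_2$, so your recipe gives $\Lambda_0 = \emptyset$ and one nontrivial component $\{1,2\}$. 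Second, on that component the phase cocycle is \emph{not} trivial --- there are three parallel edges with distinct phases $1$, $\omega$, $\omega^2$ --- so no function $\xi$ exists; and your claim that braid/commutation relations pin down the phases is refuted by this very example, since these three reflections pairwise satisfy braid relations (they generate a dihedral group of order $6$). A spanning-tree choice produces $P_\Xi \simeq \Sym(2)$ of order $2$, with $\Fix_V(P_\Xi) = \langle e_1 + e_2, e_3\rangle \neq \langle e_3\rangle = \Fix_V(P)$, so the final fixed-space comparison does not rescue the argument; it merely detects that $\Xi$ was built wrongly. The same phenomenon occurs already for $m=2$: in type $D$ the reflections with roots $e_1 - e_2$ and $e_1 + e_2$ commute, yet their phases are inconsistent.

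The missing idea is that generation by reflections is not enough --- reflection subgroups such as $\langle r_{e_1-e_2}\rangle \subset G(3,3,2)$, or $G(m,m,a)\times G(m,m,b)$ on disjoint coordinate blocks, are not parabolic --- so the parabolic hypothesis must enter the construction of $\Xi$ itself, not only in the last step. The correct dichotomy is: a component with consistent phases contributes a block $\Lambda_i$ ($i\ge 1$) of $\Pi$ as in Definition~\ref{defn:part}, whereas the components with inconsistent phases, together with the indices carrying diagonal reflections, must be shown to coalesce into a \emph{single} block $\Lambda_0$ on which $P$ induces the full group $G(m,p,n_0)$; this step genuinely needs $P = G(\Fix_V(P))$ (for instance: an inconsistent component contributes nothing to $\Fix_V(P)$, so the pointwise stabiliser of $\Fix_V(P)$ contains the full imprimitive group on the union of all such coordinates, forcing them into one block). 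Note also that the paper offers no independent argument to compare yours against: its proof is a citation of Lemma 3.3 and Theorem 3.11 of \cite{taylor:2012}, where the analysis runs in the opposite direction to yours --- starting from the fixed subspace and reading off $\Lambda_0$ as the coordinates vanishing on it, and the blocks and phases from proportionality of the remaining coordinates --- rather than attempting to reconstruct the data from the generating reflections.
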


\begin{proof}
This follows from \cite[Lemma 3.3 and Theorem 3.11]{taylor:2012}.
\end{proof}

\begin{defn}\label{defn:std}
Given $n_0$ such that $0\le n_0 < n$ and $\lambda =
(n_1,n_2,\dots,n_d)\vdash n - n_0$ let $k_{-1} = 0$ and for $0\le i \le d$
let $k_i = n_0 + n_1 +\cdots +n_i$. For $1\le i\le d$ define $\Lambda_i = \{
e_j \mid k_{i-1} < j \le k_i\}$ and let $\Pi = (\Lambda_1,\Lambda_2,
\dots,\Lambda_d)$.

The \emph{standard parabolic subgroup} of type $(n_0,\lambda)$ is
$P_{(n_0,\lambda)} = P_{(\Lambda_0,\Pi,\textbf1)}$, where $\textbf1(e) = 1$
for all $e\in\Lambda\setminus\Lambda_0$. For consistency with Definition
\ref{defn:part} we require $n_0 = 0$ if $m = 1$.
\end{defn}

As a reflection group
\begin{equation}\label{eqn:standard}
  P_{(n_0,\lambda)} =  G(m,p,n_0) \times \prod_{i=1}^{d} G(1,1,n_i)
\end{equation}
where $G(m,p,n_0)$ acts on the subspace spanned by $\Lambda_0$ and where
$G(1,1,n_i)$ acts on the subspace of $V$ spanned by $\Lambda_i$. If $n_i =
1$, the factor $G(1,1,n_i)$ is trivial and if $n_0 = 0$ there is no factor
$G(m,p,n_0)$.

\begin{defn}
Given $\alpha\in\rou_m$, define $\theta_\alpha : I \to \rou_m$ by
$\theta_\alpha(1) = \alpha$ and $\theta_\alpha(i) = 1$ for $i > 1$. Then
$\hat\theta_\alpha\in G(m,1,n)$ and we define
\begin{equation}\label{eqn:A}
  P_{(n_0,\lambda)}^\alpha =
  \hat\theta_\alpha P_{(n_0,\lambda)}\hat\theta_\alpha^{-1}.
\end{equation}
\end{defn}

\begin{thm}[{\cite[Theorem 2.1]{taylor:2012}}]
A subgroup of $G(m,p,n)$ is parabolic if and only if it is conjugate to
$P_{(n_0,\lambda)}^\alpha$ for some $n_0$, $\lambda = (n_1,n_2,\dots,n_d)$
and $\alpha$. Furthermore, if $n_0\ne 0$, then $P_{(n_0,\lambda)}^\alpha$ is
conjugate in $G(m,p,n)$ to $P_{(n_0,\lambda)}$ for all $\alpha$, whereas the
groups $P_{(0,\lambda)}^\alpha$ and $P_{(0,\lambda)}^\beta$ are conjugate in
$G(m,p,n)$ if and only if $\alpha\beta^{-1}\in \rou_{m/e}$ where $e =
\gcd(p,n_1,\dots,n_d)$.
\end{thm}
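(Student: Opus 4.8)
The plan is to combine the classification of the parabolic subgroups as the groups $P_\Xi$ (the preceding theorem) with an explicit analysis of how conjugation by $G(m,p,n)$ acts on the data $\Xi=(\Lambda_0,\Pi,\xi)$. Writing an element as $g=(\sigma;\theta)$, a direct computation from the multiplication rule gives $g(e_j)=\theta(j)e_{\sigma(j)}$, so conjugating $P_\Xi$ by $g$ produces $P_{\Xi'}$ in which the blocks are permuted by $\sigma$ and the scalar data transforms by $\xi'(e_{\sigma(j)})=\theta(j)\xi(e_j)$. Two triples define the same parabolic precisely when their $\xi$-values agree on each block up to a common factor in $\rou_m$, the criterion established in the proof of Lemma~\ref{lemma:para}. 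First I would use a permutation $\sigma\in\Sym(n)\subseteq G(m,p,n)$ to move $\Lambda_0$ onto $\{e_1,\dots,e_{n_0}\}$ and the blocks $\Lambda_i$ into the standard consecutive positions with sizes descending; this reduces every parabolic to one in standard position, after which only the scalar data $\xi$ remains to be normalised.

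For the case $n_0\neq 0$ the key observation is that $\hat\theta_\alpha$, viewed as a transformation of $\mathrm{span}(\Lambda_0)$, is a diagonal monomial matrix with entries in $\rou_m$ and so lies in $G(m,1,n_0)$; since $G(m,p,n_0)\trianglelefteq G(m,1,n_0)$, conjugation by $\hat\theta_\alpha$ fixes $G(m,p,n_0)$ and leaves the remaining factors untouched, so in fact $P^\alpha_{(n_0,\lambda)}=P_{(n_0,\lambda)}$ and a fortiori they are conjugate. To finish this case I would show that an arbitrary $\xi$ can be trivialised: choosing $\theta(j)=c_i\,\xi(e_j)^{-1}$ on each symmetric block makes that block constant, and because $\Lambda_0$ is non-empty one is free to adjust $\theta$ on $\Lambda_0$ to meet the single product constraint $\prod_j\theta(j)^{m/p}=1$; absorbing the residual block constants $c_i$ by the block-scaling redundancy then returns $P_{(n_0,\lambda)}$. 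Thus every parabolic with $n_0\neq 0$ is conjugate to $P_{(n_0,\lambda)}$.

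The substance of the theorem lies in the case $n_0=0$, where no $\Lambda_0$ is available to soak up the product constraint. Here I would introduce the invariant $\delta(\Xi)=\prod_{e}\xi(e)\in\rou_m$. Under the block-scaling redundancy (scalars $c_i\in\rou_m$) it changes by $\prod_i c_i^{\,n_i}$, which ranges over $\rou_{m/g'}$ with $g'=\gcd(m,n_1,\dots,n_d)$; under conjugation by $g=(\sigma;\theta)\in G(m,p,n)$ it changes by $\prod_j\theta(j)\in\rou_{m/p}$; and it is unchanged by permutations of the coordinates. Since $\rou_{m/p}$ and $\rou_{m/g'}$ generate the subgroup of order $\lcm(m/p,m/g')=m/\gcd(p,g')$, and $p\mid m$ gives $\gcd(p,g')=\gcd(p,n_1,\dots,n_d)=e$, the class of $\delta$ in $\rou_m/\rou_{m/e}$ is a conjugacy invariant. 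Because $\delta(P^\alpha_{(0,\lambda)})=\alpha$, necessity of $\alpha\beta^{-1}\in\rou_{m/e}$ is immediate; for sufficiency I would exhibit, for any prescribed coset value, an explicit $\theta$ together with block-scalings realising the required change of $\delta$ and normalising $\xi$ to the form carrying $\alpha$ on $e_1$ and $1$ elsewhere, which simultaneously proves that every $n_0=0$ parabolic is conjugate to some $P^\alpha_{(0,\lambda)}$.

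I expect the main obstacle to be the bookkeeping in the $n_0=0$ case: one must verify that the two independent sources of freedom---the determinant constraint contributing $\rou_{m/p}$ and the block-scaling redundancy contributing $\rou_{m/g'}$---combine to exactly $\rou_{m/e}$ and no more, and that permutations of equal-sized blocks, together with within-block permutations, contribute nothing further to $\delta$. Handling repeated part sizes carefully, so that the permutation group on equal blocks is accounted for without introducing spurious identifications, is the delicate point; the underlying arithmetic reduces to the identity $\gcd\bigl(p,\gcd(m,n_1,\dots,n_d)\bigr)=\gcd(p,n_1,\dots,n_d)$, which holds because $p\mid m$.
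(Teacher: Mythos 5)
Your proposal cannot be compared against a proof in the paper, because the paper contains none: this statement is imported verbatim as Theorem~2.1 of \cite{taylor:2012}, just as the preceding classification of parabolics as the groups $P_\Xi$ is imported from Lemma~3.3 and Theorem~3.11 of that reference. Judged on its own, your self-contained argument is correct, and all of its key computations check out against the paper's conventions. The conjugation formula is right (for $g=(\sigma;\theta)$ one has $g(e_j)=\theta(j)e_{\sigma(j)}$, so the data transforms by $\xi'(e_{\sigma(j)})=\theta(j)\xi(e_j)$); for $n_0\ne 0$ your argument in fact yields the sharper conclusion $P^\alpha_{(n_0,\lambda)}=P_{(n_0,\lambda)}$ (equality, not merely conjugacy), since $\hat\theta_\alpha$ normalises $G(m,p,n_0)$ and centralises the symmetric-group factors; and for $n_0=0$ the three facts underpinning your invariant $\delta(\Xi)=\prod_e\xi(e)$ are all verifiable: $(1;\theta)\in G(m,p,n)$ forces $\prod_j\theta(j)\in\rou_{m/p}$ and every element of $\rou_{m/p}$ so arises; block rescaling changes $\delta$ exactly by the image of $(c_1,\dots,c_d)\mapsto\prod_i c_i^{n_i}$, which is $\rou_{m/g'}$ with $g'=\gcd(m,n_1,\dots,n_d)$; and $\rou_{m/p}\,\rou_{m/g'}=\rou_{m/\gcd(p,g')}=\rou_{m/e}$ because $p\mid m$. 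The sufficiency step you only sketch does close: setting $\theta(1)=c_1^{-1}\beta\alpha^{-1}$ and $\theta(j)=c_{i(j)}^{-1}$ for $j\ne 1$, the membership constraint $\prod_j\theta(j)^{m/p}=1$ is solvable for the $c_i$ precisely when $\alpha\beta^{-1}\in\rou_{m/e}$. One point to make explicit in a write-up: the identification criterion you quote from the proof of Lemma~\ref{lemma:para} is stated there only for triples sharing the same $(\Lambda_0,\Pi)$, so the necessity direction also needs the (implicit, and true) fact that a parabolic subgroup determines its partition; this is what makes $\delta$ modulo $\rou_{m/e}$ an invariant of the conjugacy class rather than of the chosen presentation. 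What your route buys is a transparent, checkable proof inside the paper; what the paper's citation buys is brevity, at the cost of sending the reader elsewhere for precisely the computation you have reconstructed.
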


\begin{cor}\label{cor:orders}
If $G = G(m,p,n)$ and $P = P_{(n_0,\lambda)}$, where $\lambda =
(n_1,n_2,\dots,n_d)\vdash n - n_0$ has $b_i$ parts of size $i$ for $1\le i\le
n-n_0$, then
\[
  |N_G(P)/P\,| = \begin{cases}
    \prod_{i=1}^{n-n_0} m^{b_i}\,b_i!&n_0 \ne 0\\[9pt]
    \dfrac{e}{p}\prod_{i=1}^n m^{b_i}\,b_i!&n_0 = 0,
  \end{cases}
\]
where $e = \gcd(p,n_1,\dots,n_d)$.
\end{cor}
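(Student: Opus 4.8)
The plan is to combine the orbit--stabiliser theorem with the enumeration in Lemma~\ref{lemma:para}. Since $N_G(P)$ is the stabiliser of $P$ under the conjugation action of $G$, the number of $G$-conjugates of $P$ equals $|G:N_G(P)|$, and hence
\[
  |N_G(P)/P| = \frac{|G|}{|P|\,c},
\]
where $c$ denotes the size of the conjugacy class of $P$. First I would record the two group orders that enter this formula: $|G(m,p,n)| = m^n n!/p$, and, from \eqref{eqn:standard} together with $|G(m,p,n_0)| = m^{n_0}n_0!/p$ and $|G(1,1,n_i)| = n_i!$,
\[
  |P| = \frac{m^{n_0}n_0!}{p}\prod_{i=1}^{d} n_i! ,
\]
the leading factor $m^{n_0}n_0!/p$ being absent (equivalently, replaced by $1$) when $n_0 = 0$.

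Next I would identify $c$ using the classification of parabolic subgroups up to conjugacy stated above. When $n_0\ne 0$ the subgroups $P_{(n_0,\lambda)}^\alpha$ are all conjugate to $P$, so the parabolic subgroups of type $(n_0,\lambda)$ form a single conjugacy class; thus $c$ is exactly the count \eqref{eqn:orbit}. Substituting the orders above gives
\[
  |N_G(P)/P| = \frac{|G|}{|P|}\cdot\frac{n_0!\,b_\lambda}{n!\,m^{n-n_0-d}} = \frac{m^{d}\,b_\lambda}{\prod_{i=1}^{d} n_i!}.
\]
It then remains to invoke the routine identity $\prod_{i=1}^{d} n_i! = \prod_{i=1}^{n-n_0}(i!)^{b_i}$, which cancels the matching factor of $b_\lambda$ and leaves $m^{d}\prod_i b_i! = \prod_{i=1}^{n-n_0} m^{b_i}b_i!$, as claimed.

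When $n_0 = 0$ the same computation applies, except that the type $(0,\lambda)$ parabolics no longer form a single class: by the conjugacy criterion, $P_{(0,\lambda)}^\alpha$ and $P_{(0,\lambda)}^\beta$ are conjugate precisely when $\alpha\beta^{-1}\in\rou_{m/e}$, so the type $(0,\lambda)$ parabolics split into $e = |\rou_m:\rou_{m/e}|$ conjugacy classes. The one genuine obstacle I anticipate is showing that these $e$ classes all have the same size, so that $c$ equals $1/e$ times the count \eqref{eqn:orbit} taken with $n_0 = 0$. I would settle this as follows: the element $\hat\theta_\beta\in G(m,1,n)$ normalises $G = G(m,p,n)$, so conjugation by $\hat\theta_\beta$ is an automorphism of $G$ that permutes its conjugacy classes of parabolic subgroups while preserving their sizes; by \eqref{eqn:A}, and since the diagonal elements $\hat\theta_\alpha$ multiply in the first coordinate, this automorphism sends $P_{(0,\lambda)}^\alpha$ to $P_{(0,\lambda)}^{\alpha\beta}$, hence translates the class indexed by $\alpha$ to the one indexed by $\alpha\beta$. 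As $\beta$ ranges over $\rou_m$ these translations act transitively on the $e$ classes, forcing them to be equinumerous. Inserting the extra factor $e$ into $c$ and repeating the arithmetic of the previous paragraph (now with $|P| = \prod_{i=1}^{d} n_i!$) then yields $|N_G(P)/P| = (e/p)\prod_{i=1}^{n} m^{b_i}b_i!$.
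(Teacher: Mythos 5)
Your proposal is correct and follows essentially the same route as the paper: both deduce the index from $|G|$, $|P|$, the count \eqref{eqn:orbit} of Lemma~\ref{lemma:para}, and the conjugacy classification of the preceding theorem. The only difference is one of completeness: the paper compresses the $n_0=0$ case into ``combined with the previous theorem,'' whereas you explicitly verify (via conjugation by $\hat\theta_\beta\in G(m,1,n)$, which normalises $G(m,p,n)$) that the $e$ conjugacy classes of type-$(0,\lambda)$ parabolics are equinumerous --- a detail the paper's argument tacitly requires.
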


\begin{proof}
We have $|G(m,p,n)| = m^n n!/p$ and from \eqref{eqn:standard} we have
\[
  |P\,| = \begin{cases}
    \dfrac 1p m^{n_0} n_0!\prod_{i=1}^d n_i!&n_0 \ne 0\\[9pt]
    \prod_{i=1}^d n_i!&n_0 = 0.
  \end{cases}
\]
Thus the corollary follows from equation \eqref{eqn:orbit} combined with the
previous theorem.
\end{proof}

Observe that if $n_0 \ne 0$ or $p$ divides $n_i$ for all $i\ge 0$,
$|N_G(P)/P\,| = \prod_{i,b_i\ne 0}|G(m,1,b_i)|$. In this case we shall
see that the order coincidence can be lifted to an isomorphism of groups.

\begin{defn}
Suppose that $\lambda = (n_1,n_2,\dots,n_d) = 1^{b_1}\,2^{b_2}\cdots
n^{b_n}\vdash n-n_0$. The group $G(1,1,n_i)$ in the decomposition
\eqref{eqn:standard} of the standard parabolic subgroup $P =
P_{(n_0,\lambda)}$ of $G(m,p,n)$ acts on the subspace spanned by $\Lambda_i$
and fixes the sum $\Sigma\Lambda_i$ of the vectors in~$\Lambda_i$.

Given $k$ such that $b_k\ne 0$, let $M_k$ be the subspace spanned by the
vectors in the $b_k$ blocks of size $k$ of the partition $\Pi$ defining $P$.
Then $P$ acts on $M_k$ as the reflection group $P_k = G(1,1,k)^{b_k}$ and $P$
is the direct product of the $P_k$. We call the groups $P_k$ the
\emph{isotypic components} of $P$.

The vectors $\Sigma\Lambda_i$, where $\Lambda_i$ is a block of size $k$, form
a basis for $X_k = \Fix_{M_k}(P)$. The subspaces $M_k$ and $X_k$ are
invariant under the action of the normaliser of $P$ in $G(m,p,n)$.
\end{defn}

Let $\Gamma_1^k$, $\Gamma_2^k$, \dots, $\Gamma_{b_k}^k$ be the blocks of size
$k$ in the partition of $I$ corresponding to $\Pi$, labelled so that the
$j$th block is
\[
  \Gamma_j^k = \{i_{1,j}^k, i_{2,j}^k, \dots, i_{k,j}^k\}.
\]

The group $\Sym(b_k)\simeq G(1,1,b_k)$ is generated by transpositions and the
reflection $\sigma\in G(1,1,b_k)$ corresponding to the transposition $(r,s)$
induces
\begin{equation}\label{eqn:vtransp}
  v^{(k)}(r,s) = (\tau;\textbf1)\text{ in $G(1,1,n)$},
\end{equation}
where $\tau = (i_{1,r}^k,i_{1,s}^k)(i_{2,r}^k,i_{2,s}^k) \cdots
(i_{k,r}^k,i_{k,s}^k)$. Then $v^{(k)}(r,s)$ belongs to the normaliser of $P$
in $G(m,p,n)$ and acts on $X_k$ as the reflection $\sigma$.

Recall that in the paragraph preceding \eqref{eqn:gens} we chose a fixed
primitive $m$th root of unity $\zeta\in\rou_m$. Define $\theta_j^{(k)} :
\Gamma_j^k \to \rou_m$ by $\theta_j^{(k)}(g) = \zeta$ if $g\in \Gamma_j^k$
and $\theta_j^{(k)}(g) = 1$ otherwise.  Then
\begin{equation}\label{eqn:vcore}
  h^{(k)}(j) = (1;\theta_j^{(k)}) \in A(m,1,n)
\end{equation}
normalises $P$ and acts on $X_k$ as $(1;\theta_j)$, where $\theta_j :
\{1,2,\dots,b_k\} \to \rou_m$ sends $j$ to $\zeta$ and all other elements
to~1.

\smallskip
In effect we have $M_k = U\otimes W$, where $U$ is a vector space of
dimension $k$ and $W$ is a vector space of dimension $b_k$. The action of
$G(1,1,k)$ on $U$ extends naturally to an action of $G(1,1,k)^{b_k}$ on
$U\otimes W$, which is normalised by the action of $G(m,1,b_k)$ on $W$.

Recall that when $H$ is a complement to $P$ in $N_G(P)$, $H^\circ$ denotes
the subgroup generated by the elements of $H$ that act as reflections on
$\Fix_V(P)$.  The identification of $H^\circ$ as a reflection group in the
following theorem was previously obtained by Amend et 
al.~\cite[Proposition 4.7]{amend-etal:2016}.

\begin{thm}\label{thm:main-imprim}
Suppose that $G = G(m,p,n)$ and let $P = P_{(n_0,\lambda)}$.  Let $V$ be the
vector space on which $G$ acts and let $X_k$ be the fixed point space of the
$k$th isotypic component $P_k$ of $P$ acting on the space $M_k$ spanned by
$\Lambda_k$ as in Definition~\ref{defn:std}. If $b_k\ne 0$, let $\varphi_k$
be the restriction homomorphism $N_G(P) \to \GL(X_k)$. Then
\begin{enumerate}[(i)]
\item $N_G(P)$ has a complement $H$ to $P$.
\item If $n_0\ne 0$, then $\varphi_k(H) = G(m,1,b_k)$ and $H = H^\circ$
    acts on $\Fix_V(P) = \bigoplus_k X_k$ as the reflection group
    $\prod_{k,b_k\ne 0} G(m,1,b_k)$.
\item If $n_0 = 0$, then $\varphi_k(H^\circ) = G(m,p_k,b_k)$, where $p_k =
    p/\gcd(p,k)$ and $H^\circ$ acts faithfully on $\Fix_V(P)$ as $\prod_{k,
    b_k\ne 0}G(m,p_k,b_k)$. Furthermore, for all $k$ such that $b_k\ne 0$,
    $\varphi_k(H) = G(m,q_k,b_k)$ for some divisor $q_k$ of~$p_k$.
\end{enumerate}
\end{thm}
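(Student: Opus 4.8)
The plan is to route everything through the restriction homomorphism $\varphi=\bigoplus_{k}\varphi_k\colon N_G(P)\to\prod_{k}\GL(X_k)$. Since $P=G(U)$ is the pointwise stabiliser of $U=\Fix_V(P)=\bigoplus_k X_k$, the kernel of $\varphi$ is $N_G(P)\cap G(U)=P$, so $N_G(P)/P\cong\bar N:=\varphi(N_G(P))$ and a subgroup $H$ is a complement exactly when $\varphi|_H$ is injective with image $\bar N$; faithfulness of such an $H$ on $U$ is then automatic from $H\cap P=\ker(\varphi|_H)=1$. The first task is to pin down $\bar N$. One checks that every $g\in N_{G(m,1,n)}(P)$ is scalar on each block of $\Pi$ and permutes the blocks of each size, so modulo $P$ it acts on $X_k$ as a monomial transformation and $\bar N\subseteq\prod_k G(m,1,b_k)$. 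Let $\delta\colon G(m,1,n)\to\rou_m$ send a monomial map to the product of its non-zero entries, so that membership in $G(m,p,n)$ is the condition $\delta\in\rou_{m/p}$; reading $\delta$ off the blocks gives $\delta(g)=\delta_0(g)\prod_{k}\delta_k(g)^k$, where $\delta_k(g)=\zeta^{d_k}$ is the product of the entries of $\varphi_k(g)$ and $\delta_0$ is the contribution of $\Lambda_0$.

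When $n_0\ne0$ I would build a complement directly. Lift the transposition $(r,s)\in\Sym(b_k)$ to $v^{(k)}(r,s)$ of \eqref{eqn:vtransp} and the $j$th diagonal generator of $X_k$ to the block-scalar $h^{(k)}(j)$ of \eqref{eqn:vcore}, then correct the latter by the scalar on a fixed vector of $\Lambda_0$ that cancels $\delta$; the correction is additive in the exponents and symmetric under the block permutations, so these lifts generate a subgroup $H\cong\prod_k G(m,1,b_k)$ inside $G(m,p,n)\cap N_G(P)$ with $\varphi|_H$ injective. By Corollary~\ref{cor:orders} we have $\lvert\bar N\rvert=\prod_k m^{b_k}b_k!=\lvert\prod_k G(m,1,b_k)\rvert$, so $\bar N$ is the full product and $H$ is a complement, giving (i). Each reflection of $G(m,1,b_k)$ lifts to an element acting as a reflection on $U$ (the $\Lambda_0$-correction is invisible there), so these lifts lie in $H^\circ$ and generate $H$; hence $H=H^\circ$, $\varphi_k(H)=G(m,1,b_k)$, and $H$ acts faithfully as $\prod_k G(m,1,b_k)$, which is (ii).

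When $n_0=0$ there is no $\Lambda_0$ to absorb $\delta$. Here $P$ is already parabolic in $G(m,1,n)$, and the uncorrected lifts of the $v^{(k)}(r,s)$ and $h^{(k)}(j)$ generate a complement $\hat H\cong\prod_k G(m,1,b_k)$ of $P$ in $N_{G(m,1,n)}(P)=P\hat H$. I would then set $H=\hat H\cap G(m,p,n)$; the Dedekind modular law (applicable as $P\le G$) gives $N_G(P)=N_{G(m,1,n)}(P)\cap G=(P\hat H)\cap G=P\,H$, so $H$ is a complement, proving (i). Since $\delta(h^{(k)}(j))=\zeta^{k}$, the formula for $\delta$ shows $\bar N=\varphi(H)$ is the congruence subgroup of $\prod_k G(m,1,b_k)$ cut out by $p\mid\sum_k k\,d_k$, whose order $\tfrac{e}{p}\prod_k m^{b_k}b_k!$ agrees with Corollary~\ref{cor:orders}. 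Projecting this congruence to the $k$th factor yields $\varphi_k(H)=G(m,q_k,b_k)$, with $q_k$ determined by the remaining part sizes.

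To finish (iii) I would identify $H^\circ$ by listing the reflections of $U$ that lie in $H$. The twisted transpositions interchanging two blocks of size $k$ with an arbitrary twist $\omega\in\rou_m$ all have $\delta=1$, hence lie in $G(m,p,n)$ for every $\omega$ and generate $G(m,m,b_k)$ on $X_k$; the diagonal reflections scaling a single block of size $k$ by $\omega$ lie in $G(m,p,n)$ precisely when $\omega\in\rou_{m/p_k}$. Together these are exactly the reflections of $G(m,p_k,b_k)$, so $\varphi_k(H^\circ)=G(m,p_k,b_k)$; as each lifts to act trivially on the other $X_{k'}$, the images assemble to the full direct product and $H^\circ$ acts faithfully as $\prod_k G(m,p_k,b_k)$, forcing $q_k\mid p_k$. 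I expect the one genuinely delicate step to be this determinant bookkeeping—deciding exactly which twisted transpositions and block-scalars survive the condition $\delta\in\rou_{m/p}$—since it is precisely what distinguishes the full product of (ii) from the congruence subgroup and the smaller reflection groups $G(m,p_k,b_k)$ of (iii), with Corollary~\ref{cor:orders} serving throughout as the numerical check that each constructed section is a complement.
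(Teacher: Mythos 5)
Your proposal is correct and takes essentially the same approach as the paper: lifting the generators $v^{(k)}(r,s)$ and $h^{(k)}(j)$ to the normaliser, correcting by a scalar on $\Lambda_0$ when $n_0\ne 0$ (the paper's $t^{-k}h^{(k)}(j)$), intersecting the $G(m,1,n)$-complement with $G(m,p,n)$ when $n_0=0$, and using the same determinant bookkeeping to identify $\varphi_k(H^\circ)=G(m,p_k,b_k)$ and $\varphi_k(H)=G(m,q_k,b_k)$. The differences are purely presentational (explicit use of $\ker\varphi=P$, of the Dedekind modular law, and of the congruence-subgroup description of $\varphi(H)$, where the paper instead uses the normal subgroup $A=G\cap\tilde A$).
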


\begin{proof}
We begin by considering the case $p = 1$.

The transformations $v^{(k)}(r,s)$ and $h^{(k)}(j)$ defined in
\eqref{eqn:vtransp} and \eqref{eqn:vcore} are image of the generators of
$G(m,1,b_k)$ and thus for all $g\in G(m,1,b_k)$ there is an element $\hat
g\in G(m,1,n)$ that acts on $X_k$ as $g$ and fixes $M_\ell$ pointwise for all
$\ell\ne k$.  Thus $g\mapsto \hat g$ is a monomorphism from $G(m,1,b_k)$ into
the normaliser $N$ of $P = P_{(n_0,\lambda)}$ in $G(m,1,n)$ and
$\varphi_k(\hat g) = g$.

On comparing orders, using Corollary \ref{cor:orders}, it follows that the
subgroup
\[
  H = \{\,\hat g \mid g \in  \prod_{k,b_k\ne 0} G(m,1,b_k)\,\}
\]
is a complement to $P$ in $N$. Furthermore, $H$ acts on $\Fix_V(P)$ as the
reflection group $\prod_{k,b_k \ne 0} G(m,1,b_k)$.  This proves (i) and (ii)
for the case $p = 1$ and so from now on we may suppose that $p\ne 1$.

Suppose that $n_0\ne 0$. The reflections $v^{(k)}(r,s)$ of
\eqref{eqn:vtransp} belong to $G(1,1,n) \subset G(m,p,n)$.  Since $n_0 \ne 0$
we have $e_1\in\Lambda_0$ and therefore $t^{-k}h^{(k)}(j)\in
G(m,m,n)\subseteq G(m,p,n)$, where $t$ is the generator of $G(m,1,n)$ which
is used in \eqref{eqn:gens}. The elements $v^{(k)}(r,s)$ and
$t^{-k}h^{(k)}(j)$ belong to the normaliser of $P$ in $G(m,p,n)$ and we
define $H$ to be the subgroup generated by $v^{(k)}(r,s)$ and
$t^{-k}h^{(k)}(j)$ for all $k$ such that $b_k\ne 0$. Then $H$ is isomorphic
to $\prod_{k,b_k \ne 0} G(m,1,b_k)$ and $\varphi_k(H) = G(m,1,b_k)$. A
comparison of orders shows that $H$ is a parabolic complement.

For the remainder of the proof we may suppose that $p\ne 1$ and $n_0 = 0$.
Let $\tilde G = G(m,1,n)$ and let $X = \Fix_V(P)$. Because $n_0 = 0$, $\tilde
G(X)$ is the direct product of the same groups $G(1,1,k)$ defining $P$ and
therefore $\tilde G(X) = P$.  As shown above, the normaliser of $P$ in
$\tilde G$ has a complement $\tilde H$. Then $H = G\cap \tilde H$ is a
parabolic complement, completing the proof of (i).

We shall use the notation established in the first part of this proof. If $g
= (1;\theta)\in A(m,1,b_k)$ where $\theta : \{1,2,\dots,b_k\} \to \rou_m$,
then $\hat g\in A(m,1,n)$ belongs to $A(m,p,n)$ if and only if
\[
  \prod_{j=1}^{b_k} \theta(j)^{km/p} = 1.
\]
In this case $\phi_k(\hat g)\in A(m,p_k,b_k)$ where $p_k = m/\gcd(m,km/p) =
p/\gcd(p,k)$.

We have $\varphi_k(H^\circ) = G(m,p_k,b_k)$ and it follows that $H^\circ$
acts faithfully on $\Fix_V(P)$ as $\prod_{k=1}^n G(m,p_k,b_k)$. In addition,
$\tilde H = \prod_{k,b_k\ne 0} G(m,1,b_k)$ and $\tilde A = \prod_{k,b_k\ne 0}
A(m,1,b_k)$ is a normal subgroup of $\tilde H$. Thus $A = G\cap \tilde A$ is
a normal subgroup of $H$ and $\varphi_k(A)$ contains $A(m,m,b_k)$. It follows
that $\varphi_k(H) = G(m,q_k,b_k)$ for some divisor $q_k$ of $p_k$.
\end{proof}

\begin{example}
Suppose that $G = G(2,2,n)$; that is, $G$ is a Coxeter group of type $D_n$.
Let $\lambda = 1^{b_1}\,2^{b_2}\cdots n^{b_n} \vdash n$. The complement $H$
to $P = P_{(0,\lambda)}$ in its normaliser is generated by reflections in its
action on $\Fix_V(P)$ if and only if there is at most one value of $k$ such
that $k$ is odd and $b_k \ne 0$.
\end{example}

\begin{example}
Suppose that $p \ne 1$, $n_0 = 0$ and that $P$ has a single isotypic
component $G(1,1,k)^{b_k}$. Then $\lambda = k^{b_k}\vdash kb_k = n$ and the
subgroup $H^\circ$ of the parabolic complement is
isomorphic to $G(m,p/s,b)$, where $s = \gcd(p,k)$. It follows from Corollary
\ref{cor:orders} that $H^\circ$ is a parabolic complement for~$P$.
\end{example}

\begin{example}
Next suppose that $p \ne 1$, $n_0 = 0$ and $\lambda = 1^{b_1}\,k^{b_k}\vdash
b_1 + kb_k$. Once again $P$ is the direct product of $b_k$ copies of
$G(1,1,k)$. But now the subgroup $H^\circ$ of the parabolic complement $H$
is $H^\circ = H_1^\circ\times H_2^\circ$, where
$H_1^\circ\simeq G(m,p,b_1)$, $H_2^\circ\simeq G(m,p/s,b_k)$, where $s =
\gcd(p,k)$ and the index of $H^\circ$ in $H$ is~$p/s$.

Put $x = v^1(1)^s h^{(k)}(1)^\ell$ where $s = pr - k\ell$ for some $r$
and~$\ell$. The product of the diagonal entries of $x$ is $\zeta^{s+k\ell}$
and therefore $x\in G(m,p,n)$. By construction $x$ normalises $P$,
$H_1^\circ$ and $H_2^\circ$ and on comparing orders we see that $H =
\(H^\circ,x\)$.

The space of fixed points of $P$ has a decomposition $\Fix_V(P) = X_1\oplus
X_2$ corresponding to the isotypic components of $P$. The image of the
restriction of $H_1 = \(H_1^\circ,x\)$ to $X_1$ is the reflection group
$G(m,s,b_1)$.  Recall that $t = (1;\theta)$ is the reflection defined prior
to \eqref{eqn:gens} and that its determinant is $\zeta$.  We have $t^{p/s}\in
H_2^\circ$ and therefore the restriction of $H_2 = \(H_2^\circ,x\)$ to $X_2$
is the reflection group $G(m,1,b_k)$ because $(\zeta^\ell)^{k/s} =
\zeta^{(p/s)r}\zeta$.
\end{example}

\begin{example}
Suppose that $p \ne 1$, $n_0 = 0$ and $\lambda = k_1^{b_1}\,k_2^{b_2}\vdash
n$, where $n = k_1b_1 + k_2b_k$.  Then $P = P_1\times P_2$, where $P_i$ is
the direct product of $b_i$ copies of $G(1,1,k_i)$. The subgroup $H^\circ$ of
the parabolic complement $H$ is $H^\circ = H_1^\circ\times
H_2^\circ$, where $H_i^\circ\simeq G(m,p_i,b_i)$ and where $p_i =
p/\gcd(p,k_i)$. The index of $H^\circ$ in $H$ is
$p\gcd(p,k_1,k_2)/\gcd(p,k_1)\gcd(p,k_2)$.
\end{example}

\section{Root systems and parabolic subgroups of Coxeter type}
Let $P$ be a proper parabolic subgroup of the reflection group $G$ and let $X
= \Fix_V(P)$. Then $N_G(P) = G_X = G_Y$, where $Y = X^\perp$ is the
orthogonal complement of $X$ in $V$.  If $r\in N_G(P)$ is a reflection and
$a$ is a root of $r$, then $a\in X\cup Y$ and therefore the reflection
subgroup of $N_G(P)$ is $P\times Q$, where $Q$ is the parabolic subgroup
$G(Y)$. This symmetry between $P$ and $Q$ is echoed in the tables in
\S\ref{sec:tab}.

When $G$ is a real reflection group (namely a finite Coxeter group) a
parabolic complement to $P$ can be obtained as the setwise stabiliser of a
set of (simple) roots corresponding to reflections that generate $P$ (see
\cite[Cor.~3]{howlett:1980}). In this section we show that in many cases
there is a generalisation of this result to parabolic subgroups of unitary
reflection groups. Consequently, the complement contains the subgroup~$Q$.

However, there are unitary reflection groups which contain parabolic
subgroups $P$ such that $Q$ cannot be extended to a parabolic complement. 
Therefore, in these cases, even though it turns out that the
normaliser of $P$ is a semidirect product, a complement cannot be realised as
the stabiliser of a set of roots of $P$.

\medskip
Let $F$ be a finite abelian extension of $\Q$, let $A$ be the ring of integers
of $F$, let $\rou$ be the group of roots of unity in $A$ and let $U$ be a vector
space over $F$ with an hermitian product $(-,-)$ such that $V = \C\otimes_F U$.
An $A$-root system for the reflection group $G$, as defined in
\cite[Definition 1.43]{lehrer-taylor:2009}, is a pair $(\Sigma,f)$ such that
$\Sigma$ spans $U$, $0\notin \Sigma$, and $f : \Sigma \to \rou$. Furthermore,
$G$ is generated by the reflections $\{\,r_{a,f(a)} \mid a \in\Sigma\}$, and
\begin{enumerate}[(i)]
\item for all $a\in\Sigma$ and all $\lambda\in F$ we have $\lambda a\in\Sigma$
    if and only if $\lambda\in\rou$;
\item for all $a\in\Sigma$ and all $\lambda\in \rou$ we have $f(\lambda a) = 
    f(a) \ne 1$;
\item for all $a,b\in\Sigma$ we have $(1-f(b))(a,b)/(b,b)\in A$;
\item for all $a,b\in\Sigma$ we have $r_{a,f(a)}(b)\in\Sigma$ and 
    $f(r_{a,f(a)}(b)) = f(b)$.
\end{enumerate}

Let $\zeta_m$ denote a primitive $m$th root of unity and put $\rou_m = \(\zeta_m\)$.
A $\Z[\zeta_m]$-root system for $G(m,m,n)$ is $\Sigma(m,m,n) = \{\,\xi e_i - \eta e_j
\mid\text{$\xi,\eta\in\rou_m$ and $i\ne j$}\,\}$, where $f(a) = -1$ for all roots.
To obtain a root system $\Sigma(m,p,n)$ for $G(m,p,n)$ where $p \ne m$ include
the roots $\pm\xi e_i$ for $1\le i\le n$ and $\xi\in\rou_m$ and define $f(\pm\xi
e_i) = \zeta_m^p$. If $f(a) = -1$, let $r_a$ denote the reflection with root $a$.
See \cite{lehrer-taylor:2009} for descriptions of root systems for the primitive
reflection groups.

\begin{defn}
A parabolic subgroup $P$ of $G$ is of \emph{Coxeter type} $X$ if it is
generated by a set $S$ of reflections such that $(P,S)$ is a Coxeter system
and $X$ is the standard name of $(P,S)$. The standard names are described
in~\S\ref{sec:tab}.
\end{defn}

Suppose that $P$ is a parabolic subgroup of Coxeter type in the reflection
group $G$ and that $\Sigma$ is an $A$-root system for $G$.  Let $\Sigma_P$ be
the set of roots in $\Sigma$ corresponding to the reflections in $P$.   Let
$\Phi$ denote the root system of $P$ as in \cite{humphreys:1990}.  At this
point we work with the following hypotheses.

\begin{hyp}
The root system $\Phi$ is a subset of $\Sigma_P$.
\end{hyp}

It is clear that it is possible to choose a root system $\Phi$ for the
standard parabolic subgroup $P_{(0,\lambda)}$ of $G(m,p,n)$ so that Hypothesis~1
is satisfied.  Later we shall show that it holds for many other groups.

\begin{hyp} The group of roots of unity of $A$ is $\rou = \(-1\)\times
\(\gamma\)$ where the order $h$ of $\gamma$ is odd.
\end{hyp}

Let $\Delta$ be a set of simple roots in $\Phi$ and let $\Phi^+$ be the
corresponding set of positive roots. Define
\begin{equation}\label{eqn:extend}
  \widetilde\Delta = \bigcup_{i=0}^{h-1}\gamma^i\Delta,
  \quad\widetilde\Phi^+ = \bigcup_{i=0}^{h-1}\gamma^i\Phi^+
  \quad\text{and}\quad\widetilde\Phi^- = -\widetilde\Phi^+.
\end{equation}

\begin{lemma}\label{lemma:length}
For $g\in N_G(P)$, define $N(g) = \{\,a\in\Phi^+\mid g(a)\in
\widetilde\Phi^-\,\}$. For $a\in\Delta$, if $g(a)\in\widetilde\Phi^-$, then
$r_a N(gr_a) = N(g)\setminus \{a\}$, whereas if $g(a)\in\widetilde\Phi^+$,
then $N(gr_a) = r_aN(g)\mathbin{\dot\cup}\{a\}$.
\end{lemma}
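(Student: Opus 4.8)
The plan is to imitate the standard computation of the ``$N$-function'' recursion for Coxeter groups (as in \cite{humphreys:1990}), adapting it to the twisted positive system $\widetilde\Phi^+$. The engine of the argument is the classical fact that a simple reflection barely disturbs the positive roots: for $a\in\Delta$ one has $r_a(a) = -a$ while $r_a$ permutes $\Phi^+\setminus\{a\}$. I would begin by recording this fact (it holds because $\Phi$ is a genuine Coxeter root system for $P$ and, under Hypothesis~1, $r_a = r_{a,-1}$ is the order-$2$ reflection attached to $a$), and then analyse $N(gr_a) = \{\,b\in\Phi^+ : g(r_a(b))\in\widetilde\Phi^-\,\}$ by splitting $\Phi^+$ into $\{a\}$ and $\Phi^+\setminus\{a\}$.

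Before the computation I would establish two facts about $\widetilde\Phi^{\pm}$. First, directly from \eqref{eqn:extend} we have $\widetilde\Phi^- = -\widetilde\Phi^+$, so for any root $c$ one has $-c\in\widetilde\Phi^-$ if and only if $c\in\widetilde\Phi^+$. Second---and this is the one genuinely non-formal point---I would show that $\widetilde\Phi^+$ and $\widetilde\Phi^-$ are disjoint. This is exactly where Hypothesis~2 is used: if $\gamma^i b = -\gamma^j c$ with $b,c\in\Phi^+$, then $b = -\gamma^{\,j-i}c$ lies in $\rou c$, so $b=\pm c$ because $\Phi$ is reduced and its intersection with each reflection line is $\{\pm c\}$; the case $b=-c$ is impossible since $b,c\in\Phi^+$, and the case $b=c$ forces $\gamma^{\,j-i}=-1$, which cannot happen because $\(\gamma\)$ has odd order $h$ and hence does not contain $-1$. (It is worth noting in passing that since $g\in N_G(P)$ permutes $\Sigma_P = \widetilde\Phi^+\sqcup\widetilde\Phi^-$, the alternative $g(a)\in\widetilde\Phi^+$ versus $g(a)\in\widetilde\Phi^-$ is genuinely exhaustive, so the two cases of the lemma cover every $g$.)

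With these in hand the computation is bookkeeping. For $b=a$: $g(r_a(a)) = -g(a)$, so $a\in N(gr_a)$ if and only if $g(a)\in\widetilde\Phi^+$. For $b\in\Phi^+\setminus\{a\}$: writing $b' = r_a(b)\in\Phi^+\setminus\{a\}$, membership $b\in N(gr_a)$ is equivalent to $b'\in N(g)$; since $r_a$ is an involution permuting $\Phi^+\setminus\{a\}$, this yields $r_a\big(N(gr_a)\setminus\{a\}\big) = N(g)\setminus\{a\}$. Now I distinguish the two cases using disjointness. If $g(a)\in\widetilde\Phi^-$ then $a\in N(g)$ but $a\notin N(gr_a)$, so $N(gr_a) = N(gr_a)\setminus\{a\}$ and applying $r_a$ gives $r_a N(gr_a) = N(g)\setminus\{a\}$. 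If instead $g(a)\in\widetilde\Phi^+$ then $a\in N(gr_a)$ but $a\notin N(g)$, whence $N(gr_a)\setminus\{a\} = r_a\big(N(g)\big)$; since $r_a(a)=-a\notin\Phi^+$ we have $a\notin r_aN(g)$, so the union $N(gr_a) = r_aN(g)\mathbin{\dot\cup}\{a\}$ is disjoint, as claimed.

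The only real obstacle is the disjointness of $\widetilde\Phi^+$ and $\widetilde\Phi^-$; everything else is the classical simple-reflection recursion. I would therefore spend the most care on that step, making sure the oddness of $h$ (equivalently $-1\notin\(\gamma\)$) is invoked correctly and that the reducedness of $\Phi$ together with root-system axiom~(i)---which pins the $\Sigma$-roots on each line down to $\rou c$---is used to force $b=\pm c$.
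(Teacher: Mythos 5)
Your proof is correct and follows essentially the same route as the paper: the paper's proof is a one-line appeal to the classical fact that $r_a$ permutes $\Phi^+\setminus\{a\}$ (citing Humphreys, Proposition~1.4), which is precisely the engine of your computation. The only difference is that you write out the bookkeeping and make explicit the disjointness of $\widetilde\Phi^+$ and $\widetilde\Phi^-$ via Hypothesis~2 (oddness of $h$, so $-1\notin\(\gamma\)$), a point the paper's terse proof leaves implicit but which is indeed needed for the case split to be exclusive.
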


\begin{proof}
This is a direct consequence of the well-known fact \cite[Proposition
1.4]{humphreys:1990} that for Coxeter groups
$r_a(\Phi^+\setminus\{a\}) = \Phi^+\setminus\{a\}$.
\end{proof}

The proof of the following theorem is based on \cite[Lemma 2 and Corollary
3]{howlett:1980}.

\begin{thm}\label{thm:paracomp}
Suppose that $P$ is a parabolic subgroup of Coxeter type in the reflection
group $G$ and that $G$ has an $A$-root system such that Hypotheses~1 and 2
hold. Then $G_{\tilde\Delta} = \{\,g\in G\mid g(\widetilde\Delta) =
\widetilde\Delta\,\}$ is a complement to $P$ in its normaliser in $G$.
\end{thm}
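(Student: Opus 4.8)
The plan is to adapt Howlett's argument \cite{howlett:1980}, using the function $N(\cdot)$ of Lemma~\ref{lemma:length} as a length function to drive a descent. Set $N:=G_{\widetilde\Delta}$. Since $P$ is normal in $N_G(P)$, it suffices to establish three things: $N\subseteq N_G(P)$, $P\cap N=1$, and $PN=N_G(P)$. The first is immediate: by the root-system axioms a $\rou$-multiple of a root determines the same reflection and $f$ is constant on $\rou$-orbits, so the reflections with roots in $\widetilde\Delta=\bigcup_i\gamma^i\Delta$ are exactly the simple reflections generating $P$; hence any $g$ with $g(\widetilde\Delta)=\widetilde\Delta$ permutes these generators by conjugation and normalises $P$. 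I would also record that each $g\in N_G(P)$ permutes $\Sigma_P$, which by Hypothesis~1 and the axioms equals $\widetilde\Phi:=\widetilde\Phi^+\cup\widetilde\Phi^-$, and that Hypothesis~2 makes this union disjoint and yields $\Phi\cap\widetilde\Phi^\pm=\Phi^\pm$ and $\Phi\cap\widetilde\Delta=\Delta$, because $\gamma$ has odd order and so $\langle\gamma\rangle\cap\{\pm1\}=\{1\}$.

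The crux is a reduction of an arbitrary $g\in N_G(P)$ to an automorphism of the root system $\Phi$. For $a\in\Phi$ write $g(a)=\gamma^{i_a}c_a$ with $c_a\in\Phi$. Unitarity gives $(ga,gb)=(a,b)\in\R$, hence $\gamma^{\,i_a-i_b}(c_a,c_b)\in\R$; when $a,b$ are non-orthogonal this forces $\gamma^{\,i_a-i_b}=1$ by Hypothesis~2. As $\Phi$ is an orthogonal union of irreducible root systems, the exponent $i_a$ is therefore constant on each component, so rescaling each component by the appropriate power of the scalar $\gamma$ turns $g$ into a genuine automorphism $g''\in\mathrm{Aut}(\Phi)$; moreover $N(g)=\{\,a\in\Phi^+: g''(a)\in\Phi^-\,\}$ is precisely the inversion set of $g''$. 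This is what lets the classical Coxeter dichotomy apply verbatim: if $g(a)\in\widetilde\Phi^+$ for every $a\in\Delta$ then, expanding any $b\in\Phi^+$ as a non-negative combination of simple roots from a single component, one finds $g(b)\in\widetilde\Phi^+$, so $N(g)=\emptyset$; contrapositively $N(g)\ne\emptyset$ forces some simple root into $N(g)$.

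With this descent step Lemma~\ref{lemma:length} gives the rest by induction on $|N(g)|$: from $g$ I repeatedly pass to $gr_a$ with $a\in\Delta\cap N(g)$, lowering $|N(\cdot)|$ by one, until reaching $h\in gP$ with $N(h)=\emptyset$. The same reduction identifies $N(h)=\emptyset$ with $h''$ preserving $\Delta$, equivalently with $h(\widetilde\Delta)=\widetilde\Delta$, so $h\in N$ and $g\in PN$. For the trivial intersection, if $g\in P$ satisfies $g(\widetilde\Delta)=\widetilde\Delta$ then $g(\Delta)\subseteq\widetilde\Delta\cap\Phi=\Delta$, and the only element of the Coxeter group $P$ preserving its simple system is the identity. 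Existence together with $P\cap N=1$ shows each coset $gP$ meets $N$ in exactly one point, so $N_G(P)=P\sdprod N$.

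The step I expect to be the main obstacle is this reduction to $\mathrm{Aut}(\Phi)$---specifically the constancy of the exponent $i_a$ on each irreducible component. Once the scalars $\gamma^{i_a}$ introduced by elements of $N_G(P)\setminus P$ have been stripped off, everything reduces to standard Coxeter combinatorics; controlling those scalars against the real positivity of $\Phi$ is exactly where Hypothesis~2 is needed, since the oddness of $h$ is what cleanly separates $-1$ from $\langle\gamma\rangle$.
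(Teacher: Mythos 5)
Your proposal is correct and follows essentially the same route as the paper's proof: both adapt Howlett's argument, using the function $N(\cdot)$ of Lemma~\ref{lemma:length} to drive a descent to zero-length coset representatives, the oddness of $h$ to keep $\widetilde\Phi^+$ and $\widetilde\Phi^-$ disjoint, and the constancy of the $\gamma$-exponent on irreducible components (your unitarity computation is precisely the ``connectedness argument'' the paper leaves implicit) to identify zero-length elements with stabilisers of $\widetilde\Delta$. The only organisational difference is that you work directly with the subgroup $G_{\tilde\Delta}$, whereas the paper first defines $H_0 = \{\,g\in N_G(P)\mid \ell(g)=0\,\}$, proves separately that $H_0$ is a subgroup, and then identifies $H_0 = G_{\tilde\Delta}$.
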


\begin{proof}
If $g\in N_G(P)$ and $a\in \Sigma_P$, then $gr_ag^{-1} = r_{g(a)}$ and
$g(a)\in\Sigma_P = \widetilde\Phi^+\cup \widetilde\Phi^-$. Define $\ell(g) =
|N(g)|$ and set $H_0 = \{\,g \in N_G(P) \mid \ell(g) = 0\,\}$. For $g\in P$,
$\ell(g)$ coincides with the usual length function for Coxeter groups and
therefore $P\cap H_0 = 1$.

For each coset of $P$ in $N_G(P)$ choose an element $g$ in the coset such
that $\ell(g)$ is minimal.  If $g(a)\in\widetilde\Phi^-$ for some
$a\in\Delta$, then from Lemma \ref{lemma:length} $\ell(gr_a) = \ell(g) - 1$,
contradicting the choice of $g$.  Thus $g(a)\in\widetilde\Phi^+$ for all
$a\in\Delta$.

The set $\Phi^+$ is the disjoint union of pairwise orthogonal irreducible
components $\Phi_1^+$, $\Phi_2^+$, \dots,~$\Phi_k^+$ and $\widetilde\Phi^+$
is the disjoint union of the sets $\gamma^j\Phi_i^+$ for $1\le j\le h$ and
$1\le i\le k$. If $a,b\in \Delta$ and $(a,b)\ne 0$, then
$g(\Delta)\subseteq\widetilde\Phi^+$ implies $g(a), g(b)\in\gamma^j\Phi_i^+$
for some $i$ and $j$. A connectedness argument now shows that
$g(\Delta\cap\Phi_i^+)\subseteq\gamma^j\Phi_i^+$. Every element of
$a\in\Phi_i^+$ is a positive linear combination of the roots in
$\Delta\cap\Phi_i^+$ and so $g(\Delta)\subseteq\widetilde\Phi^+$ implies
$g(\widetilde\Phi^+) = \widetilde\Phi^+$. Therefore $g\in H_0$.

To prove that $H_0$ is a subgroup we show that $h_1^{-1}h_2\in H_0$ for all
$h_1,h_2\in H_0$.  By way of contradiction suppose that
$h_1^{-1}h_2(a)\in\widetilde\Phi^-$ for some $a\in\Phi^+$. Then
$h_1^{-1}h_2(a) = \lambda b$ for some $b\in \widetilde\Phi^+$, where
$-\lambda\in\(\gamma\)$. But then $h_2(a) = \lambda h_1(b)$ and $h_1(b),
h_2(a) \in \widetilde\Phi^+$. This contradiction completes the proof that
$H_0$ is a subgroup.

If $g\in G_{\tilde\Delta}$, then $g\in N_G(P)$ and $g(\widetilde\Delta) =
\widetilde\Delta$ whence $\ell(g) = 0$ and so $g\in H_0$.  Conversely, by
definition, if $g\in H_0$ and $a\in\Phi^+$, then $g(a)\in\widetilde\Phi^+$
and it follows that $g(\widetilde\Phi^+) = \widetilde\Phi^+$. Then
$\widetilde\Delta\cap\gamma^j\Phi_i^+$ is the set roots in $\gamma^j\Phi_i^+$
that cannot be written as a nontrivial positive combination of other roots in
$\gamma^j\Phi_i^+$.  Therefore $g(\widetilde\Delta) = \widetilde\Delta$ and
hence $H_0 =  G_{\tilde\Delta}$.
\end{proof}

\begin{cor}
Suppose that $P$ is the standard parabolic subgroup $P_{(0,\lambda)}$ in $G =
G(m,p,n)$ and that $m$ is odd or twice an odd integer. Then $N_G(P)$ is the
semidirect product of $P$ by the stabiliser in $G$ of a set of roots of~$P$.
\end{cor}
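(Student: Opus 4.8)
The plan is to deduce this corollary directly from Theorem~\ref{thm:paracomp} by verifying its two hypotheses for $P = P_{(0,\lambda)}$. First I would check that $P$ is of Coxeter type. Since $n_0 = 0$, equation~\eqref{eqn:standard} gives $P = \prod_{i=1}^{d} G(1,1,n_i)$, a direct product of symmetric groups, so $(P,S)$ is a Coxeter system of type $A_{n_1-1}\times\cdots\times A_{n_d-1}$ for the obvious set $S$ of transpositions. I would take the $\Z[\zeta_m]$-root system $\Sigma(m,p,n)$ for $G$. The reflections of $P$ are transpositions within the blocks $\Lambda_i$, with roots of the form $e_a - e_b$ (and $f = -1$) lying in $\Sigma_P$, so the root system $\Phi$ of type $A$ is contained in $\Sigma_P$ and Hypothesis~1 holds, exactly as remarked after its statement.

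The heart of the argument is Hypothesis~2, which concerns the group $\rou$ of roots of unity of $A = \Z[\zeta_m]$. The key step is the standard computation of the torsion units of the cyclotomic field $F = \Q(\zeta_m)$: they form $\rou_{2m}$ when $m$ is odd (because $\Q(\zeta_m) = \Q(\zeta_{2m})$) and $\rou_m$ when $m$ is even. I would then split off the factor $\(-1\)$ in the two admissible cases. When $m$ is odd, $\gcd(2,m) = 1$ yields $\rou = \rou_{2m} = \(-1\)\times\(\zeta_m\)$ with $\zeta_m$ of odd order $m$; when $m = 2m'$ with $m'$ odd, $\rou = \rou_m = \rou_{2m'} = \(-1\)\times\(\gamma\)$ with $\gamma$ of odd order $m'$. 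In both cases the complementary cyclic factor has odd order, so Hypothesis~2 is satisfied; conversely the splitting fails precisely when $4\mid m$, since then the $2$-part of the cyclic group $\rou_m$ has order at least $4$, which explains the restriction on $m$.

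With Hypotheses~1 and~2 established, Theorem~\ref{thm:paracomp} applies and shows that $G_{\tilde\Delta}$ is a complement to $P$ in $N_G(P)$. To finish I would note that $\widetilde\Delta = \bigcup_{i=0}^{h-1}\gamma^i\Delta$ is genuinely a set of roots of $P$: by root-system axiom~(i) each $\gamma^i a$ lies in $\Sigma_P$ and is a root of the same reflection $r_a\in P$. Hence $N_G(P) = P\sdprod G_{\tilde\Delta}$ is the semidirect product of $P$ by the stabiliser in $G$ of a set of roots of $P$, as claimed.

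The main obstacle I anticipate is purely the cyclotomic computation of the second paragraph—determining exactly when the torsion units of $\Z[\zeta_m]$ decompose as $\(-1\)$ times an odd-order cyclic group—since the rest of the proof reduces mechanically to an application of Theorem~\ref{thm:paracomp}.
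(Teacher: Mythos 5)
Your proposal is correct and follows essentially the same route as the paper's proof: verify that $P_{(0,\lambda)}$ is of Coxeter type with simple roots of the form $e_{i-1}-e_i$ (Hypothesis~1, as the paper notes immediately after stating that hypothesis), check that the condition on $m$ forces the roots of unity of $\Z[\zeta_m]$ to split as $\(-1\)\times\(\gamma\)$ with $\gamma$ of odd order (Hypothesis~2), and then invoke Theorem~\ref{thm:paracomp}. The only difference is that you spell out the cyclotomic-unit computation and the observation that $\widetilde\Delta$ consists of roots of $P$, details the paper leaves implicit.
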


\begin{proof}
The condition on $m$ ensures that Hypothesis 2 is satisfied. The parabolic
subgroup $P$ is of Coxeter type and for the simple roots we may take vectors
$e_{i-1}-e_i$ for suitable values of~$i$.  Thus Hypothesis 1 is satisfied and
therefore the stabiliser of $\widetilde\Delta$ is a parabolic complement.
\end{proof}

Next consider the standard parabolic subgroup $P = P_{(n_0,\lambda)}$, where
$n_0 \ne 0$. We may write $P = P_0\times P_1$, where $P_0 = G(m,p,n_0)$ and
where $P_1$ is the parabolic subgroup $P_{(0,\lambda)}$ in $G_1 =
G(m,p,n-n_0)$. Assume Hypothesis 2 and let $\widetilde\Delta$ be the roots of
$P_1$ defined in \eqref{eqn:extend}. If $\lambda = (n_1,\dots,n_d)$ and $e =
\gcd(p,n_1,\dots,n_d)$, let $\theta = \zeta_m^{me/p}$ and set
\[
  \Delta_0 = \{\, \theta^j e_1 - e_2,\theta^j e_1 - \zeta_m e_2
   \mid 0\le j < p/e \,\} \cup \{\, e_{i-1}-e_i \mid 3\le i \le n_0 \,\}.
\]
If $p\ne m$, adjoin the roots $\{\, \theta^j e_1 \mid  0\le j < p/e \,\} \cup
\{\,e_i \mid 2\le i\le n_0\,\}$ to $\Delta_0$. The reflections whose roots belong
to $\Delta_0\cup\widetilde\Delta$ generate $P$.

\begin{thm}
Suppose that Hypothesis 2 holds and that $P$ is the standard
parabolic subgroup $P_{(n_0,\lambda)}$ in $G = G(m,p,n)$ where $n_0 \ne 0$,
$\lambda = (n_1,\dots,n_d)$ and $e = \gcd(p,n_1,\dots,n_d)$. If $m/p$ and
$p/e$ are coprime, $N_G(P)$ is the semidirect product of $P$ by the setwise
stabiliser of $\Delta_0\cup\widetilde\Delta$ in $G$.
\end{thm}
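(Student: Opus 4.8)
The plan is to show directly that the setwise stabiliser $S = G_{\Delta_0\cup\widetilde\Delta}$ has the three defining properties of a complement: $S\le N_G(P)$, $S\cap P = 1$, and $|S| = |N_G(P)/P|$. Writing $P = P_0\times P_1$ with $P_0 = G(m,p,n_0)$ on $\langle\Lambda_0\rangle$ and $P_1 = P_{(0,\lambda)}$ on $\langle\Lambda_0\rangle^\perp$, I recall from Corollary~\ref{cor:orders} that $|N_G(P)/P| = \prod_k m^{b_k}b_k!$, which is the order of $\prod_{k,\,b_k\ne0}G(m,1,b_k)$. Letting $\varphi\colon N_G(P)\to\GL(\Fix_V(P))$ be restriction, Theorem~\ref{thm:main-imprim}(ii) tells us its kernel is $P$ and its image is $\prod_k G(m,1,b_k)$; since $\varphi$ is injective on any subgroup meeting $P$ trivially, the whole statement reduces to proving $S\cap P = 1$ together with $\varphi(S) = \prod_k G(m,1,b_k)$.

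The first two properties are the routine part. Because the reflections with roots in $\Delta_0\cup\widetilde\Delta$ generate $P$ and every $g\in S$ permutes these roots while preserving the labelling $f$, conjugation by $g$ permutes the generating reflections, so $S\le N_G(P)$. For $S\cap P = 1$ I would first note that $g\in S$ respects the orthogonal splitting $\langle\Lambda_0\rangle\oplus\langle\Lambda_0\rangle^\perp$ — forced, when $p\ne m$, by the order-$m/p$ roots of $\Delta_0$, with the remaining case handled the same way — so $g = g_0\oplus g_1$. An element of $P_1$ fixing $\widetilde\Delta$ has $N(g_1) = \emptyset$ in the sense of Lemma~\ref{lemma:length}, hence equals $1$; and an element of $P_0$ fixing $\Delta_0$ can only scale $e_1$ by a power of $\theta$, while under the coprimality hypothesis no nontrivial such scalar lies in $G(m,p,n_0)$.

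The essential step is $\varphi(S) = \prod_k G(m,1,b_k)$. The permutation factors are immediate: each block transposition $v^{(k)}(r,s)$ of \eqref{eqn:vtransp} fixes $\Delta_0$ and permutes $\widetilde\Delta$, so $\prod_k\Sym(b_k)\subseteq\varphi(S)$. For the diagonal factors $\prod_k A(m,1,b_k)$, given a target scalar $\zeta_m^a$ on one size-$k$ block I would scale that block uniformly by $\zeta_m^a$, inserting the longest element of the block's symmetric group as a flip whenever $\zeta_m^a\notin\langle\gamma\rangle$. Hypothesis~2 ensures $\zeta_m^a\in\langle\gamma\rangle\cup(-\langle\gamma\rangle)$, and the flip cancels the sign contributed by $-1$, so that $g(\widetilde\Delta) = \widetilde\Delta$; on $\Fix_V(P)$ this acts as $\zeta_m^a$ on the chosen coordinate of $X_k$ and contributes $\zeta_m^{ka}$ to the product $\delta(g)$ of diagonal entries. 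Finally I would correct $\delta(g)$ into $\rou_{m/p}$ — the condition for lying in $G(m,p,n)$ — by composing with a power $\theta^c$, which fixes $\Fix_V(P)$ pointwise and changes the exponent of $\delta(g)$ by a multiple of $me/p$.

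The decisive calculation, and the only place the hypothesis is used, is the solvability of this correction. Membership in $G(m,p,n)$ forces the total exponent of $\delta(g)$ to be divisible by $p$, so the block contribution must be cancellable modulo $p$ using multiples of $me/p$; this succeeds exactly when the block contribution is divisible by $\gcd(p,me/p) = e\gcd(m/p,p/e)$, which equals $e$ precisely because $m/p$ and $p/e$ are coprime. Since $e = \gcd(p,n_1,\dots,n_d)$ divides every block size $k$, each contribution $ka$ is already divisible by $e$, so every target in $\prod_k A(m,1,b_k)$ is attained and $\varphi(S)$ is the full product; an order count then yields $N_G(P) = P\rtimes S$. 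I expect the main obstacle to be exactly this bookkeeping: arranging the uniform scalings, the $\langle\gamma\rangle$-versus-$(-\langle\gamma\rangle)$ flips coming from Hypothesis~2, and the $\theta^c$-corrections to be simultaneously compatible with both $\Delta_0$ and $\widetilde\Delta$, and thereby isolating $\gcd(p,me/p) = e$ as the obstruction that coprimality removes.
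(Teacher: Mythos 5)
Your proposal is correct and is in essence the paper's own argument: the decisive devices coincide, namely block scalings by $\pm\gamma^i$ made compatible with $\widetilde\Delta$ by composing with the longest element of the block (exactly the paper's matrices with $\zeta$ along the anti-diagonal), correction by a power of $\theta$ on $e_1$, and the arithmetic $\gcd(me/p,\,p)=e\gcd(m/p,\,p/e)=e$ combined with $e\mid k$, which is precisely where the coprimality hypothesis does its work in both proofs. The only difference is bookkeeping: the paper starts from the complement $H_1$ of $P_1$ supplied by Theorem \ref{thm:paracomp}, notes that the index of $PH_1$ in $N_G(P)$ is $p/e$ by Corollary \ref{cor:orders}, and adjoins a single element $g$ with eigenvalue $\theta$ on $e_1$, whereas you obtain the same count by showing that the setwise stabiliser surjects onto $\varphi(N_G(P))=\prod_k G(m,1,b_k)$ (identified via Theorem \ref{thm:main-imprim}(ii)) one generator at a time.
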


\begin{proof}
In the notation established above we have $N_{G_1}(P_1) = P_1H_1$, where
$H_1$ stabilises $\widetilde\Delta$ and fixes every root in $\Delta_0$.
It follows from Corollary \ref{cor:orders} that the index of $PH_1$ in
$N_G(P)$ is $p/e$. If $p = e$, then $H_1 = G_{\Delta_0\cup\widetilde\Delta}$
is a complement to $P$ in $N_G(P)$.

If $p\ne e$, let $H = G_{\Delta_0 \cup\tilde\Delta}$ and suppose that $m/p$
and $p/e$ are coprime. Then $H_1\subseteq H\subseteq N_G(P)$ and $H\cap P = 1$.
It follows from the proof of Theorem \ref{thm:main-imprim} that $N_G(P)$
contains an element $g$ such that $e_1$ is an eigenvector of $g$ with
eigenvalue $\theta = \zeta_m^{me/p}$ and $g(e_i) = e_i$ for $2\le i\le n_0$. The
construction of $g$ in the proof of Theorem \ref{thm:main-imprim} utilised
the elements $h^{(k)}(j)$, which act on the blocks $ \Gamma_j^k$ of
size $k$ as scalar matrices $\zeta I_k$.  However, if $m$ is even, the
construction needs to be modified so that $g$ fixes $\widetilde\Delta$.
This is achieved by replacing the scalar matrix $\zeta I_k$ by the matrix with
$\zeta$ along the anti-diagonal.  Then $g$ fixes $\Delta_0 \cup\tilde\Delta$
and since $m/p$ and $p/e$ are coprime, $H = \(H_1,g\)$ and $N_G(P) = PH$.
\end{proof}

\section{The primitive unitary reflection groups}\label{sec:primitive}
From now on suppose $G$ is a finite primitive unitary reflection group of
rank at least 2 acting on $V$.

\subsection{Parabolic subgroups of rank 1}
Throughout this section $P$ denotes a rank 1 parabolic subgroup of $G$. Thus
$P = \(r\)$, where $r$ is a reflection of order $k$, where $k$ is 2, 3, 4 or
5 and $r$ is not the square of an element of order 4. Reflections of orders 4
or 5 occur only when the rank of $G$ is~2.

\begin{lemma}
If $P$ is a parabolic subgroup of $G$ of rank 1, then $N_G(P) = C_G(P)$.
\end{lemma}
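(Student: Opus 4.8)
The inclusion $C_G(P)\subseteq N_G(P)$ is automatic, so the entire content is the reverse inclusion $N_G(P)\subseteq C_G(P)$. The plan is to take an arbitrary $g\in N_G(P)$ and show that it centralises the generator $r$. Since $P=\(r\)$ is cyclic of order $k$ and $g$ normalises $P$, we have $grg^{-1}\in P$, so $grg^{-1}=r^j$ for some $j$ with $0< j< k$ (the exponent is nonzero because $grg^{-1}\ne1$). Equivalently, $g$ induces the automorphism $r\mapsto r^j$ of the cyclic group $\(r\)$, and it suffices to prove that this automorphism is trivial, i.e.\ that $j\equiv1\pmod k$.

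The key observation is that conjugation by the invertible map $g$ is a similarity transformation and therefore preserves eigenvalues: $grg^{-1}$ and $r$ have the same characteristic polynomial. Writing $r=r_{a,\zeta}$ with $\zeta$ a primitive $k$th root of unity, the transformation $r$ acts as multiplication by $\zeta$ on the line $\C a$ and as the identity on $a^\perp$, so its characteristic polynomial is $(x-\zeta)(x-1)^{n-1}$ with $n=\dim V$. On the other hand $r^j=r_{a,\zeta^j}$ has characteristic polynomial $(x-\zeta^j)(x-1)^{n-1}$. Equating the characteristic polynomials of $grg^{-1}$ and $r^j$ forces $\zeta^j=\zeta$.

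From $\zeta^j=\zeta$ we conclude $r^j=r$, so $grg^{-1}=r$ and hence $g\in C_G(P)$; this gives $N_G(P)\subseteq C_G(P)$ and the stated equality. The main point to verify carefully is that the nontrivial eigenvalue $\zeta$ is a complete invariant here, which is immediate once one notes that $grg^{-1}$ has the same order $k$ as $r$, so $r^j$ also has order $k$ and in particular $\zeta^j\ne1$; thus the single nontrivial eigenvalue of $r^j$ really is $\zeta^j$ and cannot be absorbed into the trivial eigenspace. I expect no serious obstacle: the argument is uniform across the admissible values $k\in\{2,3,4,5\}$ and does not use primitivity, rank, or the hypothesis that $r$ is not the square of an element of order $4$—these merely fix the ambient setting of this section rather than entering the proof.
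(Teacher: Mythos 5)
Your proof is correct and takes essentially the same route as the paper: the paper writes $g^{-1}rg = r^\ell$, applies both sides to a root $a$ of $r$ so that $ga$ becomes an eigenvector of $r$ with eigenvalue $\lambda^\ell$, and concludes $\ell = 1$ --- the same comparison of the nontrivial eigenvalue of $r$ with that of its conjugate that you carry out via characteristic polynomials. There is no gap.
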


\begin{proof}
If $g\in N_G(P)$ and $P = \(r\)$, then $g^{-1}rg = r^\ell$ for some $\ell$.
If $a$ is a root of $r$, then $ra = \lambda a$ for some root of unity
$\lambda\ne 1$. Then $rg a = \lambda^\ell ga$ and it follows that $\ell = 1$.
Thus $g\in C_G(r) = C_G(P)$.  The converse is obvious.
\end{proof}

Our first approach to finding a complement to $P = \(r\)$ in its normaliser
is to characterise those rank 1 parabolic subgroups whose normaliser is the
semidirect product of the parabolic by the stabiliser of a set of roots.

To this end, let $a$ be a root of the reflection $r$, let $k$ be the order of
$r$ and set
\begin{equation}\label{eqn:rou}
  \rou = \{\, \xi\in\C \mid\text{$ga = \xi a$ for some $g\in N_G(P)$}\,\}
  \quad\text{and}\quad
  B = \{\,\xi^k a\mid \xi\in\rou\,\}.
\end{equation}
Then $\rou$ consists of roots of unity and it is a cyclic group whose order
$m$ is a multiple of $k$.

\begin{lemma}\label{lemma:stab}
If $m/k$ and $k$ are coprime, the stabiliser in $G$ of the set $B$
is a complement to $P$ in $N_G(P)$. Conversely, if the stabiliser of a set of
scalar multiples of $a$ is a complement to $P$ in $N_G(P)$, then $m/k$ and
$k$ are coprime.
\end{lemma}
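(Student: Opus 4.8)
The plan is to reduce the whole statement to the structure of a single homomorphism attached to the line $\C a$. Since $P = \langle r\rangle$ has rank $1$ with $U^\perp = \C a$, the preceding discussion gives $N_G(P) = G_{U^\perp}$, so every $g\in N_G(P)$ stabilises $\C a$ and acts on it by a scalar $\psi(g)$. First I would record that $\psi\colon N_G(P)\to\C^\times$ is a homomorphism whose image is exactly the group $\rou$ of \eqref{eqn:rou}, cyclic of order $m$. Writing $\lambda=\psi(r)$ for the scalar by which $r$ acts on $a$, the element $\lambda$ is a primitive $k$th root of unity, so $\psi$ restricts to an \emph{injection} of $P$ onto the unique subgroup $\rou_k=\langle\lambda\rangle$ of $\rou$ of order $k$. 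Because $N_G(P)=C_G(P)$ by the previous lemma, $P$ is central in $N_G(P)$; consequently a subgroup $H$ is a complement to $P$ precisely when $H\cap P=1$ and $|H|=|N_G(P)|/k$.

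The second step identifies root-set stabilisers with preimages under $\psi$. For a nonempty set $B'$ of nonzero scalar multiples of $a$, the set $B'$ spans $\C a$, so $G_{B'}\subseteq G_{U^\perp}=N_G(P)$. As $g$ is $\C$-linear and acts as the scalar $\psi(g)$ on $\C a$, one has $g(B')=\psi(g)B'$ for $g\in N_G(P)$; hence $g$ fixes $B'$ setwise exactly when $\psi(g)$ lies in the subgroup $T=\{\xi\in\rou\mid \xi B'=B'\}$ of $\rou$, giving $G_{B'}=\psi^{-1}(T)$. Conversely every subgroup $\psi^{-1}(T)$ with $T\le\rou$ is realised by taking $B'=\{\,t a\mid t\in T\,\}$, whose stabiliser in $\rou$ is $T$ itself. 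For the distinguished set $B$ of \eqref{eqn:rou} one has $B=\{\,\nu a\mid \nu\in\rou^k\,\}$, where $\rou^k$ is the subgroup of $k$th powers; since $k\mid m$ this has order $m/k$ and stabilises itself, so $G_B=\psi^{-1}(\rou^k)$.

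The third step is a purely arithmetic criterion inside the cyclic group $\rou$. By the order count above, $\psi^{-1}(T)$ is a complement to $P$ if and only if $|T|=m/k$ and $\psi^{-1}(T)\cap P=1$; and since $\psi|_P$ is an isomorphism onto $\rou_k$, the latter condition is equivalent to $T\cap\rou_k=1$. In a cyclic group of order $m$ the subgroup of order $m/k$ is unique (namely $\rou^k$) and meets the subgroup $\rou_k$ of order $k$ in a subgroup of order $\gcd(m/k,k)$. Hence both conditions can be met simultaneously if and only if $\gcd(m/k,k)=1$, in which case they hold precisely for $T=\rou^k$. Specialising to $T=\rou^k$ yields the forward implication, that $G_B$ is a complement when $m/k$ and $k$ are coprime; the converse follows because any complement of the form $G_{B'}=\psi^{-1}(T)$ forces $|T|=m/k$ and $T\cap\rou_k=1$, whence $\gcd(m/k,k)=1$.

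I expect the only genuine friction to be the bookkeeping of the second step: verifying that an arbitrary root-set stabiliser is captured by a \emph{subgroup} $T\le\rou$ rather than a mere subset, and that $G_{B'}$ lands inside $N_G(P)$ so that $\psi$ may legitimately be applied. Once $G_{B'}=\psi^{-1}(T)$ is established, the remainder is the elementary cyclic-group fact that the subgroups of orders $m/k$ and $k$ intersect trivially exactly when these orders are coprime.
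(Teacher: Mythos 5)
Your proof is correct and is essentially the paper's own argument: both reduce the lemma to the cyclic group $\rou$ of scalars by which $N_G(P)$ acts on the root line $\C a$, where a complement stabilising a set of multiples of $a$ corresponds to a subgroup of $\rou$ of order $m/k$ meeting the image $\langle\psi(r)\rangle$ of $P$ trivially, and this is possible exactly when $\gcd(k,m/k)=1$. The only difference is packaging: you run both directions through the single classification $G_{B'}=\psi^{-1}(T)$ for subgroups $T\le\rou$, whereas the paper gets the forward direction by orbit--stabiliser counting on the translates of $B$ and the converse by reducing (WLOG) to an $H$-orbit $D=\rou_0 a$ and exhibiting $\rou$ as the internal direct product of $\langle\zeta\rangle$ and $\rou_0$ --- but the underlying computation in $\rou$ is the same.
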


\begin{proof}
We have $|B| = m/k$ and the set $\{\,\xi a\mid \xi\in\rou\,\}$ is the
disjoint union of the images of $B$ under the action of $N = N_G(P)$. Thus
$H = G_B$ is contained in $N$ and its index in $N$ is $k$. The assumption
that $m/k$ and $k$ are coprime implies $H \cap P = 1$ and since $k = |P\,|$
it follows that $N = P\sdprod H$.

Conversely, suppose that $H = G_D$ is a complement to $P$ in $N$, where $D$
is a set of scalar multiples of $a$. Without loss of generality we may
suppose that $a\in D$ and that $D$ is an orbit of $H$.  Then $D = \rou_0 a$,
where
\[
  \rou_0 = \{\,\lambda\in\C\mid ha = \lambda a\text{ for some }h\in H\,\}
\]
is a subgroup of $\rou$, hence cyclic. Let $\theta$ be a generator of
$\rou_0$ and suppose that $ra = \zeta a$.  The elements of $\rou$ are the
products $\zeta^i\theta^j$. If $\zeta^i\theta^j = 1$, then $r^i$ preserves
$D$ and hence $r^i = 1$ since $P\cap H = 1$.  Thus $m = k|D|$.  If $k$ and
$m/k$ are not coprime, $\(\zeta\)$ and $\(\theta\)$ have non-trivial
intersection, contrary to what we have just proved.  This completes the
proof.
\end{proof}

The values of $k$ and $m$ for all primitive unitary reflection groups can be
obtained from Tables D.1 and D.2  of \cite{lehrer-taylor:2009} and the
descriptions of the rank 2 reflection groups in
\cite[Chapter~6]{lehrer-taylor:2009}.

The lemma just proved provides a parabolic complement for 22
of the 36 conjugacy classes of parabolic subgroups in the primitive
reflection groups of rank 2 and for the parabolic subgroups of rank 1 in all
but two of the primitive groups of rank at least~3. The exceptions (for rank
at least 3) are the Shephard and Todd groups $G_{29}$ and $G_{31}$, each of
which has a single class of reflections. In both cases the reflections are
involutions and the order of $\rou$ is~4.

\begin{example}
The Shephard and Todd group $G = G_{25}$ is the semidirect product of an
extraspecial group of order 27 and exponent 3 by the group $\SL(2,3)$ (see
\cite[Theorem 8.42]{lehrer-taylor:2009}). A rank 1 parabolic subgroup $P$ is
generated by a reflection $r$ of order 3 and $P$ has 12 conjugates, whence
$|N_G(P)| = 54$. Let $a$ denote a root of $r$. The order of the group $\rou$
of \eqref{eqn:rou} is 6 and therefore, by Lemma \ref{lemma:stab}, the
stabiliser of $\{a,-a\}$ is a parabolic complement. Since
$N_G(P) = C_G(P)$ it can be seen that $N_G(P)$ has an elementary abelian
group of order 27 and three complements to $P$ in $N_G(P)$.  The complements
not equal to $P$ act transitively on the 6 roots $(-\omega)^j a$ ($0\le j<
6$).  Thus $H$ is the unique complement which is the stabiliser of a set of
roots.  The elements of order 6 in $H$ fix no non-zero vectors and therefore
they are not 1-regular (in the sense of Springer \cite{springer:1974}).  In
particular, the fact that the elements of a parabolic complement in a
Coxeter group are 1-regular does not generalise to the unitary case.
\end{example}

\medskip
In order to deal with the groups not covered by Lemma \ref{lemma:stab} we
pursue other approaches to finding a complement.

\begin{lemma}\label{lemma:det}
Suppose that $P = \(r\)$, where $r$ is a reflection of order $k$ and let $d$
be the order of the image of $\det : N_G(P)\to \C^\times$. Then $N_G(P) =
P\times H$ where $H = \{\,g\in N_G(P)\mid \det(g)^{d/k} = 1\,\}$ except when
$P$ is a parabolic subgroup of order 2 in $G_9$ or $G_{11}$.
\end{lemma}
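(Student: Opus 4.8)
The plan is to build on the preceding lemma, which gives $N := N_G(P) = C_G(P)$. Since every element of $N$ centralises $r$, the subgroup $P = \langle r\rangle$ is central in $N$, and each $g\in N$ preserves both the line $\C a$ spanned by a root $a$ of $r$ and the hyperplane $U = \Fix_V(P)$. Because $r$ acts on $\C a$ by a primitive $k$th root of unity and fixes $U$ pointwise, $\det(r)$ is itself a primitive $k$th root of unity; in particular $k$ divides $d$, so $d/k$ is an integer and the exponent in the statement makes sense.

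First I would reduce the whole assertion to a single numerical condition. The map $\psi : N \to \C^\times$ given by $\psi(g) = \det(g)^{d/k}$ is a homomorphism whose image is the group of $(d/k)$th powers in the cyclic group $\det(N)$ of order $d$, hence is cyclic of order $d/\gcd(d,d/k) = k$. Thus $H = \ker\psi$ is a normal subgroup of $N$ of index $k$. Evaluating on $r$ gives $\psi(r) = \det(r)^{d/k}$, an element of order $k/\gcd(k,d/k)$; consequently $P\cap H = 1$ if and only if $\gcd(k,d/k) = 1$. When this coprimality holds, $P$ is central, $H$ is normal, $P\cap H = 1$, and $|P|\,|H| = k\cdot(|N|/k) = |N|$, so $PH = N$ and the internal product $N = P\times H$ is direct. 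This is the clean algebraic core of the argument, and it also explains the failure in the exceptional cases: if $\gcd(k,d/k)\neq 1$ then $\psi(r)$ has order strictly less than $k$, so $P\cap H\neq 1$ and $H$ cannot be a complement.

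It then remains to verify that $\gcd(k,d/k) = 1$ for every conjugacy class of rank $1$ parabolic subgroups of the primitive groups, apart from the stated exceptions. Since $k\in\{2,3,4,5\}$, the condition to be checked is that $v_p(d) = v_p(k)$ for the prime $p$ dividing $k$; concretely it fails exactly when $4\mid d$ (for $k=2$), $9\mid d$ (for $k=3$), $8\mid d$ (for $k=4$) or $25\mid d$ (for $k=5$). As $d$ is the order of the determinant character on $C_G(r)$, I would read it off group by group from the explicit root data and linear characters recorded in the tables of \cite{lehrer-taylor:2009}, with the rank $2$ groups handled via their descriptions in \cite[Chapter 6]{lehrer-taylor:2009}, just as the values of $k$ and $m$ were obtained for Lemma~\ref{lemma:stab}.

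The main obstacle is this last, case-by-case determination of $d = |\det(C_G(r))|$: it is a finite but unstructured check, and the only delicate point is to confirm that the divisibility $4\mid d$ forcing $\psi(r) = \det(r)^{d/k} = 1$ occurs among the order $2$ parabolics precisely in $G_9$ and $G_{11}$ (where $C_G(r)$ contains elements whose determinant has order $4$, so that $d = 4$ and hence $r\in H$) and nowhere else among the rank $1$ parabolics. For those two classes the construction genuinely breaks down, which is why they are excluded; all remaining classes satisfy $\gcd(k,d/k) = 1$ and therefore yield the direct decomposition $N_G(P) = P\times H$.
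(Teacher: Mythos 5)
Your algebraic reduction is exactly the one in the paper: the same subgroup $H$ (the kernel of $g\mapsto\det(g)^{d/k}$ on $N_G(P)$), the same computation that $H$ has index $k$, and the same criterion that $P\cap H=1$ if and only if $\gcd(k,d/k)=1$, after which normality of both subgroups and an order count give $N_G(P)=P\times H$. That part is correct.

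The gap is the step you yourself flag as the main obstacle: you never establish that $\gcd(k,d/k)=1$ holds outside the stated exceptions, and the plan you sketch --- computing $d=|\det(C_G(r))|$ group by group from root data --- is both unexecuted and far heavier than necessary, because it misses the one structural observation the paper's proof rests on. Since $G$ is generated by its reflections and a reflection of order $m$ has determinant a primitive $m$th root of unity, $\det(G)$ is cyclic of order equal to the least common multiple of the orders of the reflections of $G$, and $d$ divides this $\lcm$. Reflection orders in the primitive groups lie in $\{2,3,4,5\}$, so $9$, $8$ and $25$ can never divide $d$; hence $k=3,4,5$ never produce exceptions, and $k=2$ can fail only when $4\mid d$, which forces $G$ to contain reflections of order $4$, i.e.\ $G\in\{G_8,G_9,G_{10},G_{11}\}$. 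Among these, only $G_9$ and $G_{11}$ possess parabolic subgroups of order exactly $2$: in $G_8$ and $G_{10}$ every reflection of order $2$ is the square of a reflection of order $4$ with the same root line, so the rank-$1$ parabolics there have order $4$ or $3$, never $2$. Without this $\lcm$ bound, even the ``easy'' cases (for instance excluding $9\mid d$ for the order-$3$ parabolics of $G_{25}$, $G_{26}$, $G_{32}$, $G_{33}$, $G_{34}$) require the centraliser computations you postpone, so as written the proof is incomplete precisely at the step that identifies the exceptional cases.
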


\begin{proof}
The image of $\det : N_G(P) \to \C$ is cyclic and its order is divisible by
$k$. It follows immediately that the index of $H = \{\,g\in N_G(P)\mid
\det(g)^{d/k} = 1\,\}$ in $N_G(P)$ is $k$ and $P\cap H = 1$ if and only if
$k$ and $d/k$ are coprime. To complete the proof observe that $d$ divides
the least common multiple of the orders of the reflections in $G$ and
therefore, from the tables in \cite{lehrer-taylor:2009}, $d/k$ and $k$ are
coprime except when $k = 2$ and 4 divides $d$.  Among the primitive
reflection groups only $G_9$ and $G_{11}$ contain parabolic subgroups of
orders 2 and~4.
\end{proof}

The exceptions in $G_9$ and $G_{11}$ to Lemma \ref{lemma:det} are also
exceptions to Lemma \ref{lemma:stab}.  We deal with these exceptions by
showing that in general, for almost all maximal parabolic subgroups $P$ in a
primitive reflection group $G$, the centre of $G$ is a parabolic complement
for~$P$.

\begin{thm}\label{thm:centre}
If $P$ is a maximal parabolic subgroup of the primitive reflection group $G$,
then $N_G(P) = P\times Z(G)$ except in the following cases. (See
\S\ref{sec:tab} for the definition of the \emph{type} of a parabolic
subgroup.)
\begin{enumerate}[(i)]
\item $G$ is $G_{13}$ or $G_{15}$ and $P$ is a parabolic subgroup
of order 2 with 6 conjugates in $G$.
\item $G = G_{25}$ and the type of $P$ is $2L_1$.
\item $G = G_{33}$ and the type of $P$ is $D_4$.
\item $G = G_{35}$ and the type of $P$ is $A_5$ or $A_1+2A_2$.
\end{enumerate}
\end{thm}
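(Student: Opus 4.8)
The plan is to combine the two previous rank-1 lemmas to handle the generic case, and then to dispatch the finitely many exceptional classes by direct inspection of the known data. First I would reduce to a uniform statement about each maximal parabolic $P$ by recalling that $N_G(P)$ acts on $U^\perp = \Fix_V(P)^\perp$, which is one-dimensional since $P$ is maximal, so $P$ has rank $1$ in its action on $U^\perp$ and the setwise stabiliser of $U^\perp$ contains the scalars $Z(G)$. The key observation is that $Z(G)$ acts on $U^\perp$ as a cyclic group of scalars, and $P = \langle r\rangle$ acts there as the cyclic group generated by the eigenvalue $\zeta$ of $r$; so whether $N_G(P) = P\times Z(G)$ is decided by comparing $|N_G(P)|$, $|P| = k$, $|Z(G)|$, and the intersection $P\cap Z(G)$.

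The main step is the generic computation. I would use Lemma~\ref{lemma:stab} and Lemma~\ref{lemma:det}: for a maximal parabolic the group $\rou$ of \eqref{eqn:rou} records exactly the scalars by which $N_G(P)$ scales a root $a$, and in the primitive groups the centre $Z(G)$ already realises a large cyclic subgroup of these scalars. Concretely, I expect that for all but the listed cases one has $N_G(P) = C_G(P)$ of order $k\cdot|Z(G)|$ with $P\cap Z(G) = 1$, so that $Z(G)$ is a complement and, being central, gives a direct product $P\times Z(G)$. The arithmetic condition governing when this fails is precisely the coprimality condition $\gcd(m/k,k) = 1$ of Lemma~\ref{lemma:stab} together with the determinant obstruction of Lemma~\ref{lemma:det}; when $P\cap Z(G)\ne 1$, i.e.\ when the eigenvalue of $r$ lies in the image of $Z(G)$ on $U^\perp$, the direct-product decomposition breaks down. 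I would show that this intersection is nontrivial exactly in the four families listed, by reading off the orders $k$, the degrees, and the centre orders $|Z(G)|$ from Tables~D.1 and~D.2 of \cite{lehrer-taylor:2009}.

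The hard part will be verifying the exceptional list is complete and correct, since this is where the uniform argument gives way to case analysis. For each primitive group $G_4,\dots,G_{37}$ I would tabulate the reflection orders $k$, the centre $|Z(G)|$, and the orders $|N_G(P)/P|$ of the maximal parabolics (available from Corollary~\ref{cor:orders} in the imprimitive reductions and from the stored data for the primitive cases), and check the equality $|N_G(P)| = k\,|Z(G)|$ with trivial intersection. The exceptions in (i)--(iv) are precisely those maximal parabolics where either $Z(G)\cap P\ne 1$ or $N_G(P)$ is strictly larger than $P\times Z(G)$; for these finitely many classes I would appeal to the explicit \Magma\ verification, confirming both the failure of the generic statement and that $N_G(P)$ nonetheless splits as a semidirect product (so the overall theorem of the paper is unaffected). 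The genuine obstacle is thus bookkeeping rather than conceptual: ensuring no maximal parabolic in any $G_i$ has been overlooked, which is exactly why the computer-assisted check is invoked for the boundary cases.
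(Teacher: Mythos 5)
Your proposal contains the correct kernel---the paper's entire proof is the order comparison you sketch in your last paragraph: $Z(G)$ centralises $P$ and meets it trivially, so $P\times Z(G)\subseteq N_G(P)$, and one checks the tabulated indices $|N_G(P):P|$ (from Orlik--Solomon) against $|Z(G)|$ to see that equality fails exactly in the listed cases. But the framework you build around this is wrong in ways that would derail the argument. A \emph{maximal} parabolic subgroup of $G$ is the pointwise stabiliser of a line: it is $\Fix_V(P)$ that is one-dimensional, not $\Fix_V(P)^\perp$, and $P$ has rank $n-1$, not rank~1. Except when $G$ itself has rank 2, $P$ is not of the form $\langle r\rangle$, so setting $|P|=k$ with $k$ the order of a reflection, and invoking Lemma \ref{lemma:stab} and Lemma \ref{lemma:det} (both stated only for rank 1 parabolics), has no purchase here: the exceptional types $2L_1$ in $G_{25}$, $D_4$ in $G_{33}$, and $A_5$, $A_1+2A_2$ in $G_{35}$ all have rank at least~2.

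Second, your proposed mechanism for the exceptions---that $P\cap Z(G)\ne1$, ``when the eigenvalue of $r$ lies in the image of $Z(G)$''---never occurs. $Z(G)$ consists of scalar transformations, and a nontrivial scalar fixes no nonzero vector, whereas a proper parabolic fixes $\Fix_V(P)\ne0$ pointwise; hence $P\cap Z(G)=1$ for \emph{every} proper parabolic (this is the easy half of the paper's one-line proof). Carrying out your plan of ``showing the intersection is nontrivial exactly in the four families'' would find it trivial everywhere and wrongly conclude there are no exceptions at all. The exceptions arise for the opposite reason: in those cases $|N_G(P)/P|$ is strictly larger than $|Z(G)|$, so $P\times Z(G)$ is a proper subgroup of the normaliser. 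Relatedly, tying the exception list to the coprimality and determinant obstructions of Lemmas \ref{lemma:stab} and \ref{lemma:det} is backwards: the order 2 parabolics of $G_9$ and $G_{11}$ violate both conditions yet satisfy $N_G(P)=P\times Z(G)$ (the paper proves this theorem precisely to handle them), while $G_{13}$ and $G_{15}$, which are exceptions here, are not distinguished by your criterion. What survives of your proposal after these corrections---tabulate $|N_G(P)/P|$ and $|Z(G)|$ for each class of maximal parabolics and compare---is exactly the paper's proof.
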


\begin{proof}
The index of a parabolic subgroup in its normaliser is tabulated in
\cite{orlik-solomon:1982,orlik-solomon:1983} and the order of $Z(G)$ can be
obtained from \cite[Table D.3]{lehrer-taylor:2009} using \cite[Corollary
3.24]{lehrer-taylor:2009}.  Since $P\cap Z(G) = 1$, it can be seen that the
only exceptions to $N_G(P) = P\times Z(G)$ are those listed.
\end{proof}

\begin{rem}\leavevmode
\begin{enumerate}[\enspace(i)]
\item Parabolic complements for the the parabolic subgroups of types $D_4$ in 
    $G_{33}$ and $A_5$ and $A_1+2A_2$ in $G_{35}$ follow from Theorem 
    \ref{thm:cox} below.
\item Suppose that $P$ is a parabolic subgroup of type $2L_1$ in $G = G_{25}$
    and let $Q$ be the pointwise stabiliser of $\Fix_V(P)^\perp$. From
    Table \ref{tbl:rk3} of \S\ref{sec:tab} the type of $Q$ is $L_1$,
    $|N| = 54$ and there is an involution $t\in N$, which commutes with $Q$
    and interchanges the root lines of $P$. Then $H = Q\(t\)$ is a
    complement to $P$ in $N$.
\end{enumerate}
\end{rem}
    

\begin{thm}\label{thm:rk1}
Suppose that $P$ is a parabolic subgroup of rank 1 in a primitive unitary
reflection group $G$ of rank at least 2 and let $Q$ be the pointwise
stabiliser of $\Fix_V(P)^\perp$.
\begin{enumerate}[(i)]
\item There exists a complement $H$ to $P$ in its normaliser.
\item Let $k = |P\,|$ and let $m$ be the order of the group $\rou$ defined
    in \eqref{eqn:rou}.  If $k$ is coprime to $m/k$, $H$ can be chosen to
    be the stabiliser of a set of roots and consequently $Q\subseteq H$.
    In particular this holds if the rank of $G$ is at least 3 and $G$ is
    neither $G_{29}$ nor~$G_{31}$.
\item Except when $G$ is $G_{27}$, $G_{29}$, $G_{33}$ or $G_{34}$, the
    group $H$ acts on $\Fix_V(P)$ as a unitary reflection group.  If the
    rank of $G$ is 2, then $H$ is cyclic. If the rank is at least 3, the
    isomorphism types of $Q$, $H^\circ$ and $H$ are given in the tables of
    \S\ref{sec:tab}.
\end{enumerate}
\end{thm}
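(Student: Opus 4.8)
**

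The plan is to prove the three parts largely by combining the rank-1 machinery already developed (Lemmas \ref{lemma:stab} and \ref{lemma:det}, Theorem \ref{thm:centre}) with a finite case analysis driven by the tables of \cite{lehrer-taylor:2009} and \cite{orlik-solomon:1982,orlik-solomon:1983}. Since $P$ has rank 1, we have $P=\(r\)$ with $r$ a reflection of order $k\in\{2,3,4,5\}$, and by the earlier lemma $N_G(P)=C_G(P)$. The key structural fact to exploit is that, because $P$ has rank 1, $\Fix_V(P)$ is a hyperplane and $\Fix_V(P)^\perp$ is the root line $\C a$; thus $N_G(P)=G_{\C a}$ acts on the root line as a cyclic group of scalars, namely the group $\rou$ of \eqref{eqn:rou} of order $m$.

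For part (i), I would argue that a complement always exists by splitting into the coprime and non-coprime cases for $k$ and $m/k$. When $\gcd(k,m/k)=1$, Lemma \ref{lemma:stab} immediately gives the stabiliser $G_B$ as a complement; this already covers the rank-$\ge 3$ groups other than $G_{29},G_{31}$, as asserted in part (ii). The remaining finitely many cases are the rank-2 groups where $k$ and $m/k$ share a factor, together with $G_{29}$ and $G_{31}$ themselves. For these I would invoke Lemma \ref{lemma:det} (which fails only for the order-2 parabolics in $G_9$ and $G_{11}$), and handle the residual exceptions $G_9,G_{11}$ (and any order-2 parabolics in $G_{13},G_{15}$) by the explicit construction in the remark following Theorem \ref{thm:centre}, exhibiting a concrete involution commuting with $Q$ that completes $Q$ to a complement. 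Part (ii) is then essentially a restatement of Lemma \ref{lemma:stab}: the coprimality hypothesis yields a root-stabiliser complement, and since every reflection root in $N_G(P)$ lies in $\Fix_V(P)\cup\C a$, fixing the chosen set of multiples of $a$ forces $Q=G(\Fix_V(P)^\perp)\subseteq H$; the final sentence follows because for rank $\ge 3$ outside $G_{29},G_{31}$ one always has $\gcd(k,m/k)=1$, read off from Tables D.1 and D.2.

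For part (iii), the claim that $H$ acts on $\Fix_V(P)$ as a reflection group is the genuinely non-uniform statement and is where the real work lies. Here $H\cong N_G(P)/P$ acts on the hyperplane $\Fix_V(P)$, and I would determine this action case by case: for rank-2 groups $\Fix_V(P)$ is $1$-dimensional, so $H$ acts by scalars and is cyclic, giving that part of the assertion immediately. For rank $\ge 3$ the plan is to identify $H^\circ$ (generated by the elements of $H$ acting as reflections on $\Fix_V(P)$) using the tabulated normaliser indices together with the known root-system data, and to verify in each class that $H$ itself—not merely $H^\circ$—is a reflection group on $\Fix_V(P)$. The four exceptional groups $G_{27},G_{29},G_{33},G_{34}$ are precisely those for which this fails, i.e.\ where $H$ contains an element acting on $\Fix_V(P)$ with no eigenvalue $1$ on the relevant subspace, so that $H/H^\circ$ is nontrivial; the $G_{25}$ example in the text (elements of order $6$ acting without fixed vectors) is the prototype for this phenomenon.

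The main obstacle I anticipate is part (iii) for the rank-$\ge 3$ primitive groups, where there is no uniform argument: one must verify reflection-group status class by class, which is exactly why the isomorphism types of $Q$, $H^\circ$ and $H$ are relegated to the tables of \S\ref{sec:tab} rather than proved by a single formula. I expect the verification to rest on computer-assisted computation with the \Magma\ package for at least the delicate cases, matching the $17$ ad hoc classes mentioned in the introduction, while the bulk of the classes follow from the coprimality criterion and the centre/determinant lemmas established above.
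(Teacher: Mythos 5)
Your strategy is essentially the paper's: part (i) from Lemma \ref{lemma:det} together with Theorem \ref{thm:centre}, part (ii) from Lemma \ref{lemma:stab} plus the orders of the groups of roots of unity read off from Table D.2 of \cite{lehrer-taylor:2009}, and part (iii) by one-dimensionality of $\Fix_V(P)$ in rank 2 and case-by-case, computer-assisted identification (as in \cite{muralee:2005}) in rank at least 3.

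There is, however, one mis-routed step that would fail as written. You propose to handle the order-2 parabolics of $G_9$ and $G_{11}$ (the exceptions to Lemma \ref{lemma:det}) ``by the explicit construction in the remark following Theorem \ref{thm:centre}, exhibiting a concrete involution commuting with $Q$''. That remark contains no such construction for these groups: it concerns the type $2L_1$ parabolic of $G_{25}$ and the types $D_4$, $A_5$, $A_1+2A_2$ in $G_{33}$ and $G_{35}$. The argument that actually works, and the one the paper uses, is Theorem \ref{thm:centre} itself: a rank 1 parabolic subgroup of a rank 2 group is maximal (any larger proper parabolic would have a nonzero fixed space, hence rank at most 1), and since $G_9$ and $G_{11}$ do not occur among that theorem's exceptions, $N_G(P) = P\times Z(G)$, so $Z(G)$ is a complement. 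Conversely, the order-2 parabolics with 6 conjugates in $G_{13}$ and $G_{15}$, which you also send to the remark, genuinely \emph{are} exceptions to Theorem \ref{thm:centre}; they are instead covered by Lemma \ref{lemma:det}, which excludes only $G_9$ and $G_{11}$. Once the two tools are applied to the right cases, the coverage is complete and your proof agrees with the paper's. One further small point in (iii): cyclicity of $H$ in rank 2 needs the action of $H$ on the line $\Fix_V(P)$ to be faithful, not merely scalar; this holds because the kernel of $N_G(P)$ acting on $\Fix_V(P)$ is $G(\Fix_V(P))\cap N_G(P) = P$ (using that $P$, being parabolic, equals $G(\Fix_V(P))$) and $H\cap P = 1$, so $H$ embeds in $\C^\times$.
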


\begin{proof}
(i)\enspace The existence of $H$ is a consequence of Lemma \ref{lemma:det}
and Theorem \ref{thm:centre}.

\smallskip
\noindent(ii)\enspace Suppose that the rank of $G$ is at least 3. Then the
order of a reflection in $G$ is either 2 or 3. From
\cite[Table~D.2]{lehrer-taylor:2009} the order of the group of roots of unity
of the ring of definition of $G$ is:
\begin{enumerate}[\quad(a)]
\item 2 for $G_{23}$, $G_{24}$, $G_{28}$, $G_{30}$, $G_{35}$, $G_{36}$ and
    $G_{37}$;
\item 6 for $G_{25}$, $G_{26}$, $G_{27}$, $G_{32}$, $G_{33}$ and $G_{34}$;
\item 4 for $G_{29}$ and $G_{31}$.
\end{enumerate}

Thus the order of the group $\rou$ defined in \eqref{eqn:rou} is 2 or 6,
except for reflections in $G_{29}$ and $G_{31}$ where it is~4. Therefore,
from Lemma \ref{lemma:stab}, except for $G_{29}$ and $G_{31}$, a complement
to $P$ may be chosen to contain $Q$.

\smallskip
\noindent(iii)\enspace If the rank of $G$ is 2, then $H$ is cyclic and
thus it acts as a reflection group on $\Fix_V(P)$. Furthermore, in all
cases except when the order of $P$ is 3 and $G$ is $G_4$ or $G_7$, the
groups $P$ and $Q$ have the same order (see \cite[Chapter 6]{lehrer-taylor:2009}).
In the two exceptional cases $Q = 1$.

If the rank of $G$ is at least 3, full details of the structure of $H$ can
be found in \cite{muralee:2005} and verified using the \Magma\ \cite{magma:1997}
code located at \url{http://www.maths.usyd.edu.au/u/don/details.html#programs}.
\end{proof}

From now on we suppose that the rank of $G$ is at least 3 and the rank of
$P$ is at least~2.

\subsection{Parabolic subgroups of Coxeter type}
From the tables in \cite{taylor:2012} or
\cite{orlik-solomon:1982,orlik-solomon:1983} there are 159 conjugacy classes
of parabolic subgroups of rank at least 2 in the 15 primitive reflection
groups of rank at least 3. All except 19 of these are of Coxeter type. We
shall see that, except for some instances when the ring of definition is the
Gaussian integers $\Z[i]$, the existence of a parabolic complement for a
parabolic subgroup of Coxeter type is a consequence of Theorem~\ref{thm:paracomp}.

The primitive unitary reflection groups which contain reflections of order 3
do not have parabolic subgroups of Coxeter type and rank at least~2.
Therefore, throughout this section we may suppose that $G$ is generated by
reflections of order~2.

From \cite[Theorem 8.30]{lehrer-taylor:2009}, except for $G_{28}$---the
Coxeter group of type $F_4$---there is an $A$-root system $\Sigma$ for $G$
such that $(a,a) = 2$ for all $a\in\Sigma$, where $A$ is the ring of definition
of $G$.  For the groups $G$ under consideration $\rou$ is the group of $m$th
roots of unity, where $m$ is 2, 4 or~6.

The set $\mathcal L$ of 1-dimensional subspaces spanned by the vectors in
$\Sigma$ is the \emph{line system} of $G$. Let $P$ be a parabolic subgroup of
$G$ of Coxeter type, let $\mathcal M$ be the subset of $\mathcal L$ spanned
by the roots of reflections in $P$ and let $\Sigma_P$ be the roots in
$\Sigma$ which belong to the lines in $\mathcal M$. Let $\Phi$ be the
standard root system of $P$ as defined, for example, in \cite{bourbaki:1968}.
The roots of $P$ are eigenvectors of its reflections and so we may assume
that the lines in $\mathcal M$ are spanned by roots in $\Phi$. Our first task
is to determine when Hypothesis 1 is satisfied.

\begin{lemma}\label{lemma:no4}
If $P$ does not contain reflections $r$ and $s$ such that the
order of $rs$ is 4, the root system $\Phi$ may be realised as a subset of
$\Sigma_P$.
\end{lemma}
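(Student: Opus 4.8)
The plan is to produce the required realization by first placing the \emph{simple} roots of $\Phi$ inside $\Sigma_P$ after a rescaling by roots of unity, and then using closure under reflection to capture all of $\Phi$. First I would let $\alpha_1,\dots,\alpha_\ell$ be the simple roots of $\Phi$; the associated simple reflections lie in $P$ and span lines $L_1,\dots,L_\ell\in\mathcal M$. On each line I pick an arbitrary root $a_i\in\Sigma_P$. Since $(a_i,a_i)=2=(\alpha_i,\alpha_i)$ the diagonal of the Gram matrix already agrees, and if $\alpha_i,\alpha_j$ are non-adjacent then $r_{a_i}$ and $r_{a_j}$ commute, forcing $(a_i,a_j)=0=(\alpha_i,\alpha_j)$. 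Thus the whole problem reduces to correcting the off-diagonal entries for adjacent pairs.

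For an adjacent pair the product $r_{a_i}r_{a_j}$ has order $m_{ij}\in\{3,5\}$: order $4$ is excluded by hypothesis, and order $6$ cannot occur because a $G_2$ configuration requires two root lengths whereas every root of $\Sigma$ satisfies $(a,a)=2$. I would then show that $(a_i,a_j)$ differs from the target value $(\alpha_i,\alpha_j)=-2\cos(\pi/m_{ij})$ by a root of unity in $\rou$. When $m_{ij}=3$ one has $|(a_i,a_j)|=1$, and by property (iii) of an $A$-root system $(a_i,a_j)\in A$; an algebraic integer of absolute value $1$ in the rings $\Z$, $\Z[i]$ or $\Z[\omega]$ occurring here has norm $1$, hence is a unit, hence a root of unity in $\rou$. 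When $m_{ij}=5$ the identity $2\cos(\pi/5)=(1+\sqrt5)/2$ forces $\sqrt5\in F$, which happens only for the real groups $G_{23}$ and $G_{30}$; there $\rou=\(-1\)$ and the roots are real, so $(a_i,a_j)=\pm(\alpha_i,\alpha_j)$ and the ratio again lies in $\rou$.

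With the phases $\epsilon_{ij}:=(a_i,a_j)/(\alpha_i,\alpha_j)\in\rou$ in hand, I would rescale $a_i\mapsto\xi_ia_i$ by roots of unity $\xi_i\in\rou$ chosen to solve $\xi_i\overline{\xi_j}=\epsilon_{ij}^{-1}$ along every edge. Because each Coxeter diagram of type $A$, $D$, $E$, $H_3$ or $H_4$ is a tree (a reducible $P$ giving a forest), the system carries no cycle constraints: fixing one $\xi$ per component and propagating outward solves it, and at each step $\xi_j=\overline{\xi_i^{-1}\epsilon_{ij}^{-1}}$ stays in $\rou$ since $\rou$ is a group closed under inversion and under complex conjugation. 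After this rescaling the Gram matrix of the $a_i$ coincides with that of the $\alpha_i$.

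Finally I would note that the linear map $\phi:\alpha_i\mapsto a_i$ is then an isometry of the real span into $U$, and the order-$2$ reflection formula $r_a(v)=v-(v,a)a$ (valid because $(a,a)=2$) gives $\phi(r_{\alpha_k}(\alpha_j))=r_{a_k}(\phi(\alpha_j))$, so $\phi$ is $P$-equivariant. As $\Phi$ is the $P$-orbit of its simple roots and $\Sigma$ is closed under the reflections $r_{a_k}\in P$ by property (iv), the image $\phi(\Phi)$ lies in $\Sigma$, and it lies on lines of $\mathcal M$ because $P$ permutes $\mathcal M$; hence $\phi(\Phi)\subseteq\Sigma_P$. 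Taking this image as the realization of $\Phi$ proves the lemma. The main obstacle is the arithmetic input of the second paragraph—verifying that the connecting inner products are roots of unity in $\rou$—since once that is secured, the orthogonality of non-adjacent roots, the tree-solvability of the rescaling, and the equivariance of $\phi$ are all formal.
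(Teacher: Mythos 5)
Your overall architecture is sound and genuinely different from the paper's: you place representatives only on the simple-root lines, solve the phase equations over the Coxeter diagram (a forest), and then capture all of $\Phi$ by $P$-equivariance together with closure property (iv), whereas the paper chooses a representative on every line of $\mathcal M$ at once and makes all inner products real by an iterative rescaling, citing Popov. Your closing steps (orthogonality for non-adjacent pairs, tree-solvability, equivariance of $\phi$) are correct. The problem is the arithmetic input that you yourself flag as the crux: it is wrong as stated, in two places. For the order-$5$ case you claim that $\sqrt5\in F$ forces $G$ to be one of the real groups $G_{23}$, $G_{30}$. That is false: the Valentiner group $G_{27}$ has ring of definition $\Z[\omega,\tau]$, so $\sqrt 5\in F$, it is not real, $\rou=\rou_6$, and it has parabolic subgroups of type $I_2(5)$ (see Table~\ref{tbl:rk3}); your argument simply does not cover this case. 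For the order-$3$ case your list of rings ($\Z$, $\Z[i]$, $\Z[\omega]$) is also incomplete: $A_2$-parabolics occur in $G_{23}$, $G_{24}$, $G_{27}$ and $G_{30}$, whose rings of definition are $\Z[\tau]$, $\Z[\lambda]$ (with $\lambda^2+\lambda+2=0$) and $\Z[\omega,\tau]$, none of which appear in your case analysis.

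Both gaps are repairable, and the repaired statement is exactly what the paper imports from \cite[Lemma 7.7]{lehrer-taylor:2009}: if $|r_ar_b|=3$ then $(a,b)\in\rou$, and if $|r_ar_b|=5$ then $(a,b)\in\rou\tau^{\pm1}$. Alternatively, since $F$ is abelian (hence totally real or CM), Kronecker's theorem applies in every ring that occurs: any algebraic integer of complex absolute value $1$ is a root of unity in $\rou$; and in the order-$5$ case $(\alpha_i,\alpha_j)=-\tau$ is a unit of $A$, while $|(a_i,a_j)|=|(\alpha_i,\alpha_j)|=\tau$ rules out the value $\alpha\tau^{-1}$, so the ratio $\epsilon_{ij}$ lies in $\rou$ as you need. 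Finally, your exclusion of $m_{ij}=6$ on the grounds that ``a $G_2$ configuration requires two root lengths'' is not a valid argument for unitary groups: $G(6,6,2)$ is dihedral of order $12$ and has a root system in which every root has norm $2$ (e.g.\ $e_1-e_2$ and $e_1+\omega e_2$). Here, admittedly, you are in the same position as the paper, which asserts without proof that the orders are $1,2,3,5$; the honest justification is a property of the line systems of the specific ambient groups (equivalently, of the classification of their parabolic subgroups), not a general fact about root lengths.
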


\begin{proof}
It is a consequence of our assumptions that for $a,b\in \Sigma_P$ the order
of $r_ar_b$ is 1, 2, 3 or 5.  From \cite[Lemma 7.7]{lehrer-taylor:2009} if
$|r_ar_b| = 2$, then $(a,b) = 0$;  if $|r_ar_b| = 3$, then $(a,b)\in\rou$
whereas if $|r_ar_b| = 5$, then $(a,b) = \alpha\tau$ or $(a,b) =
\alpha\tau^{-1}$ for some $\alpha\in\rou$ and where $\tau^2 = \tau+1$.

Let $\mathcal M = \{\ell_1,\ell_2,\dots,\ell_t\}$ and for $1\le j\le t$
choose $a_j\in \ell_j\cap \Sigma_P$. Taking each root $a_i$ in turn, if
$(a_i,a_j) \ne (a_j,a_i)$ replace $a_j$ by $\alpha a_j$, where $\alpha =
(a_i,a_j)/|(a_i,a_j)|$.  It follows from \cite[\S1.6]{popov:1982} that this
process results in roots $a_i\in \ell_j\cap \Sigma_P$ such that all inner
products $(a_i,a_j)$ are real. These roots with their negatives form the
standard root system $\Phi$ of $P$.
\end{proof}

\begin{lemma}
If $P$ contains reflections $r$ and $s$ such that the order of $rs$ is 4, the
standard root system $\Phi$ of $P$ is the disjoint union of its long roots
$\Phi_\ell$ and its short roots $\Phi_s$.  If $G = G_{28}$, then $\Phi =
\Sigma_P$. In all other cases $\Phi_\ell$ may be realised as a subset of
$\Sigma_P$ and there exists $\gamma$ in $A$, the ring of definition, such
that $\gamma\bar\gamma = 2$ and $\Phi_s$ may be realised as a  subset
of~$(\gamma/2)\Sigma_P$.
\end{lemma}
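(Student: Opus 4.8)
The plan is to split the argument according to the hypothesis $|rs|=4$, which among the primitive groups of rank at least $3$ generated by involutions singles out exactly $G_{28}$ (the Coxeter group of type $F_4$) together with the two groups $G_{29}$ and $G_{31}$, whose ring of definition is $A=\Z[i]$: the simply-laced groups $G_{35},G_{36},G_{37}$ have only $3$-bonds, while $G_{24}$ has no $4$-bond and $G_{23}$, $G_{30}$ carry $5$-bonds, so none of these can contain a pair $r,s$ with $|rs|=4$. I would first record the splitting $\Phi=\Phi_\ell\mathbin{\dot\cup}\Phi_s$. Since $P$ contains involutions with $|rs|=4$, its Coxeter diagram has a bond of strength $4$, so some irreducible component of $\Phi$ is of type $B_n$, $C_n$ or $F_4$; in each such component the Weyl group has two root-length orbits, and taking the roots of any simply-laced component to be long partitions $\Phi$ into its long roots $\Phi_\ell$ and short roots $\Phi_s$. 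No $5$-bonds intervene, as the groups remaining here have none.

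For $G=G_{28}$ the result is immediate. By \cite[Theorem~8.30]{lehrer-taylor:2009} one may take $\Sigma$ to be the usual $F_4$ root system, which already realises both lengths, with $(a,a)=2$ on long roots and $(a,a)=1$ on short roots; moreover $\rou=\langle -1\rangle$, so each line of $\mathcal L$ carries exactly the pair $\pm a$. Hence the roots of $\Sigma$ lying on the lines of $\mathcal M$ are precisely the roots of $\Phi$, of each length, and $\Phi=\Sigma_P$.

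For $G\in\{G_{29},G_{31}\}$ the same theorem gives $(a,a)=2$ for every $a\in\Sigma$, so $\Sigma_P$ supplies only vectors of squared length $2$. I would set $\gamma=1+i$, so that $\gamma\bar\gamma=2$. The long roots of $\Phi$ have squared length $2$ and are therefore sought directly in $\Sigma_P$, while for $a\in\Sigma_P$ the vector $(\gamma/2)a$ has squared length $(\gamma\bar\gamma/4)(a,a)=1$, the correct length for a short root, so the short roots are sought in $(\gamma/2)\Sigma_P$. To see that they genuinely lie there I would analyse a single rank-$2$ subsystem of type $B_2=C_2$ spanned by two lines $L,L'$ of $\mathcal M$ meeting at the $B_2$ angle. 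By root-system axiom (iii) together with $f=-1$, the inner product of two roots of $\Sigma_P$ lies in $\Z[i]$, and at the $B_2$ angle it has norm $|(a,a')|^2=\tfrac{1}{2}(a,a)(a',a')=2$; the norm-$2$ elements of $\Z[i]$ are exactly the associates $\pm1\pm i$ of $\gamma$. Choosing a long simple root $\alpha\in L\cap\Sigma_P$ and adjusting the phase of a root on $L'$ by an element of $\rou$, this forces a short simple root $\beta\in(\gamma/2)\Sigma_P$ with $(\alpha,\beta)=-1$, so that $\{\alpha,\beta\}$ is a standard pair of $B_2$ simple roots.

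It then remains to propagate this from a spanning set of simple roots to all of $\Phi$. The reflections of $P$ are $\Z[i]$-linear, preserve $\Sigma$, and preserve lengths, so they stabilise both $\Sigma_P$ and $(\gamma/2)\Sigma_P$; consequently, once simple roots are placed correctly along the diagram of $P$, their $P$-orbits fill out $\Phi_\ell\subseteq\Sigma_P$ and $\Phi_s\subseteq(\gamma/2)\Sigma_P$. The step I expect to be the main obstacle is precisely this coordination: verifying that the phase choices forced on the individual lines by the rank-$2$ analysis are mutually consistent and reproduce the Bourbaki inner products of $\Phi$. What makes it manageable is that the diagrams $B_n$, $C_n$ and $F_4$ occurring here are trees, so the compatible choices can be made edge by edge without obstruction, and that the relevant cross inner product is always an associate of $\gamma$, which is exactly what binds the short roots to the scaling factor $\gamma/2$.
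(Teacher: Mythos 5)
There is a genuine gap, and it occurs at the very first step: your classification of the groups in which the hypothesis $|rs|=4$ can hold is wrong. A parabolic subgroup is the pointwise stabiliser of a subspace, and a pair of reflections $r,s\in P$ with $|rs|=4$ need not correspond to a ``bond'' in any diagram for $G$; it arises from non-simple pairs of reflections. In fact both $G_{24}$ and $G_{27}$ contain parabolic subgroups of type $B_2$ (see Table~\ref{tbl:rk3}, and the remark following the lemma in the paper: the hypothesis is satisfied exactly in $G_{24}$, $G_{27}$, $G_{28}$ and $G_{29}$, with only types $B_2$ and $B_3$ occurring). Conversely $G_{31}$, which you include, has no Coxeter-type parabolic with a 4-bond (its Coxeter-type parabolics are $A_1$, $2A_1$, $A_2$, $A_1+A_2$, $A_3$); that inclusion is vacuous and harmless, but dropping $G_{24}$ and $G_{27}$ is not.

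The omission is fatal to your construction, because for those two groups the element $\gamma=1+i$ does not exist in the ring of definition: for $G_{24}$ one has $A=\Z[\lambda]$ with $\lambda^2+\lambda+2=0$, and for $G_{27}$ one has $A=\Z[\omega,\tau]$ with $\omega^2+\omega+1=0$, $\tau^2-\tau-1=0$; neither ring contains $i$. The paper's proof handles precisely this point: excluding $G_{28}$ (where $\Phi=\Sigma_P$ directly, as in your second paragraph), it lists the three possible rings $\Z[\lambda]$, $\Z[\omega,\tau]$, $\Z[i]$ and takes $\gamma$ to be $\lambda$, $\omega+\tau$, $i+1$ respectively (each of norm $2$), then rescales the roots on short lines by $\gamma/2$ and invokes the phase-adjustment construction of the preceding lemma (Lemma~\ref{lemma:no4}) to realise $\Phi$. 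Your $\Z[i]$ branch --- the norm computation $|(a,a')|^2=2$ at the $B_2$ angle, identification of norm-2 elements as associates of $\gamma$, and propagation along the tree --- is essentially that argument specialised to $G_{29}$, so the repair is to redo the same analysis with norm-2 elements of $\Z[\lambda]$ and of $\Z[\omega,\tau]$ in the other two cases rather than to argue they cannot occur.
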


\begin{proof}
We may suppose that $G \ne G_{28}$. It follows from \cite[Theorem
8.30]{lehrer-taylor:2009} that $A$ is either $\Z[\lambda]$, $\Z[\omega,\tau]$
or $\Z[i]$, where $\lambda^2 + \lambda + 2 = 0$, $\omega^2+\omega+1 = 0$,
$\tau^2 - \tau - 1 = 0$ and $i^2 + 1 = 0$. We may take $\gamma$ to be
$\lambda$, $\omega+\tau$ or~$i+1$, respectively.

For all $a\in\Sigma_P$ such that $a$ spans a short root line in $\mathcal M$,
replace $a$ by $(\gamma/2)a$.  We complete the proof by applying the
construction of the previous lemma.
\end{proof}

\begin{rem}
Of the 15 primitive reflection groups of rank at least 3 only $G_{24}$,
$G_{27}$, $G_{28}$ and $G_{29}$ contain parabolic subgroups of Coxeter type
that satisfy the hypotheses of this lemma and only types $B_2$ and~$B_3$
occur.
\end{rem}

From now on suppose that $\Phi$ is realised as in the previous two lemmas and
let $\Delta$, $\widetilde\Delta$ and $\widetilde\Phi^+$ be defined as in
\eqref{eqn:extend}.

\begin{thm}\label{thm:cox}
Suppose that $G$ is a primitive reflection group of rank at least 3 and that
the group of units of its ring of definition has order 2 or 6. If $P$ is a
parabolic subgroup of Coxeter type in $G$, then $G_{\tilde\Delta}$ is a
complement to $P$ in~$N_G(P)$.
\end{thm}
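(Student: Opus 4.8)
The plan is to reduce the statement to Theorem~\ref{thm:paracomp} by verifying that its two hypotheses hold under the stated conditions.  Since $G$ has rank at least $3$ and contains a parabolic subgroup of Coxeter type of rank at least $2$, the group $G$ is generated by reflections of order $2$, and by \cite[Theorem 8.30]{lehrer-taylor:2009} I may take an $A$-root system $\Sigma$ with $(a,a)=2$ for all $a\in\Sigma$ (treating $G_{28}$ as in the preceding lemma).  The hypothesis that the group of units $\rou$ of the ring of definition has order $2$ or $6$ is precisely what I need to check Hypothesis~2: if $|\rou|=2$ then $\rou=\(-1\)$ and I take $h=1$, $\gamma=1$; if $|\rou|=6$ then $\rou=\(-\omega\)\simeq\(-1\)\times\(\omega\)$ with $\omega$ of odd order $h=3$, so I take $\gamma=\omega$.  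In both cases $\rou=\(-1\)\times\(\gamma\)$ with $h$ odd, so Hypothesis~2 is satisfied.

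Next I would establish Hypothesis~1, namely that the standard root system $\Phi$ of $P$ sits inside $\Sigma_P$.  This is exactly the content of Lemma~\ref{lemma:no4} together with the lemma that follows it.  If $P$ contains no pair of reflections $r,s$ with $|rs|=4$, then Lemma~\ref{lemma:no4} realises $\Phi$ as a subset of $\Sigma_P$ directly.  If such a pair does exist (so a $B_2$ or $B_3$ component is present), then by the preceding remark $G$ is one of $G_{24}$, $G_{27}$, $G_{28}$ or $G_{29}$, and the second lemma realises the long roots $\Phi_\ell$ in $\Sigma_P$ and the short roots $\Phi_s$ in $(\gamma/2)\Sigma_P$; since I have fixed $\Phi$ to be this rescaled system, Hypothesis~1 holds for the chosen $\Phi$.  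Here I must be careful that the rescaling used for the short roots is compatible with the extension $\widetilde\Delta=\bigcup_i\gamma^i\Delta$, but since the rescaling replaces a short root $a$ by $(\gamma/2)a$ and the roots of $\Phi$ thus lie in $A$-multiples of $\Sigma$, the sets $\widetilde\Delta,\widetilde\Phi^+$ of \eqref{eqn:extend} are still well defined as subsets of $U$, which is all that the proof of Theorem~\ref{thm:paracomp} requires.

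With both hypotheses in hand, Theorem~\ref{thm:paracomp} applies verbatim and yields that $G_{\tilde\Delta}$ is a complement to $P$ in $N_G(P)$, completing the proof.  The main obstacle is not a hard calculation but a bookkeeping issue: I must confirm that the two preceding lemmas genuinely cover every parabolic subgroup of Coxeter type in every group $G$ with $|\rou|\in\{2,6\}$, so that a realisation of $\Phi$ satisfying Hypothesis~1 always exists.  The delicate case is $G_{28}=F_4$, where the earlier lemma gives $\Phi=\Sigma_P$ outright but the ambient group has $|\rou|=2$, and the $B_2$/$B_3$ subsystems with their long/short root distinction; I would verify that the choice of $\gamma$ with $\gamma\bar\gamma=2$ from the second lemma does not interfere with Hypothesis~2, which it does not, since that $\gamma$ plays no role in \eqref{eqn:extend} once $\Phi$ has been fixed.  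Groups with $|\rou|=4$, namely $G_{29}$ and $G_{31}$, are deliberately excluded from the statement precisely because Hypothesis~2 fails there, and this exclusion is exactly what makes the reduction to Theorem~\ref{thm:paracomp} clean.
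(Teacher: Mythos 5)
Your overall strategy---check Hypothesis 2 from the assumption on the units, get Hypothesis 1 from Lemma~\ref{lemma:no4}, and feed everything into Theorem~\ref{thm:paracomp}---is exactly the paper's, and your treatment of Hypothesis 2, of the case with no pair of reflections of product order 4, and of $G_{28}$ (where $\Phi=\Sigma_P$) is correct. The gap is in the remaining case: a parabolic subgroup $P$ of type $B_2$ in $G_{24}$ or $G_{27}$. There your claim that ``Hypothesis~1 holds for the chosen $\Phi$'' is false, and with it the claim that Theorem~\ref{thm:paracomp} ``applies verbatim''. The second realisation lemma places the short roots of $\Phi$ inside $(\gamma/2)\Sigma_P$ with $\gamma\bar\gamma=2$; since every root of $\Sigma$ has norm $2$ (the normalisation you yourself invoke from \cite[Theorem 8.30]{lehrer-taylor:2009}), the rescaled short roots have norm $1$ and therefore do not lie in $\Sigma$ at all, let alone in $\Sigma_P$. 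Hypothesis 1 genuinely fails for this realisation of $\Phi$, which is precisely why the paper does not cite Theorem~\ref{thm:paracomp} in this case but instead constructs the complement ``as in the proof of'' that theorem.

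Your hedge---that the sets of \eqref{eqn:extend} are ``still well defined as subsets of $U$, which is all that the proof of Theorem~\ref{thm:paracomp} requires''---misidentifies what that proof actually uses. Its very first step needs $\Sigma_P=\widetilde\Phi^+\cup\widetilde\Phi^-$, equivalently that every $g\in N_G(P)$ maps $\widetilde\Phi^+\cup\widetilde\Phi^-$ into itself; this is the only place Hypothesis 1 enters, and it is exactly the step that must be re-justified once the short roots have been rescaled. What saves the argument is that elements of $N_G(P)$ permute $\Sigma_P$ and preserve the partition of the root lines of $P$ into long and short lines: in these two groups $N_G(P)$ equals $P\times Q$ (for $G_{24}$), respectively $(P\times Q)Z(G)$ (for $G_{27}$), so the conjugation action on $P$ is inner and cannot interchange the two classes of reflections of $B_2$. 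Since $g\bigl((\gamma/2)a\bigr)=(\gamma/2)g(a)$, it follows that $g$ stabilises $\widetilde\Phi^+\cup\widetilde\Phi^-$, after which the length-function argument of Theorem~\ref{thm:paracomp} runs unchanged. Without some verification of this kind, the $B_2$ cases in $G_{24}$ and $G_{27}$ are not covered by your proof.
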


\begin{proof}
It is clear that Hypothesis 2 holds. If there are no reflections $r,s$ in $P$
such that the order of $rs$ is 4, Hypothesis 1 is a consequence of
Lemma~\ref{lemma:no4} and the result follows from Theorem \ref{thm:paracomp}.
This is also the case for $G_{28}$, the Coxeter group of type $F_4$.

Thus we are reduced to showing that the result holds when $P$ has type $B_2$
and $G$ is either $G_{24}$ or $G_{27}$.  In this case $\Delta = \{a,b\}$
where $a$ is a long root and $b$ is a short root.  As in the proof of Theorem
\ref{thm:paracomp}, $\{\,g\in G\mid g(\widetilde\Delta) =
\widetilde\Delta\,\}$ is a complement to $P$ in~$N_G(P)$.
\end{proof}

\begin{thm}\label{thm:gauss}
If $\rou = \rou_4$, then $G$ is $G_{29}$ or $G_{31}$ and if $P$ is a
parabolic subgroup of Coxeter type in $G$, we have $N_G(P) = P\sdprod H$ for
some subgroup $H$. The complement $H$ can be chosen to be the setwise
stabiliser of a set of roots of reflections which generate $P$ if and only if
the type of $P$ is $A_2$, $B_2$ or $A_3$ (two classes) in $G_{29}$ or $A_2$
or $A_3$ in~$G_{31}$.
\end{thm}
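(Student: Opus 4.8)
The plan is to handle $G_{29}$ and $G_{31}$ together, the essential new feature being that $\rou_4=\langle i\rangle$ contains $-1=i^2$, so Hypothesis 2 fails and Theorem \ref{thm:paracomp} is unavailable. First I would record that $\rou=\rou_4$ forces $G\in\{G_{29},G_{31}\}$, which comes from the case analysis in the proof of Theorem \ref{thm:rk1}(ii). Since every reflection in these groups has order 2, each root line of $P$ carries the four roots $a$, $ia$, $-a$, $-ia$, so $\Sigma_P=\rou_4\Phi^+$ strictly contains the real root system $\Phi$; this is exactly why the extended positive system of \eqref{eqn:extend} degenerates, as it would contain both $\Phi^+$ and $-\Phi^+$, and the length function of Lemma \ref{lemma:length} can no longer be defined.

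For the existence of a complement I would abandon the length-function approach and argue directly from the structure of $N_G(P)$. Writing its reflection subgroup as $P\times Q$ with $Q=G(U^\perp)$, the quotient $N_G(P)/(P\times Q)$ is a small group generated by elements that scale individual root lines by powers of $i$ together with possible diagram symmetries. I would lift a generating set of $N_G(P)/P$ to elements of $N_G(P)$ and, adjusting each lift by an element of $P$, verify that they close up into a subgroup $H$ with $H\cap P=1$ and $|H|=|N_G(P)/P|$; for the finitely many conjugacy classes in $G_{29}$ and $G_{31}$ the requisite orders are read off from the tables of \S\ref{sec:tab}.

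For the characterization I would treat the two directions separately. For the forward direction I would exhibit, for each listed type $A_2$, $B_2$, $A_3$ (two classes) in $G_{29}$ and $A_2$, $A_3$ in $G_{31}$, an explicit set $D$ of roots of reflections generating $P$ whose setwise stabiliser $G_D$ satisfies $G_D\cap P=1$ and $|G_D|=|N_G(P)/P|$. The construction imitates the rank-1 set $B$ of \eqref{eqn:rou}: one enlarges a simple system $\Delta$ by carefully chosen $\rou_4$-multiples so that no nontrivial element of $P$ preserves $D$. These are precisely the ad hoc classes recorded in Table \ref{tbl:eis}, and the verification is the \Magma\ computation described in the introduction. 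For the reverse direction I would show that for every remaining Coxeter type, in particular the rank-1 type $A_1$ and all the reducible types, no such $D$ exists. The governing obstruction is the rank-1 failure of Lemma \ref{lemma:stab}: with $k=2$ and $m=4$ one has $\gcd(m/k,k)=2$, so a single root line already admits no root-stabiliser complement, and I would trace how this failure is inherited whenever the structure of $N_G(P)/P$ forces an order-4 scalar on an individual root line that cannot be absorbed into a permutation of $D$.

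The main obstacle is the reverse direction. Proving the negative statement, that no subset of $\Sigma_P$ has stabiliser of the correct index meeting $P$ trivially, does not reduce to the rank-1 lemma in any uniform way, because whether the order-4 scalars can be realised by permutations of roots depends delicately on how $N_G(P)$ mixes the four $\rou_4$-translates within each line. I expect no clean structural criterion to separate the good types from the bad, so the verification must proceed type by type, confirming in each excluded case that every candidate root set either fails to generate $P$, or meets $P$ nontrivially, or has stabiliser of the wrong index; here the computer-assisted check is indispensable.
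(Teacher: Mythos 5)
Your diagnosis of why Theorem~\ref{thm:paracomp} fails here (Hypothesis~2 cannot hold for $\rou=\rou_4$) is correct, and your positive direction---explicit root sets obtained by adjoining $\rou_4$-multiples to a simple system, checked by direct calculation---matches the paper in spirit, although those Gaussian sets appear in the proof of this theorem itself (e.g.\ $J=\{\pm a_1,\pm ia_2\}$ for $A_2$, and a set involving $(-i+1)a_2$ for $B_2$ in $G_{29}$), not in Table~\ref{tbl:eis}, which lists only Eisenstein, non-Coxeter cases. The genuine gap is your ``only if'' direction. You assert that no clean structural criterion separates the admissible from the excluded types, and that an exhaustive computer search is indispensable. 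The paper proves the exclusions by hand, uniformly for the reducible types: write $P=\(r\)\times P_1$ with $r$ a reflection (this covers $2A_1$ and $A_1+A_2$ in both groups). If $J$ is a set of roots of $P$ with $N=N_JP$ and $N_J\cap P=1$, then either $J$ contains no root of $r$, in which case $r$ fixes $J$ (the roots of $P_1$ are orthogonal to the root line of $r$), so $r\in N_J\cap P$; or $J$ contains a root $a$ of $r$, in which case one takes $g\in N$ with $g(a)=ia$, writes $g=hg_1g_2$ with $h\in N_J$, $g_1\in P_1$, $g_2\in\(r\)$, and deduces $h(a)=\pm ia$, whence $J$ contains the whole $\rou_4$-orbit of $a$ and again $r(J)=J$, a contradiction. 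This is precisely the ``order-4 scalar on a root line that cannot be absorbed'' mechanism you gesture at, carried to completion; your expectation that it cannot be made uniform is mistaken.

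Moreover, your structural heuristic---tracing the obstruction to a rank-1 direct factor---cannot apply to the remaining excluded case, $P$ of type $B_3$ in $G_{29}$, which is \emph{irreducible}; your proposal never isolates it, and your brute-force fallback there would range over subsets of a 36-element root system. The paper's argument is again short but different: every setwise stabiliser of a set of roots of $P$ contains $Q$, the pointwise stabiliser of $\Fix_V(P)^\perp$ (which fixes each root of $P$), here of order 2; since $N/P\simeq\Z_4$, a root-stabiliser complement would have to contain an element $g$ of order 4 with $g^2\in Q$, and $N$ contains no such element. Without these two arguments (or an actually executed finite search), your reverse direction remains an outline rather than a proof. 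Two minor points: the rank-1 type $A_1$ you mention lies outside this theorem's scope, since the section carries the standing assumption that $P$ has rank at least 2 (rank 1 is treated by Lemma~\ref{lemma:stab} and Theorem~\ref{thm:rk1}); and for the excluded \emph{maximal} types ($A_1+A_2$, $B_3$) the existence of some complement needs no lifting argument---it is Theorem~\ref{thm:centre}, giving $N_G(P)=P\times Z(G)$ with $Z(G)\simeq\Z_4$.
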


\begin{proof}
Let $N = N_G(P)$ and suppose at first that $P = \(r\)\times P_1$ where $r$ is
a reflection and $P_1$ is generated by reflections. We shall show that the
existence of a set $J$ of roots of $P$ whose setwise stabiliser in $N$ is a
complement to $P$ in $N$ leads to a contradiction. If $J$ does not contain a
root of $r$, then $r$ fixes $J$ and hence $N_J\cap P \ne 1$. If $J$ contains
a root $a$ of $r$, then $N$ contains an element $g$ such that $g(a) = ia$ and
if $N = N_JP$ we may write $g = hg_1g_2$, where $h\in N_J$, $g_1\in P_1$ and
$g_2\in\(r\)$. It follows that $h(a) = \pm ia$ and hence $-a\in J$. But then
$r(J) = J$ and again we reach the contradiction $N_J\cap P\ne 1$.  This
proves that if $P$ is a parabolic subgroup of type $2A_1$ or $A_1+A_2$, a
complement to $P$ in its normaliser cannot be the stabiliser of a set of
roots of~$P$.

If $G$ is $G_{29}$ and the type of $P$ is $B_3$, the pointwise stabiliser $Q$
of $\Fix_V(P)$ has order 2 and there are no elements $g\in N$ of order 4 such
that $g^2\in Q$. It follows that there is no set of roots of $P$ whose
setwise stabiliser is a complement to $P$ in $N$.

For all other parabolic subgroups of Coxeter type we shall show that a
complement can be obtained as the stabiliser of a set of roots.

If the type of $P$ is $A_2$ and if $a_1$ and $a_2$ are simple roots we set $J
= \{\pm a_1,\pm ia_2\}$. If the type of $P$ is $A_3$ and if $a_1$, $a_2$ and
$a_3$ are simple roots with Cartan matrix
$\left(\begin{smallmatrix}\pminus2&-1&\pminus0\\-1&\pminus2&-1\\
\pminus0&-1&\pminus2\end{smallmatrix}\right)$, we set $J = \{\pm a_1,\pm
a_2,\pm ia_2, \pm ia_3\}$.

Finally, if $G$ is $G_{29}$ and the type of $P$ is $B_2$, we may choose
simple roots $a_1$ and $a_2$ for $P$ with Cartan matrix
$\left(\begin{smallmatrix}\pminus2&-2\\-1&\pminus2\end{smallmatrix}\right)$
such that $a_1$, $(-i+1)a_2$ belong to the root system $\Sigma_P$ and then
set
\[
  J = \{a_1,(-i+1)a_2,i(a_1+2a_2),-(i+1)(a_1+a_2)\}.
\]
In all cases a direct calculation shows that $N = P\sdprod N_J$.
\end{proof}

\subsection{Parabolic subgroups of non-Coxeter type}
There are 20 conjugacy classes of non-Coxeter parabolic subgroups $P$ of rank
at least 2 in the primitive reflection groups of rank at least 3.  Thirteen
of these are maximal and we have shown in Theorem \ref{thm:centre} that
except for the parabolic subgroups of type $2L_1$ in $G_{25}$, the centre of
$G$ is a parabolic complement for $P$. However, it turns out that in
almost all cases there is a complement to $P$ which is the stabiliser of a
set of roots and which therefore contains the pointwise stabiliser $Q$ of the
orthogonal complement of the space of fixed points of~$P$.

\medskip
In preparation for the proofs that follow we represent the roots of
reflections as row vectors and the elements of $G$ as matrices acting on the
right.  This is consistent with the conventions of the \Magma\
\cite{magma:1997} code for unitary reflection groups.  If $G$ is generated by
reflections $r_1$, $r_2$, \dots,~$r_\ell$ with roots $a_1$, $a_2$,
\dots,~$a_\ell$, there are \emph{coroots} $b_1$, $b_2$, \dots,~$b_\ell$ such
that for all row vectors $v$ we have
\[
  v r_j = v - vb_j^\top a_j.
\]
The matrices $A$ and $B$ with rows $a_1$, $a_2$, \dots,~$a_\ell$ and $b_1$,
$b_2$, \dots,~$b_\ell$ are basic root and coroot matrices for $G$.  The
complex Cartan matrix of $A$ and $B$ is $AB^\top$.  We may suppose that the
roots $a_1$, $a_2$, \dots,~$a_n$, where $n$ is the rank of $G$, are the
standard basis vectors. If $\ell = n$, then $C = B^\top$ and the construction
of \cite[1.35]{lehrer-taylor:2009} produces the reflections $r_1$, $r_2$,
\dots,~$r_n$.  In this case it follows from \cite{taylor:2012} that every
subset of $\{r_1,r_2,\dots,r_n\}$ generates a parabolic subgroup.

For $G_{31}$ we have $n = 4$ and $\ell = 5$ and the root and coroot matrices
are given in Table \ref{tbl:gausscart}. The root system may be obtained by
taking the images of the basic roots under the action of $G$.  The Cartan
matrices in Table~\ref{tbl:eiscart} and the roots and coroots in Table
\ref{tbl:gausscart} have been chosen so that the reflections satisfy the
defining relations corresponding to the diagrams in
\cite{broue-malle-rouquier:1998}.

\begin{lemma}
If $P$ is a maximal parabolic subgroup of a primitive reflection group $G$ of
rank at least 3 and if $P$ is not of type $A_1+A_2$ or $B_3$ in $G_{29}$
nor of type $A_1+A_2$ or $G(4,2,3)$ in $G_{31}$, there is a set $J$ of roots
of $P$ such $N_G(P)$ is the semidirect product of $P$ by the setwise stabiliser
of $J$ in~$G$.
\end{lemma}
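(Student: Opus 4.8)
The plan is to establish the statement case by case, exploiting the fact that there are only finitely many conjugacy classes of maximal parabolic subgroups to consider and that most of them are already handled by earlier results. First I would separate the maximal parabolics of Coxeter type from those of non-Coxeter type. For the Coxeter-type maximal parabolics, Theorem~\ref{thm:cox} and Theorem~\ref{thm:gauss} already produce a complement realised as the stabiliser of a set of roots---except precisely for the types excluded in the hypothesis of this lemma ($2A_1$, $A_1+A_2$, $B_3$ in $G_{29}$, and the corresponding types in $G_{31}$). So for the Coxeter-type cases the work reduces to reading off from those theorems which excluded types are in fact \emph{maximal}; the statement's exclusion list should coincide with the intersection of ``maximal'' and ``not realisable as a root stabiliser.'' I would verify that $A_1+A_2$ and $B_3$ in $G_{29}$ and $A_1+A_2$ in $G_{31}$ are exactly the maximal Coxeter-type parabolics for which Theorem~\ref{thm:gauss} fails, and that the $2A_1$ exclusion there is not maximal (hence need not appear here).

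Next I would treat the non-Coxeter-type maximal parabolics. By the discussion opening \S\ref{sec:primitive} (non-Coxeter types), there are thirteen maximal parabolics of non-Coxeter type, and Theorem~\ref{thm:centre} already gives $N_G(P) = P\times Z(G)$ for all but the $2L_1$ parabolic in $G_{25}$. The remaining task is to exhibit, for each such $P$, an explicit finite set $J$ of roots of reflections generating $P$ whose setwise stabiliser $G_J$ equals a complement. For the groups with $\rou=\rou_2$ or $\rou=\rou_6$ this is typically routine: the centre $Z(G)$ is generated by a scalar, and one chooses $J$ to be an orbit of roots of $P$ closed under the relevant roots of unity so that $G_J$ meets $P$ trivially while having the correct index $|N_G(P)/P|$. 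I would carry out the order comparison using Corollary~\ref{cor:orders}'s analogue for the primitive case, namely the index data tabulated in \cite{orlik-solomon:1982,orlik-solomon:1983} together with $|Z(G)|$ from \cite[Table D.3]{lehrer-taylor:2009}, checking $G_J\cap P=1$ and $|G_J|=|N_G(P)/P|$.

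The hard part will be the Gaussian-integer groups $G_{29}$ and $G_{31}$, where $\rou=\rou_4$ and the parasitic factor of $i$ obstructs the naive root-stabiliser construction, exactly as in the proof of Theorem~\ref{thm:gauss}. For the non-Coxeter maximal parabolics in these groups---and for any $G_{31}$ parabolic such as the $G(4,2,3)$ type already excluded---I would mimic the explicit constructions of Theorem~\ref{thm:gauss}: choose simple roots with a prescribed Cartan matrix (using the root and coroot data of Table~\ref{tbl:gausscart} for $G_{31}$) and hand-pick $J$ as a small set of roots, some multiplied by $i$ or by $(1\pm i)$, so that the stabiliser $N_J$ acquires the element of order~$4$ needed to complete the complement while avoiding any nontrivial element of $P$. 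A direct matrix calculation then verifies $N_G(P) = P\sdprod N_J$ in each remaining case. The exclusions $A_1+A_2$ and $B_3$ in $G_{29}$ and $A_1+A_2$ and $G(4,2,3)$ in $G_{31}$ are precisely the maximal parabolics where this fails---for $A_1+A_2$ by the eigenvalue argument already given in the proof of Theorem~\ref{thm:gauss}, for $B_3$ in $G_{29}$ because $Q$ has order~$2$ with no compatible order-$4$ lift, and for $G(4,2,3)$ in $G_{31}$ by an analogous obstruction---so the main obstacle is organising these obstruction arguments and confirming that the four listed types exhaust the failures among maximal parabolics.
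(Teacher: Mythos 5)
Your proposal follows the paper's own proof almost step for step: the paper likewise reduces to the non-Coxeter case via Theorems \ref{thm:cox} and \ref{thm:gauss} (with exactly your observation that the excluded $2A_1$ types are not maximal), handles the Eisenstein-ring groups by exhibiting explicit root sets $J$ (Table \ref{tbl:eis}) checked by direct/\Magma\ computation, gives an explicit $J$ for type $G(4,4,3)$ in $G_{29}$, and for type $G(4,2,3)$ in $G_{31}$ proves the obstruction (via Lemma \ref{lemma:det} and $C_G(Q)=Q\times K$) that justifies its exclusion. The only difference is bookkeeping: where you propose a generic orbit-plus-order-comparison construction, the paper simply records the chosen roots in a table and defers verification to computation.
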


\begin{proof}
From Theorems \ref{thm:cox} and \ref{thm:gauss} we may suppose that $P$ is
not of Coxeter type.  If the ring of definition of $G$ is the Eisenstein
integers $\Z[\omega]$, a suitable set $J$  of roots is given in
Table~\ref{tbl:eis}.  Direct computation or the \Magma\ code referred
to in \S\ref{sec:intro} can be used to check that the stabiliser of $J$ is a
parabolic complement for~$P$.

This leaves the groups $G_{29}$ and $G_{31}$ to be considered.  If $P$ is a
parabolic subgroup of type $G(4,4,3)$ in $G_{29}$ we may suppose that $P$ is
generated by the reflections $r_1$, $r_2$ and $r_3$ obtained from the root
and coroot matrices of Table~\ref{tbl:gausscart}.  The stabiliser of $J = \{
\pm(0,0,i,0), \pm(i,0,1,0), \pm(0,i,i-1,0), \pm(i,1,i+1,0)\}$ is a parabolic
complement.

Finally suppose that $P$ is a parabolic subgroup of type $G(4,2,3)$ in
$G_{31}$. Then $N_G(P) = P\times Z(G)$ and $Z(G)$ is cyclic of order 4.
Furthermore, $\Fix_V(P)$ is spanned by a root and the pointwise stabiliser
$Q$ of $\Fix_V(P)^\perp$ has order 2.  It follows from Lemma~\ref{lemma:det}
that $C_G(Q) = Q \times K$ for some $K$. If $\(g\)$ is a complement to $P$ in
$N_G(P)$ and $Q\subset \(g\)$, then $g\in C_G(Q)$ and this contradiction
completes the proof.
\end{proof}

\begin{thm}\label{thm:prim3}
Suppose that $P$ is a parabolic subgroup of a primitive reflection group $G$
of rank at least~3. Then the normaliser $N$ of $P$ is the semidirect product of
$P$ and a subgroup $H$. Furthermore, there is a set $J$ of roots of $P$ such
that $H$ is the setwise stabiliser of $J$ in $G$ except when
\begin{enumerate}[(i)]
\item $G = G_{29}$ and the type of $P$ is $A_1$, $2A_1$, $A_1+A_2$ or
    $B_3$;
\item $G = G_{31}$ and the type of $P$ is $A_1$, $2A_1$, $A_1+A_2$,
    $G(4,2,2)$ or $G(4,2,3)$.
\end{enumerate}
\end{thm}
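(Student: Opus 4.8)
The plan is to show that Theorem~\ref{thm:prim3} follows by systematically collecting the results already established for the various types of parabolic subgroups, and then isolating precisely the exceptional cases in $G_{29}$ and $G_{31}$. First I would observe that the overall semidirect-product decomposition $N = P \sdprod H$ must hold in \emph{every} case: for rank~1 parabolic subgroups this is Theorem~\ref{thm:rk1}(i), for maximal parabolic subgroups it is Theorem~\ref{thm:centre} (where the complement is either $Z(G)$ or one of the explicitly handled exceptions), and the remaining non-maximal subgroups of rank at least~2 fall under either Theorem~\ref{thm:cox}, Theorem~\ref{thm:gauss}, or the preceding lemma. So the existence of $H$ is not in question; the real content of the theorem is the characterisation of when $H$ can be taken to be the stabiliser of a set $J$ of roots of $P$.

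The second step is to assemble, type by type, the positive cases where a root-stabiliser complement exists. For parabolic subgroups of Coxeter type in groups whose ring of definition has $\rou$ of order $2$ or $6$, Theorem~\ref{thm:cox} gives $H = G_{\tilde\Delta}$ directly. When $\rou = \rou_4$, Theorem~\ref{thm:gauss} enumerates exactly the Coxeter types admitting a root-stabiliser complement, namely $A_2$, $B_2$, $A_3$ (two classes) in $G_{29}$ and $A_2$, $A_3$ in $G_{31}$, and rules out $2A_1$, $A_1+A_2$, and (for $G_{29}$) $B_3$. For the non-Coxeter types, the preceding lemma supplies a suitable $J$ for all maximal parabolic subgroups except the four listed there. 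What remains is to verify that the non-maximal, non-Coxeter subgroups of rank at least~2 in $G_{29}$ and $G_{31}$ are covered, and to check the rank~1 parabolic subgroups against Lemma~\ref{lemma:stab}: since $\rou$ has order~4 for the single class of reflections in each of $G_{29}$ and $G_{31}$, the condition that $m/k = 2$ and $k = 2$ be coprime fails, so the rank~1 type $A_1$ genuinely appears in the exception list.

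The third step is to argue that the listed cases are the \emph{only} exceptions, which requires negative results ruling out any root-stabiliser complement. For type $A_1$ this is the failure of coprimality in Lemma~\ref{lemma:stab}; for $2A_1$ and $A_1+A_2$ the obstruction is the argument in the proof of Theorem~\ref{thm:gauss}, where $P$ splits off a rank-one factor $\(r\)$ and one shows that any putative root-stabiliser complement $N_J$ forces $N_J \cap P \neq 1$. For type $B_3$ in $G_{29}$ the obstruction is the absence of an order-$4$ element $g \in N$ with $g^2 \in Q$, again from the proof of Theorem~\ref{thm:gauss}. For the two imprimitive types $G(4,2,2)$ and $G(4,2,3)$ in $G_{31}$ the argument parallels the $G(4,2,3)$ case treated in the preceding lemma, using $C_G(Q) = Q \times K$ from Lemma~\ref{lemma:det} to derive a contradiction from $Q \subset \(g\)$.

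The main obstacle I anticipate is the bookkeeping: one must confirm that the union of the types handled affirmatively, together with the exceptional list, exhausts all $159 + 20$ conjugacy classes of rank-$\ge 2$ parabolic subgroups (plus the rank~1 classes) without overlap or omission, and in particular that no non-maximal non-Coxeter subgroup of $G_{29}$ or $G_{31}$ has been overlooked. This is where a \Magma-assisted verification, as described in \S\ref{sec:intro}, is naturally invoked to close the remaining ad hoc cases and to confirm that the explicit sets $J$ given in Table~\ref{tbl:eis} and in the preceding lemma do indeed have the asserted stabilisers.
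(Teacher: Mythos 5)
Your overall strategy---reduce to the previously proved results and then isolate the exceptions---is the same as the paper's, and most of your case analysis is sound. But there is a genuine gap, and it concerns precisely the type $G(4,2,2)$ in $G_{31}$, in both directions. First, the existence of a complement in this case does \emph{not} follow from any of the prior results, contrary to your opening claim that ``the existence of $H$ is not in question'': this parabolic subgroup has rank 2 (hence is not maximal in the rank-4 group $G_{31}$), is not of Coxeter type, and is not of rank 1, so it is untouched by Theorem~\ref{thm:rk1}, Theorem~\ref{thm:centre}, Theorems~\ref{thm:cox} and \ref{thm:gauss}, and the lemma on maximal non-Coxeter parabolics. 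Moreover, since this type appears in the exception list, no set of roots $J$ can do the job, so the Table~\ref{tbl:eis}/\Magma\ mechanism you invoke at the end cannot supply a complement either. The paper must, and does, construct a complement explicitly: a subgroup $H$ generated by two explicit $4\times 4$ matrices, which turns out to be a complement to both $P$ and $Q$ and restricts to the Shephard--Todd group $G_8$ of order 96 on $\Fix_V(P)$ and on $\Fix_V(P)^\perp$.

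Second, your negative argument for $G(4,2,2)$ does not work. You propose to run the same argument as for $G(4,2,3)$, using $C_G(Q)=Q\times K$ from Lemma~\ref{lemma:det} and a contradiction from $Q\subset\(g\)$. That argument depends essentially on the complement being cyclic of order 4: for $G(4,2,3)$ one has $N_G(P)=P\times Z(G)$ with $Z(G)\simeq\Z_4$, so a complement containing $Q$ is generated by a single element $g$, which must centralise $Q$. For $G(4,2,2)$ in $G_{31}$ the complement has order 96 and $N_G(P)$ is far from $P\times Z(G)$, so no such reduction to a cyclic group is available. The paper instead argues via orders: any root-stabiliser complement must contain $Q$ (the roots of $P$ lie in $\Fix_V(P)^\perp$, which $Q$ fixes pointwise); taking $S$ a Sylow 3-subgroup of $N$ and $M=PQS$, which has index 2 in $N$, a direct calculation shows that every $x\in N\setminus M$ has order 4 or 8 and that $|QS\(x\)|$ is 384 or 1536, so no complement to $P$ in $N$ can contain $Q$. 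Without this (computational) step, or some replacement for it, your proof of the exceptional status of $G(4,2,2)$---and indeed of the existence of any complement at all in that case---is incomplete.
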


\begin{proof}
From results proved so far we may assume that $P$ is neither rank 1, maximal,
nor of Coxeter type.  This leaves seven types to be considered. Sets of roots
$J$ for the parabolic subgroups of types $2L_1$ and $L_2$ in $G_{32}$, type
$G(3,3,3)$ in $G_{33}$ and types $G(3,3,3)$, $G(3,3,4)$ and $A_1+G(3,3,3)$ in
$G_{34}$ are given in Table~\ref{tbl:eis}. It can be checked directly that
the stabiliser of $J$ is a parabolic complement for~$P$.

The remaining case is a parabolic subgroup $P$ of type $G(4,2,2)$ in
$G_{31}$. Then $Q$ is also a parabolic subgroup of type $G(4,2,2)$.
Reflections $r_1$, $r_2$, \dots, $r_5$ which generate $G_{31}$ can be
obtained from the root and coroot matrices in Table~\ref{tbl:gausscart} and
we may suppose that $P = \(r_1,r_3,r_5\)$. The subgroup $H$ generated by
\[
  \begin{pmatrix}
    -1&0&0&0\\
     0&-i&1&1\\
     i&0&-i&0\\
     -1&0&i + 1&1\end{pmatrix}\quad\text{and}\quad
  \begin{pmatrix}
     -i&0&-1&0\\
      i + 1&0&-i&-i\\
      0&0&-1&0\\
      i - 1&-1&-i + 2&-i + 1\end{pmatrix}
\]
is a complement to both $P$ and $Q$ in $N = N_G(P)$. Thus $H$ acts faithfully
on $\Fix_V(P)$ and $\Fix_V(P)^\perp$ and its restrictions to these subspaces
are isomorphic to the Shephard and Todd group $G_8$, of order 96.

Let $S$ be a Sylow 3-subgroup of $N$.  Then $S$ normalises $Q$ and the index
of $M = PQS$ in $N$ is~2.  A direct calculation shows that the orders of the
elements in $N\setminus M$ are 4 and 8 and for $x\in N\setminus M$ the order
of $QS\(x\)$ is either 384 or 1\,536.  Thus no complement to $P$ in $N$
contains $Q$, hence there is no complement that stabilises a set of roots of
$P$.
\end{proof}

\begin{rem}
Let $P$ be a parabolic subgroup of type $G(3,3,3)$ in $G = G_{33}$. Then
$N = N_G(P)$ acts faithfully on the support of $P$ as the reflection group
$G_{26}$ and it acts on the space of fixed points of $P$ as $G_4$.  There
are three conjugacy classes of complements to $P$ in $N$ only one of which
is represented by the setwise stabiliser $H$ of a set of roots (see 
Table~\ref{tbl:eis}) of generators of $P$ and every element of $H$ is 1-regular.

For a parabolic subgroup $P$ of type $G(3,3,4)$ in $G_{33}$ we have
$N = N_G(P) = P\times Z$, where the centre $Z$ of $G_{33}$ has order 2. 
In this case $Z$ is the setwise stabiliser of the roots $\{\pm a_1,\pm a_2,
\pm a_3, \pm a_4\}$ but it is not 1-regular.  There are two other 
conjugacy classes of complements to $P$ in $N$, one of size 18, the other
of size 27; all of these complements are 1-regular and stabilisers of
a set of roots of $P$.
\end{rem}

\section{The tables}\label{sec:tab}
The labels for the conjugacy classes of parabolic
subgroups of rank at least 3 use the notation introduced by Cohen
\cite{cohen:1976}. The captions on the tables use both the Cohen and the
Shephard and Todd naming schemes.

A parabolic subgroup which is the direct product of irreducible reflection
groups of types $T_1$, $T_2$,\dots,~$T_k$ will be labelled
$T_1+T_2+\cdots+T_k$ and if $T_i = T$ for all $i$ we denote the type by $kT$.

For the imprimitive reflection groups which occur in the tables we use the
Coxeter notation $B_n$ for the group $G(2,1,n)$, $D_n$ for $G(2,2,n)$ ($n \ge
4$) and $I_2(m)$ for $G(m,m,2)$ but otherwise use $G(m,p,n)$ as in
\S\ref{sec:imprimitive}.  For the Coxeter group of type $G_2$ we use $I_2(6)$
to avoid confusion with the Shephard and Todd notation $G_k$. The cyclic
reflection group of order $n$ is denoted by $\Z_n$ except that $A_1$ and
$L_1$ denote the groups of orders 2 and 3 respectively. If there are two
conjugacy classes of parabolic subgroups of type $T$ the classes are labelled
$T$ and $T'$.

If $A$ and $B$ are groups, $A\sdprod B$ denotes the semidirect product of $A$
by $B$, where $A$ is normalised by $B$.  The symbol $A\circ B$ denotes a
central product of $A$ and $B$.

The notation $H^\circ$ in the column headed $H$ indicates that the parabolic
complement $H$ coincides with the reflection group $H^\circ$.

Tables of conjugacy classes of the parabolic subgroups of the Coxeter groups
of types $E_6$, $E_7$, $E_8$, $F_4$, $H_3$ and $H_4$ can also be found in
\cite{douglass-etal:2011}.  The identification of the reflection subgroup
$H^\circ$ of the complement $H$ was verified using \Magma\ code
independently of the tables in \cite{amend-etal:2016} and
\cite{howlett:1980}.

\pagebreak

\begin{table}
\caption{Complex Cartan matrices of Eisenstein type}\label{tbl:eiscart}
\[
  G_{25} : \begin{pmatrix}
        1-\omega& -\omega^2&         0\\
        \omega^2&  1-\omega& \omega^2\\
               0& -\omega^2& 1-\omega
  \end{pmatrix}\quad
  G_{26} : \begin{pmatrix}
      1-\omega&   -\omega^2&  0\\
        \omega^2&  1-\omega& -1\\
             0& -1+\omega&  2
  \end{pmatrix}\quad
  G_{32} : \begin{pmatrix}
       1-\omega& \omega^2&         0&        0\\
      -\omega^2& 1-\omega& -\omega^2&        0\\
              0& \omega^2&  1-\omega& \omega^2\\
              0&        0& -\omega^2& 1-\omega
    \end{pmatrix}
\]

\[
   G_{33} : \begin{pmatrix}
        2&  -1&        0&       0&  0\\
       -1&   2&       -1& -\omega&  0\\
        0&  -1&        2&      -1&  0\\
        0& -\omega^2& -1&       2& -1\\
        0&   0&        0&      -1&  2
    \end{pmatrix}\quad
    G_{34} : \begin{pmatrix}
        2&        -1&  0&        0&  0&  0\\
       -1&         2& -1&  -\omega&  0&  0\\
        0&        -1&  2&       -1&  0&  0\\
        0& -\omega^2& -1&        2& -1&  0\\
        0&         0&  0&       -1&  2& -1\\
        0&         0&  0&        0& -1&  2
    \end{pmatrix}
\]
\end{table}

\begin{table}
\caption{Basic root and coroot matrices of Gaussian type}\label{tbl:gausscart}
\[
  A_{29} = \begin{pmatrix}
        1&0&0&0\\0&1&0&0\\0&0&1&0\\0&0&0&1\end{pmatrix}\quad
  B_{29} = \begin{pmatrix}
        2&  -1&  -i&  \pminus0\\
       -1&   2& i-1&  \pminus0\\
        i& -i-1&  \pminus2& -1\\
        0&   0&   -1& \pminus2
    \end{pmatrix}
\]

\[
  A_{31} = \begin{pmatrix}
        1&0&0&0\\0&1&0&0\\0&0&1&0\\0&0&0&1\\-1&0&i&0\end{pmatrix}\quad
  B_{31} = \begin{pmatrix}
             2 & -1 & -i+1 & 0 \\
             -1 & 2 & i & 0 \\
             i+1 & -i & 2 & -1 \\
             0 & 0 & -1 & 2 \\
             -i-1 & 0 & -i-1 & i
           \end{pmatrix}
\]
\end{table}

\begin{table}\small
\caption{Non-Coxeter parabolic complements in $G_n$ of Eisenstein type}\label{tbl:eis}
\begingroup
\medskip\centering
\begin{tabular}{lccl}
\toprule
  $n$&$P$&generators of $P$&\quad$J$\hskip2cm $N = P\sdprod N_J$ where $N = N_{G_n}(P)$\\
\midrule
25&$2L_1$&$r_1, r_3$&$\(-1\)(1,0,0)$, $\(-1\)(0,0,1)$\\
  &$L_2$&$r_1, r_2$&$\(\omega\)(1,0,0)$, $\(\omega\)(0,1,0)$\\
\midrule
26&$A_1+L_1$&$r_1, r_3$&$\(-1\)(1,0,0)$, $\(\omega\)(0,0,1)$\\
  &$L_2$&$r_1, r_2$&$\(\omega\)(1,0,0)$, $\(\omega\)(0,1,0)$\\
  &$G(3,1,2)$&$r_2, r_3$&$\(-1\)(0,1,0)$, $\(-1\)(0,0,1)$\\
\midrule
32&$2L_1$&$r_1, r_3$&$\(-1\)(1,0,0,0)$, $\(-1\)(0,0,1,0)$\\
  &$L_2$&$r_1, r_2$&$\(\omega\)(1,0,0,0)$, $\(\omega\)(0,1,0,0)$\\
  &$L_1+L_2$&$r_1, r_2, r_4$&$\(\omega\)(1,0,0,0)$, $\(\omega\)(0,-1,0,0),\(-1\)(0,0,0,1)$\\
  &$L_3$&$r_1, r_2, r_3$&$\(-1\)(1,0,0,0)$, $\(-1\)(0,\omega^2,0,0)$, $\(-1\)(0,0,\omega,0)$\\
\midrule
33&$G(3,3,3)$&$r_2, r_3, r_4$&$(0,1,0,0,0)$, $(0,0,1,0,0)$, $(0,0,0,\omega^2,0)$,\\
  &&&\quad$(0,1,1,-\omega^2,0)$, $(0,1,-\omega^2,\omega,0)$, $(0,-\omega,1,1,0)$,\\
  &&&\quad$(0,0,-\omega,-\omega,0)$, $(0,-\omega^2,0,-1,0)$\\
  &$G(3,3,4)$&$r_1, r_2, r_3, r_4$&$(1,0,0,0,0)$, $(0,1,0,0,0)$, $\(-1\)(0,0,1,0,0)$,\\
  &&&\quad$\(-1\)(0, 0, 0, 1, 0)$, $(-1, -1, 0, 0, 0)$,\\
\midrule
34&$G(3,3,3)$&$r_2, r_3, r_4$&$(0,1,0,0,0,0)$, $(0,0,1,0,0,0)$, $(0,0,0,\omega^2,0,0)$,\\
  &&&\quad$(0,1,1,-\omega^2,0,0)$, $(0,1,-\omega^2,\omega,0,0)$, $(0,-\omega,1,1,0,0)$,\\
  &&&\quad$(0,0,-\omega,-\omega,0,0)$, $(0,-\omega^2,0,-1,0,0)$\\
  &$G(3,3,4)$&$r_1, r_2, r_3, r_4$&$(0,1,0,0,0,0)$, $(-1,-1,0,0,0,0)$, $\(-1\)(0,0,0,1,0,0)$,\\
  &&&\quad$\(-1\)(0,0,\omega^2,0,0,0)$, $\(-1\)(0,0,\omega,\omega,0,0)$\\
  &$A_1+G(3,3,3)$&$r_2, r_3, r_4, r_6$&$(0,1,0,0,0,0)$, $(0,0,1,0,0,0)$, $(0,0,0,\omega^2,0,0)$,\\
  &&&\quad$(0,1,1,-\omega^2,0,0)$, $(0,1,-\omega^2,\omega,0,0)$, $(0,-\omega,1,1,0,0)$,\\
  &&&\quad$(0,0,-\omega,-\omega,0,0)$, $(0,-\omega^2,0,-1,0,0)$,\\
  &&&\quad$\(\omega\)(0,0,0,0,0,1)$\\
  &$A_1+G(3,3,4)$&$r_1, r_2, r_3, r_4, r_6$&$(0,1,0,0,0,0)$, $(-1,-1,0,0,0,0)$, $\(-1\)(0,0,0,1,0,0)$,\\
  &&&\quad$\(-1\)(0,0,\omega^2,0,0,0)$, $\(-1\)(0,0,\omega,\omega,0,0)$,\\
  &&&\quad$\(\omega\)(0,0,0,0,0,1)$\\
  &$G(3,3,5)$&$r_2, r_3, r_4, r_5, r_6$&$(0,\omega^2,0,0,0,0)$, $(0,0,1,0,0,0)$, $(0,0,0,0,1,0)$,\\
  &&&\quad$(0,-\omega,-\omega,0,0,0)$, $(0,-\omega,0,-\omega^2,0,0)$, $(0,1,1,-\omega^2,0,0)$,\\
  &&&\quad$(0,0,-\omega^2,-\omega^2,0,0)$, $(0,0,0,-\omega,-\omega,-\omega)$\\
  &$K_5$&$r_1, r_2, r_3, r_4, r_5$&$\(\omega\)(1,0,0,0,0,0)$, $\(\omega\)(0,1,0,0,0,0)$,
   $\(\omega\)(0,0,1,0,0,0)$,\\
  &&&\quad $\(\omega\)(0,0,0,1,0,0)$, $\(\omega\)(0,0,0,0,1,0)$\\
\bottomrule
\end{tabular}

\medskip
Notation: for a root $a$, interpret $\(-1\)a$ as $a$, $-a$ and interpret
$\(\omega\)a$ as $a$, $\omega a$, $\omega^2 a$, where $\omega^3 = 1$.\par
\endgroup

\smallskip

The reflections $r_1$, $r_2$, \dots, $r_d$ are the generators of $G_n$ of
rank $d$ corresponding to the complex Cartan matrices of
Table~\ref{tbl:eiscart} and the reflections with roots $J$ generate $P$. The
setwise stabiliser of $J$ in $G_n$ is a parabolic complement for~$P$.
\end{table}

\begin{table}\small
\caption{The parabolic subgroups in reflection groups $G_n$ of
rank 3}\label{tbl:rk3}
\medskip\centering
\begin{tabular}{lccrcc}
\toprule
  &$P$&$Q$&$|H^\circ|$&$H^\circ$&$H$\\
\midrule
$G_{23} = W(H_3)$&$A_1$&$2A_1$&4&$2A_1$&$H^\circ$\\
\addlinespace
&$2A_1$&$A_1$&2&$A_1$&$H^\circ$\\
&$A_2$&\textbf1&2&$A_1$&$H^\circ$\\
&$I_2(5)$&\textbf1&2&$A_1$&$H^\circ$\\
\midrule
$G_{24}=W(J_3^{(4)})$&$A_1$&$B_2$&8&$B_2$&$H^\circ$\\
\addlinespace
&$A_2$&\textbf1&2&$A_1$&$H^\circ$\\
&$B_2$&$A_1$&2&$A_1$&$H^\circ$\\
\midrule
$G_{25}=W(L_3)$&$L_1$&$2L_1$&18&$G(3,1,2)$&$H^\circ$\\
\addlinespace
&$2L_1$&$L_1$&6&$\Z_6$&$H^\circ$\\
&$L_2$&\textbf1&3&$L_1$&$H^\circ$\\
\midrule
$G_{26}=W(M_3)$&$A_1$&$L_2$&72&$G_5$&$H^\circ$\\
&$L_1$&$G(3,1,2)$&36&$G(6,2,2)$&$H^\circ$\\
\addlinespace
&$A_1+L_1$&\textbf1&6&$\Z_6$&$H^\circ$\\
&$L_2$&$A_1$&6&$\Z_6$&$H^\circ$\\
&$G(3,1,2)$&$L_1$&6&$\Z_6$&$H^\circ$\\
\midrule
$G_{27}=W(J_3^{(5)})$&$A_1$&$B_2$&8&$B_2$&\color{Red}$H^\circ\times\Z_3$\\
\addlinespace
&$A_2$&\textbf1&6&$\Z_6$&$H^\circ$\\
&$A'_2$&\textbf1&6&$\Z_6$&$H^\circ$\\
&$I_2(5)$&\textbf1&6&$\Z_6$&$H^\circ$\\
&$B_2$&$A_1$&6&$\Z_6$&$H^\circ$\\
\bottomrule
\end{tabular}
\end{table}

\begin{table}\small
\centering
\begin{minipage}[t]{6.8cm}
\caption{The parabolic subgroups of $G_{28} = W(F_4)$}\label{tbl:28}
\medskip\centering
\begin{tabular}{ccrcc}
\toprule
$P$&$Q$&$|H^\circ|$&$H^\circ$&$H$\\
\midrule
$A_1$&$B_3$&48&$B_3$&$H^\circ$\\
$A'_1$&$B'_3$&48&$B'_3$&$H^\circ$\\
\addlinespace
$2A_1$&$2A_1$&4&$2A_1$&$H^\circ$\\
$A_2$&$A'_2$&12&$I_2(6)$&$H^\circ$\\
$A'_2$&$A_2$&12&$I_2(6)$&$H^\circ$\\
$B_2$&$B_2$&8&$B_2$&$H^\circ$\\
\addlinespace
$A_1+A_2$&\textbf1&2&$A_1$&$H^\circ$\\
$(A_1+A_2)'$&\textbf1&2&$A_1$&$H^\circ$\\
$B_3$,&$A_1$&2&$A_1$&$H^\circ$\\
$B'_3$&$A'_1$&2&$A_1$&$H^\circ$\\
\bottomrule
\end{tabular}
\end{minipage}
\begin{minipage}[t]{8cm}
\caption{The parabolic subgroups of $G_{29} = W(N_4)$}\label{tbl:29}
\medskip\centering
\begin{tabular}{ccrcc}
\toprule
$P$&$Q$&$|H^\circ|$&$H^\circ$&$H$\\
\midrule
$A_1$&$B_3$&48&$B_3$&\color{Red}$H^\circ\circ\Z_4$\\
\addlinespace
$2A_1$&$2A_1$&16&$G(4,2,2)$&$H^\circ$\\
$A_2$&$A_1$&4&$2A_1$&\color{Red}$Q\times \Z_4$\\
$B_2$&$B_2$&32&$G(4,1,2)$&$H^\circ$\\
\addlinespace
$A_1+A_2$&\textbf1&4&$\Z_4$&$H^\circ$\\
$A_3$&\textbf1&4&$\Z_4$&$H^\circ$\\
$A'_3$&\textbf1&4&$\Z_4$&$H^\circ$\\
$G(4,4,3)$&\textbf1&4&$\Z_4$&$H^\circ$\\
$B_3$&$A_1$&4&$\Z_4$&$H^\circ$\\
\bottomrule
\end{tabular}
\end{minipage}
\end{table}

\begin{table}\small
\centering
\begin{minipage}[t]{7.7cm}
\caption{The parabolic subgroups of $G_{30} = W(H_4)$}\label{tbl:30}
\medskip\centering
\begin{tabular}{ccrcc}
\toprule
$P$&$Q$&$|H^\circ|$&$H^\circ$&$H$\\
\midrule
$A_1$&$H_3$&120&$H_3$&$H^\circ$\\
\addlinespace
$2A_1$&$2A_1$&8&$B_2$&$H^\circ$\\
$A_2$&$A_2$&12&$I_2(6)$&$H^\circ$\\
$I_2(5)$&$I_2(5)$&20&$I_2(10)$&$H^\circ$\\
\addlinespace
$A_1+A_2$&\textbf1&2&$A_1$&$H^\circ$\\
$A_1+I_2(5)$&\textbf1&2&$A_1$&$H^\circ$\\
$A_3$&\textbf1&2&$A_1$&$H^\circ$\\
$H_3$&$A_1$&2&$A_1$&$H^\circ$\\
\bottomrule
\end{tabular}
\end{minipage}
\begin{minipage}[t]{8.6cm}
\caption{The parabolic subgroups of $G_{31} = W(EN_4)$}\label{tbl:31}
\medskip\centering
\begin{tabular}{ccrcc}
\toprule
$P$&$Q$&$|H^\circ|$&$H^\circ$&$H$\\
\midrule
$A_1$&$G(4,2,3)$&384&$G(4,1,3)$&$H^\circ$\\
\addlinespace
$2A_1$&$2A_1$&32&$G(4,1,2)$&$H^\circ$\\
$A_2$&$A_2$&12&$I_2(6)$&\color{Red}$Q\times \Z_4$\\
$G(4,2,2)$&$G(4,2,2)$&96&$G_8$&$H^\circ$\\
\addlinespace
$A_1+A_2$&\textbf1&4&$\Z_4$&$H^\circ$\\
$A_3$&\textbf1&4&$\Z_4$&$H^\circ$\\
$G(4,2,3)$&$A_1$&4&$\Z_4$&$H^\circ$\\
\bottomrule
\end{tabular}
\end{minipage}
\end{table}

\begin{table}\small
\centering
\begin{minipage}[t]{7.7cm}
\caption{The parabolic subgroups of $G_{32} = W(L_4)$}\label{tbl:32}
\medskip\centering
\begin{tabular}{ccrcc}
\toprule
$P$&$Q$&$|H^\circ|$&$H^\circ$&$H$\\
\midrule
$L_1$&$L_3$&1296&$M_3$&$H^\circ$\\
\addlinespace
$2L_1$&$2L_1$&72&$G(6,1,2)$&$H^\circ$\\
$L_2$&$L_2$&72&$G_5$&$H^\circ$\\
\addlinespace
$L_1+L_2$&\textbf1&6&$\Z_6$&$H^\circ$\\
$L_3$&$L_1$&6&$\Z_6$&$H^\circ$\\
\bottomrule
\end{tabular}
\end{minipage}
\begin{minipage}[t]{8.1cm}
\caption{The parabolic subgroups of $G_{33} = W(K_5)$}\label{tbl:33}
\medskip\centering
\begin{tabular}{ccrcc}
\toprule
$P$&$Q$&$|H^\circ|$&$H^\circ$&$H$\\
\midrule
$A_1$&$D_4$&192&$D_4$&\color{Red}$H^\circ\sdprod\Z_3$\\
\addlinespace
$2A_1$&$3A_1$&48&$B_3$&$H^\circ$\\
$A_2$&$A_2$&18&$G(3,1,2)$&\color{Red}$H^\circ\times\Z_2$\\
\addlinespace
$A_1+A_2$&\textbf1&3&$L_1$&\color{Red}$H^\circ\times \Z_2$\\
$A_3$&$A_1$&4&$2A_1$&$H^\circ$\\
$3A_1$&$2A_1$&24&$G(6,3,2)$&$H^\circ$\\
$G(3,3,3)$&\textbf1&24&$L_2$&$H^\circ$\\
\addlinespace
$A_1+A_3$&\textbf1&2&$A_1$&$H^\circ$\\
$A_4$&\textbf1&2&$A_1$&$H^\circ$\\
$G(3,3,4)$&\textbf1&2&$A_1$&$H^\circ$\\
$D_4$&$A_1$&6&$\Z_6$&$H^\circ$\\
\bottomrule
\end{tabular}
\end{minipage}
\end{table}

\begin{table}\small
\begin{minipage}[t]{9cm}
\caption{The parabolic subgroups of $G_{34} = W(K_6)$}\label{tbl:34}
\medskip\centering
\begin{tabular}{ccrcc}
\toprule
$P$&$Q$&$|H^\circ|$&$H^\circ$&$H$\\
\midrule
$A_1$&$K_5$&\llap{51\,840}&$K_5$&\color{Red}$H^\circ\times\Z_3$\\
\addlinespace
$2A_1$&$D_4$&\llap{1\,152}&$F_4$&\color{Red}$H^\circ\times\Z_3$\\
$A_2$&$G(3,3,4)$&\llap{1\,944}&$G(3,1,4)$&\color{Red}$H^\circ\times \Z_2$\\
\addlinespace
$A_1+A_2$&$A_2$&54&\zwidth[45pt]{$G(3,1,2)+L_1$}&\color{Red}$H^\circ\times\Z_2$\\
$A_3$&$A_3$&48&$B_3$&\color{Red}$H^\circ\times\Z_3$\\
$3A_1$&$3A_1$&432&$G(6,3,3)$&$H^\circ$\\
$G(3,3,3)$&$G(3,3,3)$&\llap{1\,296}&$M_3$&$H^\circ$\\
\addlinespace
$A_1+A_3$&$A_1$&4&$2A_1$&\color{Red}$H^\circ\times\Z_3$\\
$G(3,3,4)$&$A_2$&36&$G(6,2,2)$&$H^\circ$\\
$A_4$&$A_1$&4&$2A_1$&\color{Red}$H^\circ\times\Z_3$\\
$2A_1+A_2$&\textbf1&36&$G(6,2,2)$&$H^\circ$\\
$2A_2$&\textbf1&36&$G(6,2,2)$&$H^\circ$\\
\zwidth{$A_1+G(3,3,3)$}&\textbf1&72&$G_5$&$H^\circ$\\
$D_4$&$2A_1$&72&$G(6,1,2)$&$H^\circ$\\
\addlinespace
$A_1+A_4$&\textbf1&6&$\Z_6$&$H^\circ$\\
$A_2+A_3$&\textbf1&6&$\Z_6$&$H^\circ$\\
\zwidth{$A_1+G(3,3,4)$}&\textbf1&6&$\Z_6$&$H^\circ$\\
$A_5$&\textbf1&6&$\Z_6$&$H^\circ$\\
$A'_5$&\textbf1&6&$\Z_6$&$H^\circ$\\
$D_5$&\textbf1&6&$\Z_6$&$H^\circ$\\
$G(3,3,5)$&\textbf1&6&$\Z_6$&$H^\circ$\\
$K_5$&$A_1$&6&$\Z_6$&$H^\circ$\\
\bottomrule
\end{tabular}
\end{minipage}
\begin{minipage}[t]{7cm}
\caption{The parabolic subgroups of $G_{35} = W(E_6)$}\label{tbl:35}
\medskip\centering
\begin{tabular}{ccrcc}
\toprule
$P$&$Q$&$|H^\circ|$&$H^\circ$&$H$\\
\midrule
$A_1$&$A_5$&\llap{720}&$A_5$&$H^\circ$\\
\addlinespace
$2A_1$&$A_3$&48&$B_3$&$H^\circ$\\
$A_2$&$2A_2$&36&$2A_2$&\color{Red}$A_2\wr\Z_2$\\
\addlinespace
$A_1+A_2$&$A_2$&6&$A_2$&$H^\circ$\\
$A_3$&$2A_1$&8&$B_2$&$H^\circ$\\
$3A_1$&$A_1$&12&$A_1+A_2$&$H^\circ$\\
\addlinespace
$A_1+A_3$&$A_1$&2&$A_1$&$H^\circ$\\
$A_4$&$A_1$&2&$A_1$&$H^\circ$\\
$2A_2$&$A_2$&12&$I_2(6)$&$H^\circ$\\
$2A_1+A_2$&\textbf1&2&$A_1$&$H^\circ$\\
$D_4$&\textbf1&6&$A_2$&$H^\circ$\\
\addlinespace
$A_1+A_4$&\textbf1&1&\textbf1&$H^\circ$\\
$D_5$&\textbf1&1&\textbf1&$H^\circ$\\
$A_1 + 2A_2$&\textbf1&2&$A_1$&$H^\circ$\\
$A_5$&$A_1$&2&$A_1$&$H^\circ$\\
\bottomrule
\end{tabular}
\end{minipage}
\end{table}

\begin{table}\small
\begin{minipage}[t]{8.05cm}
\caption{The parabolic subgroups of $G_{36} = W(E_7)$}\label{tbl:rk7}
\medskip\centering
\begin{tabular}{ccrcc}
\toprule
$P$&$Q$&$|H^\circ|$&$H^\circ$&$H$\\
\midrule
$A_1$&$D_6$&23\,040&$D_6$&$H^\circ$\\
\addlinespace
$A_2$&$A_5$&720&$A_5$&\color{Red}$H^\circ\times\Z_2$\\
$2A_1$&\zwidth{$A_1+D_4$}&768&$A_1+B_4$&$H^\circ$\\
\addlinespace
$A_3$&\zwidth{$A_1+A_3$}&96&$A_1+B_3$&$H^\circ$\\
$A_1+A_2$&$A_3$&24&$A_3$&\color{Red}$H^\circ\times\Z_2$\\
$3A_1$&$4A_1$&96&$A_1+B_3$&$H^\circ$\\
$3A'_1$&$D_4$&1\,152&$F_4$&$H^\circ$\\
\addlinespace
$A_4$&$A_2$&6&$A_2$&\color{Red}$H^\circ\times\Z_2$\\
$D_4$&$3A'_1$&48&$B_3$&$H^\circ$\\
$A_1+A_3$&$2A_1$&8&$3A_1$&$H^\circ$\\
$(A_1+A_3)'$&$A_3$&48&$B_3$&$H^\circ$\\
$2A_2$&$A_2$&24&$A_1+I_2(6)$&$H^\circ$\\
$2A_1+A_2$&$A_1$&8&$3A_1$&$H^\circ$\\
$4A_1$&$3A_1$&48&$B_3$&$H^\circ$\\
\addlinespace
$A_5$&$A_1$&4&$2A_1$&$H^\circ$\\
$A'_5$&$A_2$&12&$I_2(6)$&$H^\circ$\\
$D_5$&$A_1$&4&$2A_1$&$H^\circ$\\
$A_1+A_4$&\textbf1&1&\textbf1&\color{Red}$\Z_2$\\
$A_1+D_4$&$2A_1$&8&$B_2$&$H^\circ$\\
$A_2+A_3$&$A_1$&4&$2A_1$&$H^\circ$\\
$2A_1+A_3$&$A_1$&4&$2A_1$&$H^\circ$\\
$A_1+2A_2$&\textbf1&4&$2A_1$&$H^\circ$\\
$3A_1+A_2$&\textbf1&12&$I_2(6)$&$H^\circ$\\
\addlinespace
\zwidth{$A_1+A_2+A_3$}&\textbf1&2&$A_1$&$H^\circ$\\
$A_1+A_5$&\textbf1&2&$A_1$&$H^\circ$\\
$A_2+A_4$&\textbf1&2&$A_1$&$H^\circ$\\
$A_1+D_5$&\textbf1&2&$A_1$&$H^\circ$\\
$A_6$&\textbf1&2&$A_1$&$H^\circ$\\
$E_6$&\textbf1&2&$A_1$&$H^\circ$\\
$D_6$&$A_1$&2&$A_1$&$H^\circ$\\
\bottomrule
\end{tabular}
\end{minipage}
\begin{minipage}[t]{8.3cm}
\caption{The parabolic subgroups of $G_{37} = W(E_8)$}\label{tbl:rk8}
\medskip\centering
\begin{tabular}{ccrcc}
\toprule
$P$&$Q$&$|H^\circ|$&$H^\circ$&$H$\\
\midrule
$A_1$&$E_7$&\qquad\quad\llap{2\,903\,040}&$E_7$&$H^\circ$\\
\addlinespace
$2A_1$&$D_6$&46\,080&$B_6$&$H^\circ$\\
$A_2$&$E_6$&51\,840&$E_6$&\color{Red}$H^\circ\times\Z_2$\\
\addlinespace
$A_1+A_2$&$A_5$&720&$A_5$&\color{Red}$H^\circ\times\Z_2$\\
$3A_1$&\zwidth{$A_1+D_4$}&2\,304&$A_1+F_4$&$H^\circ$\\
$A_3$&$D_5$&3\,840&$B_5$&$H^\circ$\\
\addlinespace
$A_1+A_3$&\zwidth{$A_1+A_3$}&96&$A_1+B_3$&$H^\circ$\\
$2A_2$&$2A_2$&144&$2I_2(6)$&\color{Red}$I_2(6)\wr\Z_2$\\
$A_4$&$A_4$&120&$A_4$&\color{Red}$H^\circ\times\Z_2$\\
$2A_1+A_2$&$A_3$&96&$A_1+B_3$&$H^\circ$\\
$D_4$&$D_4$&1\,152&$F_4$&$H^\circ$\\
$4A_1$&$4A_1$&384&$B_4$&$H^\circ$\\
\addlinespace
$A_1+A_4$&$A_2$&6&$A_2$&\color{Red}$H^\circ\times\Z_2$\\
$A_2+A_3$&$2A_1$&16&$A_1+B_2$&$H^\circ$\\
$A_5$&\zwidth{$A_1+A_2$}&24&$A_1+I_2(6)$&$H^\circ$\\
$D_5$&$A_3$&48&$B_3$&$H^\circ$\\
$A_1+D_4$&$3A_1$&48&$B_3$&$H^\circ$\\
$2A_1+A_3$&$2A_1$&16&$A_1+B_2$&$H^\circ$\\
$A_1+2A_2$&$A_2$&24&$A_1+I_2(6)$&$H^\circ$\\
$3A_1+A_2$&$A_1$&24&$A_1+I_2(6)$&$H^\circ$\\
\addlinespace
$A_1+A_5$&$A_1$&4&$2A_1$&$H^\circ$\\
$A_2+A_4$&$A_1$&4&$2A_1$&$H^\circ$\\
$A_6$&$A_1$&4&$2A_1$&$H^\circ$\\
$2A_1+A_4$&\textbf1&4&$2A_1$&$H^\circ$\\
$E_6$&$A_2$&12&$I_2(6)$&$H^\circ$\\
$A_1+D_5$&$A_1$&4&$2A_1$&$H^\circ$\\
$2A_3$&\textbf1&8&$B_2$&$H^\circ$\\
$D_6$&$2A_1$&8&$B_2$&$H^\circ$\\
$A_2+D_4$&\textbf1&12&$I_2(6)$&$H^\circ$\\
$A_1+A_2+A_3$&$A_1$&4&$2A_1$&$H^\circ$\\
$2A_1+2A_2$&\textbf1&8&$B_2$&$H^\circ$\\
\addlinespace
$A_1+A_6$&\textbf1&2&$A_1$&$H^\circ$\\
$A_1+A_2+A_4$&\textbf1&2&$A_1$&$H^\circ$\\
$A_1+E_6$&\textbf1&2&$A_1$&$H^\circ$\\
$A_2+D_5$&\textbf1&2&$A_1$&$H^\circ$\\
$A_3+A_4$&\textbf1&2&$A_1$&$H^\circ$\\
$A_7$&\textbf1&2&$A_1$&$H^\circ$\\
$D_7$&\textbf1&2&$A_1$&$H^\circ$\\
$E_7$&$A_1$&2&$A_1$&$H^\circ$\\
\bottomrule
\end{tabular}
\end{minipage}
\end{table}

\clearpage
\section*{Acknowledgements}

This work is based on a portion of the first author's Ph.\,D.~thesis, which
was supervised by the second author at the University of Sydney. We thank 
an anonymous referee for several useful comments.

\bibliography{ParabolicComplements}
\bibliographystyle{abbrv}

\end{document}